\documentclass[notitlepage,11pt,reqno]{amsart}
\usepackage[foot]{amsaddr}
\usepackage{amssymb,nicefrac,bm,upgreek,mathtools,verbatim}
\usepackage[final]{hyperref}
\usepackage[mathscr]{eucal}
\usepackage{dsfont}
\usepackage[normalem]{ulem}
\usepackage{amsopn,esint}
\usepackage[T1]{fontenc}

\usepackage[utf8]{inputenc}
\usepackage{apptools}
\AtAppendix{\counterwithin{lemma}{section}} 

\usepackage[margin=1in]{geometry}
\raggedbottom

\newcommand{\stkout}[1]{\ifmmode\text{\sout{\ensuremath{#1}}}\else\sout{#1}\fi}

\newtheorem{lemma}{Lemma}[section]
\newtheorem{theorem}{Theorem}[section]

\theoremstyle{definition}
\newtheorem{definition}{Definition}[section]

\newtheorem{remark}{Remark}[section]
\numberwithin{theorem}{section}
\numberwithin{equation}{section}

\hypersetup{
  colorlinks=true,
  citecolor=dgreen,
  linkcolor=mblue,
  urlcolor = blue,
  anchorcolor = blue,
  frenchlinks=false,
  pdfborder={0 0 0},
  naturalnames=false,
  hypertexnames=false,
  breaklinks}
\usepackage[capitalize,nameinlink]{cleveref}

\usepackage[abbrev,msc-links,nobysame,citation-order]{amsrefs}

\crefname{section}{Section}{Sections}
\crefname{subsection}{Section}{Sections}
\crefname{condition}{Condition}{Conditions}
\crefname{hypothesis}{Hypothesis}{Conditions}
\crefname{assumption}{Assumption}{Assumptions}
\crefname{lemma}{Lemma}{Lemmas}
\crefname{fact}{Fact}{Facts}

\Crefname{figure}{Figure}{Figures}

\crefformat{equation}{\textup{#2(#1)#3}}
\crefrangeformat{equation}{\textup{#3(#1)#4--#5(#2)#6}}
\crefmultiformat{equation}{\textup{#2(#1)#3}}{ and \textup{#2(#1)#3}}
{, \textup{#2(#1)#3}}{, and \textup{#2(#1)#3}}
\crefrangemultiformat{equation}{\textup{#3(#1)#4--#5(#2)#6}}%
{ and \textup{#3(#1)#4--#5(#2)#6}}{, \textup{#3(#1)#4--#5(#2)#6}}%
{, and \textup{#3(#1)#4--#5(#2)#6}}

\Crefformat{equation}{#2Equation~\textup{(#1)}#3}
\Crefrangeformat{equation}{Equations~\textup{#3(#1)#4--#5(#2)#6}}
\Crefmultiformat{equation}{Equations~\textup{#2(#1)#3}}{ and \textup{#2(#1)#3}}
{, \textup{#2(#1)#3}}{, and \textup{#2(#1)#3}}
\Crefrangemultiformat{equation}{Equations~\textup{#3(#1)#4--#5(#2)#6}}%
{ and \textup{#3(#1)#4--#5(#2)#6}}{, \textup{#3(#1)#4--#5(#2)#6}}%
{, and \textup{#3(#1)#4--#5(#2)#6}}

\crefdefaultlabelformat{#2\textup{#1}#3}
%

\newcommand{\vertiii}[1]{{\left\vert\kern-0.25ex\left\vert\kern-0.25ex\left\vert #1
    \right\vert\kern-0.25ex\right\vert\kern-0.25ex\right\vert}}





\newcommand{\cH}{{\mathcal{H}}}  

\newcommand{\cK}{{\mathcal{K}}}  
  %


\newcommand{\cM}{{\mathcal{M}}}

\newcommand{\sD}{\mathscr{D}}

\newcommand{\RR}{\mathds{R}}
\newcommand{\NN}{\mathds{N}}

\newcommand{\Rn}{{\mathds{R}^{n}}}

\newcommand{\D}{\mathrm{d}}


\DeclareMathOperator*{\supp}{support}

\newcommand{\grad}{\nabla}


\usepackage{color}
\definecolor{dmagenta}{rgb}{.4,.1,.4}
\definecolor{dblue}{rgb}{.0,.0,.5}
\definecolor{mblue}{rgb}{.0,.4,.7}
\definecolor{ddblue}{rgb}{.0,.0,.4}
\definecolor{dred}{rgb}{.9,.0,.0}
\definecolor{dgreen}{rgb}{.0,.5,.0}
\definecolor{Eeom}{rgb}{.0,.0,.5}
\definecolor{dbrown}{rgb}{.6,.0,.0}


\allowdisplaybreaks

\newcommand{\ttl}{\Large Existence of global entropy solution for Eulerian droplet models and two-phase flow model with non-constant air velocity}
\begin{document}
\title[Eulerian droplet model and two-phase flow model]
{\ttl}

\author{Abhrojyoti Sen$^1$}
\address{$^1$Department of Mathematics, Indian Institute of Science Education and Research, Dr. Homi Bhabha Road, Pune 411008, India. Email: abhrojyoti.sen@acads.iiserpune.ac.in.}

\author{Anupam Sen$^2$}
\address{$^2$Centre for Applicable Mathematics, Tata Institute of Fundamental Research, Post Bag No 6503, Sharadanagar, Bangalore - 560065, India. Email: anupam21@tifrbng.res.in}



\begin{abstract}
This article addresses the question concerning the existence of global entropy solution for generalized Eulerian droplet models with air velocity depending on both space and time variables. When $f(u)=u,$ $\kappa(t)=const.$ and $u_a(x,t)=const.$ in \eqref{system}, the study of the Riemann problem has been carried out by  Keita and Bourgault \cite{sanajmaa} $\&$ Zhang \textit{et al.} \cite{zhangq}. We show the global existence of the entropy solution to \eqref{system} for any strictly increasing function $f(\cdot)$ and $u_a(x,t)$ depending only on time with mild regularity assumptions on the initial data via \textit{shadow wave tracking} approach. This represents a  significant improvement over the findings of Yang \cite{yangjde}. Next, by using the \textit{generalized variational principle,} we prove the existence of an explicit entropy solution to \eqref{system} with $f(u)=u,$ for all time $t>0$ and initial mass $v_0>0,$ where $u_a(x,t)$ depends on both space and time variables,  and also has an algebraic decay in the time variable. This improves the results of many authors such as Ha \textit{et al.} \cite{hajde}, Cheng and Yang \cite{chengaml} $\&$ Ding and Wang \cite{ding-huang} in various ways. Furthermore, by employing the shadow wave tracking procedure, we discuss the existence of global entropy solution to the generalized two-phase flow model with time-dependent air velocity that extends the recent results of Shen and Sun \cite{shenjde}.
\end{abstract}
\keywords{Eulerian droplet model, pressureless gas dynamics system, two-phase flow model, shadow wave tracking, non-constant air velocity, entropy solution, generalized variational principle}
\subjclass[2020]{Primary: 35L65, 35L67, 76N15, 35Q35}

\maketitle
\tableofcontents

\section{Introduction and main results}
\subsection{Overview} In this article, we consider the $1$D Eulerian droplet model

\begin{align}\label{system} 
\begin{aligned}
v_{t}+(vf(u))_{x}&=0,\,\,\, &x\in \RR, t>0,\\
(vu)_{t}+(vuf(u))_{x}&=\kappa(t)(u_{a}(x,t)-u)v, \,\,\, &x\in \RR, t>0,
\end{aligned}
\end{align}

adjoined with the initial data 
\begin{equation}\label{initial data}
(v,u)(x,0)=(v_0(x),u_0(x)), \,\,\, \,\,\, x\in \RR.
\end{equation}
The precise conditions on the initial data will be specified later depending on different situations. Here $v, u$ in \eqref{system} denotes the volume friction and the velocity of the droplets, respectively. Moreover, we take the following assumption on $f$ in \eqref{system}:\\

\,\,\,\,\,\,\,\,\,\,\,\,\,\,\,\,\,\,\,\,\,\,\,\,\,\,\,\,\,\,\,\,\,\,\,\,\,\,\,\,\,\,\,\,\,\,\,\,\,\,\,\,\,\,$f: \RR \to \RR $ is a $C^1$ strictly increasing function.\\

Furthermore, $u_a(x,t)$ is a locally bounded function that indicates the velocity of the carrier fluid or air, which depends on the position of the particle and time, and $\kappa(t) \in L^{\infty}([0, T); \RR)$  is the drag coefficients between the carrier fluid and the droplets. The above system \eqref{system} can be derived from a more general system
\begin{align}\label{system with garvity}
\begin{aligned}
v_{t}+(vf(u))_{x}&=0,\\
(vu)_{t}+(vuf(u))_{x}&=\kappa(t)(u_{a}(x,t)-u)v+\left(1-\frac{\rho}{\rho_l}\right)\frac{1}{Fr^2}\textbf{g},
\end{aligned}
\end{align}
by neglecting the source term involving the \textit{gravitational force} \textbf{g}. The system \eqref{system with garvity}, for $f(u)=u,$ $\kappa(t)=\frac{C_D R e_d}{24K}$ where $K=\frac{\rho_l d^2 U_{\infty}}{18L \mu}$ is an inertia parameter, $Fr=\frac{U_{\infty}}{\sqrt{Lg_0}}$ is the Froude number, $U_{\infty}$ is the speed of air at infinity; $g_0$ is a characteristic external field; and $L$ is a characteristic length, and $u_a(x,t)= \bf{u_a}$ was introduced by Bourgault \textit{et al.} \cite{YB99}. For a detailed physical description of \eqref{system with garvity}, applications, and numerical experiments, see \cite{YB99, Keitathesis, sanajmaa}.

In order to understand the system \eqref{system} from a more analytical point of view, we consider different cases depending on the function $f(\cdot),$ the air velocity $u_a(x,t)$ and the drag coefficient $\kappa(t)$ as follows:

\noindent\textbf{Case I:} $f(u)=u.$ This case can be split into two subcases depending on the contributions from the drag coefficient $\kappa(t).$

\noindent\textbf{Subcase I.} $\kappa(t)=0.$ In this case the system \eqref{system} turns out to be the usual system of pressureless gas dynamics and the initial value problem has been extensively studied in the last few decades. As it is well known now, among others, one of the main issues is that, $v$ is no longer a function, but a measure. So the natural space where one should search for a weak solution to \eqref{system} is the space of Radon measures. The existing results consist of different notions of weak solutions, for example, measure-valued solutions \cite{bouchut1}, duality solutions, and solutions via vanishing viscosity approach \cite{bouchut-james, BJ98, boudin}.  The global existence of weak solutions via mass and momentum potentials was established in \cite{BG, DZ96}. An explicit formula using generalized potentials and variational principles was obtained in \cite{rykov, haung-wang, Wang-haung}. A new perspective to the global existence of weak solutions for $1$D pressureless gas dynamics equation is due to Natile and Sava\'{r}e \cite{NS2009} by constructing sticky particle solutions using a suitable metric projection onto the cone of monotone maps. Later, Cavalletti \textit{et al.} \cite{cavalletti} gave a more direct proof by using the notion of differentiability of metric projections introduced by Haraux. On the other hand, Nguyen and Tudorascu \cite{nguyen, NT08} gave a general global existence result for \eqref{system} with or without viscosity by constructing an entropy solution for appropriate scalar conservation laws. They also obtained the uniqueness of the solution via the contraction principle in the Wasserstein metric. Other uniqueness results are due to Wang and Ding \cite{wang-ding} $\&$ Haung and Wang \cite{haung-wang} where they used generalized characteristics introduced by Dafermos \cite{dafermos1}. On the contrary,  Bressan and Nguyen \cite{BN2014} showed the non-uniqueness and non-existence of solutions in the multi-D case by constructing different initial data. Regarding numerical methods employed to the pressureless gas model, we refer to \cite{mathiaud, bouchut-jin}.

\noindent\textbf{Subcase II.}  Now we consider $\kappa(t)\neq 0.$ The work of Ha \textit{et al.} \cite{hajde} is the first useful result for us to consider in this scenario. They took $\kappa(t)=1$ and $u_a(x,t)=0,$ i.e., the system \eqref{system} takes the form 
\begin{align}\label{flocking system}
\begin{aligned}
v_{t}+(vu)_{x}&=0,\\
(vu)_{t}+(vu^2)_{x}&=-uv.
\end{aligned}
\end{align}
The system \eqref{flocking system} is strongly related to the pressureless Euler system with flocking dissipation
\begin{align}\label{full flock system}
\begin{aligned}
    \partial_t v+\grad_x \cdot (vu)&=0,\,\,\,\, x\in \Rn, \,\, t>0,\\
    \partial_t (uv)+\grad_x \cdot (vu \otimes u)&=-Kv \int_{\Rn} \psi(|x-y|)(u(x)-u(y))v(y)dy,
    \end{aligned}
\end{align}
where $K$ is the positive coupling strength and $\psi$ is a Lipschitz continuous function that denotes the communication weight. The system \eqref{flocking system} can be obtained from \eqref{full flock system} by setting the following quantities:
\begin{align*}
    n=1, K\equiv1, \psi\equiv 1, \int_{-\infty}^{\infty}v dx=1, \,\,\, \text{and}\,\,\, \int_{-\infty}^{\infty}v u dx=0.
\end{align*}
To study the initial value problem for \eqref{flocking system}, the authors used a variational approach. Furthermore, they showed the uniqueness of the entropy solution by adopting the arguments of \cite{wang-ding} in their setting. As it is mentioned earlier, in the case when $\kappa(t)$ and $u_a(x,t)$ both are constants, the Riemann problem for \eqref{system} is studied by Bourgault and Keita in \cite{sanajmaa}. More recently, Cheng and Yang \cite{chengaml} studied the Riemann problem for the system
\begin{align}\label{AML system}
\begin{aligned}
    v_t + (vu)_x&=0,\\
    (vu)_t+(vu^2)_x&=(kx-\alpha u)v.
    \end{aligned}
\end{align}
The nonhomogeneous term in \eqref{AML system} can easily be obtained by setting $\kappa(t)=\alpha>0$ and $u_a(x,t)= k x/\alpha$ in \eqref{system}. For other related studies, we refer to \cite{shen, ding-huang} and the references cited therein where the authors considered the pressureless Euler system with a coulomb-like friction term $\beta v,$ $\beta>0$ and a source term of the form $vx,$ respectively. Recently, Leslie and Tan \cite{leslie2023} developed a global wellposedness theory and long-time behavior for weak solutions of the 1D Euler-alignment system (similar to the system \eqref{full flock system}) with measure-valued density, and bounded velocity which is an example of a nonlocal system \cite{kiselev2018}.

\noindent\textbf{Case II:} $f: \RR \to \RR$ be any function satisfying $f^{\prime}(u)>0.$ Similar to the above, we consider two subcases below.\\
\noindent\textbf{Subcase I.} $\kappa(t)=0.$ This case corresponds to the generalized pressureless Euler system. To the best of our knowledge, Yang \cite{yangjde} first considered the Riemann problem for the homogeneous version of \eqref{system}, where $f(u)$ is assumed to be a smooth and strictly monotone function. He used the characteristics method to obtain the Riemann solution and showed the existence of a non-classical measure-valued solution. In fact, he proved there are only two kinds of solution: one that involves vacuum and the other one contains a delta measure in the component $v.$ Furthermore, he proposed a generalized Rankine-Hugoniot relation for delta shock solutions to the system \eqref{system}. 

The next result on the homogeneous version of system \eqref{system} is due to Huang \cite{huang}, where he established the existence of a global weak solution with the initial data $v_0(x)\geq 0, u_0(x) \in L^{\infty}(\RR).$ When $f(u)\neq u,$ the key difference between this system and the usual pressureless gas dynamics is that, here one has to deal with two kinds of speeds: one is the characteristics speed $f(u)$ and the other one is the physical velocity $u.$ Due to this, several tools, including duality solutions, sticky particle approaches, and most importantly the generalized variational principle (GVP) are inapplicable in this situation. Huang \cite{huang} established the global weak solution by developing an approach that can be seen as a combination of the front-tracking method and generalized characteristics. First, the initial data is approximated by piecewise constant functions and a sequence of Riemann solution $(v_n, u_n)$ is obtained up to a finite time when the first interactions of waves occur. Then a set of Riemann problems with $\delta$-initial data are solved to continue the process. 

In \cite{darko}, Mitrovi\'{c} and Nedeljkov showed that the Riemann solutions to the generalized pressureless gas dynamics equation (which is a non-strictly hyperbolic system) can be obtained as a vanishing pressure limit of the strictly hyperbolic system
\begin{align}\label{stricly hyperbolic}
\begin{aligned}
    v_t+(v f(u))_x&=0,\\
    (vu)_t+(vu f(u)+\varepsilon p(v))_x&=0,
    \end{aligned}
\end{align}
where $\varepsilon>0$ is a small parameter and the pressure term $p$ is a non-negative $C^2$-function satisfying: $p^{\prime}\geq0$ and $p^{\prime\prime}>0.$ The system \eqref{stricly hyperbolic} is strictly hyperbolic \cite{bressan} and can be solved for arbitrary Riemann data. The distributional limit as $\varepsilon \to 0$ of the $BV$ solutions to the system \eqref{stricly hyperbolic} converges to the delta shock solution of  the homogeneous version of \eqref{system}.
\begin{remark}
In this context, it is important to note that when $f^{\prime}(u)$ changes sign, system \eqref{system} can be associated with the general system of Keyfitz-Kranzer \cite{KK1980} or Aw-Rascle type \cite{AW2000}. The global existence results to such systems have been obtained by Lu (see \cite{Lu2013, LuSIMA} and the references cited therein) using compensated compactness arguments for the homogeneous case, i.e., when $\kappa(t)=0.$
\end{remark}
\noindent \textbf{Subcase II.} $\kappa(t)\neq 0.$ For $f(u)\neq u,$ there are very few papers in the literature that considers this case. Recently, Zhang \textit{et al.} \cite{zhangq} studied the Riemann problem for the system
\begin{align}\label{recent one}
\begin{aligned}
    v_t+(v f(u))_x&=0,\\
    (vu)_t+(vu f(u))_x&=(\beta-\alpha u)v,
    \end{aligned}
\end{align}
where the constants $\alpha$ and $\beta$ denote the dissipation coefficient and the friction coefficient, respectively. Note that \eqref{recent one} can be derived from \eqref{system} by simply setting $u_a(x,t)=\beta/\alpha$ and $\kappa(t)=\alpha.$ Also, for $\alpha=0,$ the Riemann solutions are obtained by Zhang and Zhang in \cite{zhangcpaa}.

\subsection{Main results} In this section, we state our main results. Keeping the above literature in mind, we ask the following question:

\textbf{Q.} \textit{Depending on the function $f$ and the source term, what are the possible cases for which the system \eqref{system}-\eqref{initial data} admits a global entropy solution?}

Our answer is two-fold:
\begin{itemize}
\item When $f$ is any $C^1$ strictly increasing function, we establish the existence of global entropy solution for the following system
\begin{align}\label{system dependent on t} 
\begin{aligned}
v_{t}+(vf(u))_{x}&=0,\\
(vu)_{t}+(vuf(u))_{x}&=\kappa(t)(u_{a}(t)-u)v.
\end{aligned}
\end{align}
To achieve our objective, we utilize shadow wave tracking method \cite{RN21}. Note that here the drag coefficient $\kappa$ and the air velocity $u_a$ are locally bounded functions of $t.$

\vspace{.3cm}
\item When $f(u)=u, $ using the generalized variational principle, we obtain an explicit representation of the entropy solution for the system
\begin{align}\label{source term depending on x and t} 
\begin{aligned}
v_{t}+(vu)_{x}&=0,\\
(vu)_{t}+(vu^2)_{x}&=\frac{1}{t+\upkappa}\left(\frac{x}{t+\upkappa}-u\right)v.
\end{aligned}
\end{align}
We derive the above system \eqref{source term depending on x and t} by setting  $\kappa(t)=\frac{1}{t+\upkappa}$ and $u_a(x,t)=\frac{x}{t+\upkappa}$ in \eqref{system} where $\upkappa \in \RR^+.$ Note that, here the air velocity $u_a(x,t)$ depends both on the time and space variables. Also, $\kappa(t)$ is a function that decays algebraically in time.
\end{itemize}

\vspace{.5cm}
The first part of the article is devoted to the results related to the system \eqref{system dependent on t}. We start with the concept of shadow waves \cite{marko1, marko2, marko3}. Shadow waves (in short SDW) are constructed as a net of piecewise constant (more precisely, piecewise constant for each time $t$) functions that approximate delta shocks in a small neighborhood of the shock location. Let a delta shock is supported by a curve $x=c(t)$ with speed $c^{\prime}(t).$ We perturb the curve from both sides by a small parameter $\varepsilon>0,$ and replace the  delta shock with a fan of shocks that depend on $\varepsilon.$ Next we give a formal definition of shadow wave solution.
\begin{definition}\label{first Defn}
    A shadow wave is a piecewise constant (for each time $t$) function of the form
    \begin{align}\label{intro-def-SDW}
        U^{\varepsilon}(x,t)= \begin{cases}
            (v_l(t), u_l(t)), \,\,\,\,\, x< c(t)-a_{\varepsilon}(t)-x_{l, \varepsilon},\\
            (v_{l, \varepsilon}(t), u_{l,\varepsilon}(t)),\,\,\,\,\, c(t)-a_{\varepsilon}(t)-x_{l, \varepsilon}<x<c(t),\\
            (v_{r, \varepsilon}(t), u_{r,\varepsilon}(t)),\,\,\,\,\, c(t)<x<c(t)+b_{\varepsilon}(t)+x_{r, \varepsilon},\\
            (v_r(t), u_r(t)), \,\,\,\,\, x>c(t)+b_{\varepsilon}(t)+x_{r, \varepsilon},
        \end{cases}
    \end{align}
    where $a_{\varepsilon}(t), b_{\varepsilon}(t), x_{l, \varepsilon}, x_{r, \varepsilon}$ are $\mathcal{O}(\varepsilon)$ for each $t>0.$ We say that the SDW \eqref{intro-def-SDW} solves the system \eqref{system dependent on t} if its substitution in the RHS and LHS of \eqref{system dependent on t} gives the same limit as $\varepsilon \to 0 $ in the sense of distributions.
\end{definition}
The idea of replacing delta (or singular) shocks with a fan of shocks reminds us of the method of front tracking for conservation laws (see \cite{bressan, bressan1, dafermos, dafermos1, holden, risebro, B1995} ). As a first step, the initial data \eqref{initial data} is approximated by piecewise constant functions and finitely many Riemann problems are solved at the initial level $t=0.$ The solution can be continued until $t=t_1,$ when the first interaction of waves occurs. Since the interaction of two waves produces only a single delta wave, the number of shock fronts decreases in time. At the time level $t=t_1,$ one needs to solve a finite number (less than the initial case) of Riemann problems with delta initial data and the process can be continued further.

In \cite{huang}, Huang started with a similar method but later on, he defined generalized characteristics and mass-momentum-energy potentials by using the approximate solution to produce a complete solution. We take a different route of using shadow wave solution at each stage of interaction (including the initial stage where no interaction happens) and obtain a complete solution in an approximated sense. One of the advantages of this approach is that it can be implemented to study $3 \times 3$ systems (see \cref{sec2}) of having unbounded solutions whereas it seems that Huang's method is restrictive in such cases. However, the solution constructed by Huang can be seen as an actual solution that satisfies the weak formulation.

\vspace{.5cm}
Next, we present the global existence result for the system \eqref{system dependent on t}.
\begin{theorem}\label{TH1.1}
     Let $v(x)\in L^{\infty}([R, \infty))$ be positive, $u(x) \in L^{\infty}([R, \infty))\cap C([R, \infty))$ and $u(x)$ be a  function having finitely many extremes. Take a partition $\{Y_i\}_{i\in \NN \cup \{0\}}$ of $[R, \infty)$ such that $Y_0=R$ and $ C_1 \varepsilon^{\alpha}<Y_i-Y_{i+1}<C_2 \rho(\varepsilon)$ for every $i=0,1,2,\cdots\cdots$ where $C_1, C_2\geq 1, \alpha\in (0, 1)$ and $\rho(\varepsilon) \to 0$ as $\varepsilon \to 0.$ Then there exists a global admissible solution to \eqref{system dependent on t} and \eqref{initial data approx}. More precisely, there exists a function $U^{\varepsilon}=(v^{\varepsilon}, u^{\varepsilon})$ that satisfies
     \begin{align*}
         \lim_{\varepsilon \to 0}\begin{cases}
             \left\langle \frac{\partial}{\partial t}v^{\varepsilon},  \varphi\right\rangle + \left\langle \frac{\partial}{\partial x}(v^{\varepsilon}f(u^{\varepsilon})),  \varphi\right\rangle=0,\\
             \left\langle \frac{\partial}{\partial t}(v^{\varepsilon} u^{\varepsilon}),  \varphi\right\rangle + \left\langle \frac{\partial}{\partial x}(v^{\varepsilon} u^{\varepsilon} f(u^{\varepsilon})),  \varphi\right\rangle=\left\langle \kappa(t)(u_{a}(t)-u^{\varepsilon})v^{\varepsilon}, \varphi \right\rangle,
         \end{cases}
     \end{align*}
     for every test function $\varphi \in C^{\infty}_c\left(\RR \times [0, \infty)\right)$ and the admissibility condition.
 \end{theorem}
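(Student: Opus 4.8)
The plan is to construct $U^{\varepsilon}=(v^{\varepsilon},u^{\varepsilon})$ by a shadow-wave front-tracking scheme on $[R,\infty)$ and to verify the two displayed identities, together with overcompressiveness of every singular front, in the limit $\varepsilon\to0$. Fix $\varepsilon>0$ and replace the data $(v,u)$ by the piecewise constant function equal to the cell value (or cell average) of $(v,u)$ on each $[Y_i,Y_{i+1})$; this is the datum \eqref{initial data approx}, which converges to $(v,u)$ because $|Y_i-Y_{i+1}|<C_2\rho(\varepsilon)\to0$. Writing \eqref{system dependent on t} in the quasilinear form $u_t+f(u)u_x=\kappa(t)(u_a(t)-u)$ and $v_t+(f(u)v)_x=0$, the characteristic speed is the double value $f(u)$; hence across an interface with traces $u_-,u_+$ the Riemann solution is a vacuum (or pure $v$-contact) configuration when $u_-\le u_+$, and, when $u_->u_+$ --- so $f(u_-)>f(u_+)$ and characteristics collide --- an overcompressive delta shock, which I resolve by a shadow wave of the form \eqref{intro-def-SDW} centred on its support curve $x=c(t)$.

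For a single SDW the next step is to insert \eqref{intro-def-SDW} into the weak formulation of \eqref{system dependent on t}, expand every term against $\varphi\in C^{\infty}_c$, and retain the $\mathcal{O}(1)$ part as $\varepsilon\to0$. Since $v^{\varepsilon}$ is of order $\varepsilon^{-1}$ on the two $\mathcal{O}(\varepsilon)$-wide intermediate layers, the source $\kappa(u_a-u^{\varepsilon})v^{\varepsilon}$ contributes a genuine term concentrated on $x=c(t)$, and the matching yields a generalized Rankine--Hugoniot system: an ODE for $c(t)$, an ODE for the weight $\xi(t)$ of the emerging delta, coupled through the source, together with algebraic relations fixing $u_{l,\varepsilon},u_{r,\varepsilon}$ in terms of the outer traces $u_l(t),u_r(t)$, the latter solving the linear drag ODE $\dot u=\kappa(t)(u_a(t)-u)$ along the bounding characteristics. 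As $\kappa,u_a\in L^{\infty}_{\mathrm{loc}}$, these ODEs are of Carath\'eodory type, so Carath\'eodory's existence theorem gives absolutely continuous local solutions; the drag ODE is linear, hence globally solvable in closed form, order preserving --- so $u_l(t)>u_r(t)$ persists once true --- and subject to a maximum principle that keeps $\|u^{\varepsilon}\|_{L^{\infty}}$ bounded uniformly in $\varepsilon$ on compact time intervals. A direct computation then shows the admissibility inequality $f(u_r(t))\le c'(t)\le f(u_l(t))$ is preserved, the source being a lower-order perturbation, so each SDW is admissible until it first meets a neighbouring front.

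It remains to treat interactions and iterate to all $t>0$. As recalled in the introduction, the collision of two adjacent waves produces only a single delta wave, hence a single new SDW, which I would confirm by the same substitution with delta-type Riemann data; consequently the number of fronts is non-increasing and strictly decreases at every genuine interaction. Because $u_0$ has finitely many extremes, on each of the finitely many monotone pieces the initial fronts are of a single type, and the mesh lower bound $|Y_i-Y_{i+1}|>C_1\varepsilon^{\alpha}$ with $\alpha\in(0,1)$ simultaneously keeps the $\mathcal{O}(\varepsilon)$ layers disjoint at $t=0$ and forces a uniform positive gap before --- and between --- interactions in any bounded space--time region, so the scheme can be continued for all $t>0$. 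Passing $\varepsilon\to0$ term by term in the two weak identities, using the uniform bounds on $u^{\varepsilon}$ and on the mass in bounded regions, gives the asserted limits for every $\varphi$, and the overcompressiveness of each SDW is retained in the limit by construction.

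I expect the main obstacle to be the interaction analysis together with the global continuation: showing that every pairwise collision yields a single admissible SDW even in the degenerate regime where, for small $\varepsilon$, a number of order $\varepsilon^{-\alpha}$ of nascent delta shocks nucleate on a decreasing piece of $u_0$ and merge almost simultaneously, and that the resulting interaction times cannot accumulate at a finite time, so that $U^{\varepsilon}$ is truly defined on all of $[0,\infty)$. A secondary but unavoidable point is the careful accounting of the $\mathcal{O}(\varepsilon)$ quantities $a_{\varepsilon},b_{\varepsilon},x_{l,\varepsilon},x_{r,\varepsilon}$, so that the remainder in each weak identity is $o(1)$ uniformly on the support of $\varphi$, despite $\kappa(t)(u_a(t)-u)v$ being only $L^{\infty}$ in $t$.
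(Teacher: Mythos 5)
Your overall strategy is the same as the paper's: approximate the data by piecewise constants on the mesh $\{Y_i\}$, solve the Riemann problems by shadow waves (delta shock when $u_->u_+$, contact discontinuities plus vacuum otherwise), derive the generalized Rankine--Hugoniot ODEs for $c(t)$, $\xi(t)$, $\chi(t)$ by substituting the SDW ansatz into the weak formulation, check overcompressibility, and iterate through interactions. The Riemann and interaction lemmas you sketch are exactly the paper's Lemmas in Section 2, and the case analysis by monotone pieces of $u$ is how the paper organizes Lemmas 4.2 and 4.3.

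The genuine gap is in the final limit passage, which you dispatch with ``passing $\varepsilon\to0$ term by term'' and then, in your last paragraph, essentially re-open as an unresolved obstacle. The decisive point --- and the reason the hypothesis $Y_i-Y_{i+1}>C_1\varepsilon^{\alpha}$ with $\alpha\in(0,1)$ is there at all --- is a quantitative error budget: on each time slab between consecutive interactions the substitution of the SDW into the weak formulation leaves a residual of size $\mathcal{O}(\varepsilon)$ (this is what the Taylor expansions of $\varphi$ about $x=c(t)$ give), and the lower mesh bound guarantees that the number of fronts, hence of interactions, meeting the compact support of $\varphi$ is at most of order $C_\varphi/\varepsilon^{\alpha}$. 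Summing, the total residual is $\mathcal{O}(\varepsilon^{1-\alpha})\to 0$ precisely because $\alpha<1$. You invoke the lower mesh bound only to keep the $\mathcal{O}(\varepsilon)$ layers disjoint and to produce ``a uniform positive gap between interactions,'' which is not its role and does not by itself control the accumulated error; nor do you need non-accumulation of interaction times in all of $[0,\infty)$, since compactness of $\operatorname{supp}\varphi$ together with the interaction count already terminates the procedure after finitely many steps. Your worry about order-$\varepsilon^{-\alpha}$ delta shocks merging almost simultaneously on a decreasing piece of $u$ is resolved the same way: each merger is again a Riemann problem with $\delta$-initial data contributing $\mathcal{O}(\varepsilon)$, and only the total count matters. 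Without this accounting the proof is incomplete, because the per-step errors could a priori sum to something of order one.
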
 

 \cref{TH1.1} can be extended to a $3\times 3$ system of pressureless drift-flux equations of two-phase flow model 
 \begin{align}\label{s7eq1} 
\begin{aligned}
v_{t}+(vf(u))_{x}&=0,\\
w_{t}+(wf(u))_{x}&=0,\\
((v+w)u)_{t}+((v+w)uf(u))_{x}&=\kappa(t)(u_{a}(t)-u)(v+w),
\end{aligned}
\end{align} in which $v$ and $w$ represent the masses of gas and liquid, respectively. For more on the drift-flux model, we refer to \cite{huangtwo, evje1, evje2}. Recently, considering $f(u)=u$ and the source term to be $-\mu (v+w),$  Shen and Sun \cite{shenjde} studied the Riemann problem for \eqref{s7eq1} and showed the existence of delta shock wave invoking the vanishing pressure limit approach. We can prove an analogous result of \cref{TH1.1} for \eqref{s7eq1} as our method only requires the existence of a unique solution to the Riemann problem and the interactions consisting of shadow waves or elementary waves.

 Next, we prove that a sequence of solutions constructed in \cref{TH1.1} has a weak limit in the space of Radon measures.
\begin{theorem}\label{TH1.2}
	Grant the assumptions of \cref{TH1.1} on the initial data \eqref{initial data approx}. Take a partition $\{Y^{\nu}_i\}_{i\in \NN \cup \{0\}}$ of $[R, \infty)$ such that $Y_0=R, C_1 \varepsilon_{\nu}^{\alpha}<Y_i-Y_{i+1}<C_2 \rho(\varepsilon_{\nu})$ for every $i=0,1,2,\cdot\cdots$ where $C_1, C_2\geq 1, \alpha\in (0, 1)$ and $\rho(\varepsilon_{\nu}) \to 0$ as $\varepsilon_{\nu} \to 0$ for any sequence $\{\varepsilon_{\nu}\}_{\nu\in \NN\cup \{0\} }.$ Let $\{U^\nu\}_{\nu \in \NN\cup \{0\}}$ be a sequence of approximated solution obtained in \cref{TH1.1}. Then there exists a subsequence still denoted as $\{U^{\nu}\}_{\nu\in \NN\cup \{0\}}$ and a Radon measure $U^*$ such that $U^{\nu} \stackrel{\ast}{\rightharpoonup} U^*$ as $\nu \to \infty.$
\end{theorem}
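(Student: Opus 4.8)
The plan is to obtain \cref{TH1.2} from weak-$\ast$ sequential compactness of sets of Radon measures that are bounded uniformly on compacts, the only real work being two a priori estimates on the approximate solutions $U^\nu=(v^\nu,u^\nu)$ produced by \cref{TH1.1}. Since the underlying space $[R,\infty)\times[0,\infty)$ is locally compact and second countable, it suffices to show that for every compact $K\subset[R,\infty)\times[0,\infty)$ there is a constant $M_K$, independent of $\nu$, with $\abs{U^\nu}(K)\le M_K$; a diagonal extraction along an exhaustion $K_1\subset K_2\subset\cdots$ of the domain by compacts then produces a subsequence, still written $\{U^\nu\}$, and a Radon measure $U^\ast$ with $U^\nu\stackrel{\ast}{\rightharpoonup}U^\ast$.

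The first estimate is the uniform boundedness of the velocity. On a fixed slab $[0,T]$ the construction in \cref{TH1.1} keeps $u^\nu$ inside a fixed bounded interval: along each (generalized) characteristic the velocity solves $\dot u=\kappa(t)(u_a(t)-u)$, whose solutions stay in the convex hull of $\{u_0(x):x\ge R\}\cup\{u_a(s):0\le s\le T\}$ by a Grönwall estimate; at an interaction the outgoing velocity is a convex combination of the incoming ones through the Rankine--Hugoniot relations, and the shadow-wave intermediate velocities remain comparable to the (bounded) delta-shock speed. Hence $\norm{u^\nu}_{L^\infty([R,\infty)\times[0,T])}\le C(T)$, with $C(T)$ depending only on $\norm{u_0}_{L^\infty}$, $\norm{\kappa}_{L^\infty([0,T])}$ and $\sup_{[0,T]}\abs{u_a}$. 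In particular the wave speeds $f(u^\nu)$ are bounded by $\Lambda(T)\df\sup_{\abs s\le C(T)}\abs{f(s)}$ on $[0,T]$, which gives finite speed of propagation for the $U^\nu$.

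The second, and main, estimate is the uniform bound on the total $v$-mass. The first equation of \eqref{system dependent on t} is a conservation law for $v$ with flux $vf(u)$, and the source term appears only in the momentum equation, so the construction in \cref{TH1.1} is $v$-mass preserving up to errors vanishing as $\varepsilon_\nu\to0$: the shadow-wave intermediate states, of size $\order(\varepsilon^{-1})$ but carrying only a finite mass that becomes in the limit the Dirac weight on the shock curve, together with the regular piecewise-constant part, account for exactly the initial mass on the relevant domain of dependence. Therefore, given a compact $K\subset\{t\le T\}$ and choosing $L$ so large that, by finite speed of propagation, $K$ is influenced only by the initial data on $[R,L]$,
\begin{align*}
\abs{v^\nu}(K)\ \le\ T\sup_{0\le t\le T}\bnorm{v^\nu(\cdot,t)}_{\cM([R,\,L+\Lambda(T)T])}\ \le\ M_K\ <\ \infty
\end{align*}
for all large $\nu$, where $M_K$ depends only on $T$ and on $\norm{v_0}_{L^1}$ over a $\Lambda(T)T$-enlargement of the $x$-projection of $K$; crucially this bound is blind to the number $N_\nu$ of initial Riemann problems (which tends to $\infty$, since $C_1\varepsilon_\nu^\alpha<Y^\nu_i-Y^\nu_{i+1}<C_2\rho(\varepsilon_\nu)$), seeing only the total mass they partition. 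For the remaining components, $\abs{v^\nu u^\nu}(K)\le\norm{u^\nu}_{L^\infty(\{t\le T\})}\abs{v^\nu}(K)\le C(T)M_K$ and $\abs{u^\nu}(K)\le C(T)\,\mathrm{Leb}(K)$, so $\{U^\nu\}$ is bounded in $\cM(K)$ for every compact $K$; the limit $U^\ast=(v^\ast,u^\ast)$ then has $u^\ast\in L^\infty$ (inheriting $C(T)$ on each slab) while $v^\ast$ may carry a singular part concentrated on the limiting shock curves.

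The step I expect to be the genuine obstacle is the uniform $v$-mass bound, i.e.\ verifying that the wave interactions up to time $T$ --- finitely many at each time, but fusing from $N_\nu\to\infty$ many initial fronts --- cannot make the accumulated Dirac weights blow up as $\nu\to\infty$. This rests on the conservation structure of the first equation together with the bookkeeping of \cref{TH1.1} that interactions merge fronts without creating mass, and on the partition hypothesis on $\{Y^\nu_i\}$ ensuring that the piecewise-constant initial data converge to $(v_0,u_0)$ with no mass escaping to infinity over finite time intervals. Once this is in hand, the remainder is a routine application of the Banach--Alaoglu/Riesz representation theorem together with Grönwall's inequality.
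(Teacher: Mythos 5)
Your proposal is correct and follows essentially the same route as the paper: a local uniform bound on $|u^\nu|$ coming from the explicit solution of the velocity ODE along characteristics together with overcompressibility (the paper's \cref{LM4.5}, Step 1), a local uniform mass bound on $v^\nu$ from the conservation structure of the first equation (\cref{LM4.5}, Step 2, which the paper justifies via the mass-conservation computation in its remark following \cref{IL}), and then weak-$\ast$ compactness of locally uniformly bounded Radon measures (\cref{LM4.4}, applied after a Jordan decomposition to handle the sign of $u^\nu$). The paper's treatment of the mass bound is terser than yours but rests on exactly the bookkeeping you identify.
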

In the second part of the paper, we obtain the explicit formula for \eqref{source term depending on x and t} and show that it satisfies the weak formulation (see \cref{intro-defn-weak formulation}). We use the method of generalized variational principle (GVP). The next paragraphs are dedicated to briefly discussing the method and stating this part's main result.

As mentioned earlier, Rykov \textit{et al.} \cite{rykov} introduced the generalized variational principle for pressureless gas dynamics equation by generalizing the variational principle due to Lax and Oleinik for scalar conservation laws, in particular for Burger's equation. Huang and Wang \cite{haung-wang} $\&$  Ding \textit{et al.} \cite{Wang-haung } extended the method of generalized potentials when the initial data $u_0$ is not continuous and $v_0 \geq 0$ is a Radon measure, respectively. In this setting, the solution concept is the following: we show that $(v,u)$ is actually a weak solution to the system \eqref{source term depending on x and t}. First, we construct locally bounded measurable functions $m(x,t)$ and $u(x,t)$ such that $m(x,t)$ is of locally $BV$ in $x$ for a.e $t.$ Therefore $m$ defines a Lebesgue-Stieltjes measure $dm$ and its derivative in the sense of distribution defines a Radon measure $v=m_x.$ These two objects are the same through the identification
\begin{align*}
    -\left\langle m, \varphi_x \right\rangle=-\int_{-\infty}^{\infty}\varphi_x m \D{x}=\int_{-\infty}^{\infty}\varphi m_x \D{x}=\int_{-\infty}^{\infty}\varphi \D{m}=\left\langle v, \varphi \right\rangle \,\,\,\, \text{for all}\,\,\,\, \varphi\in C^{\infty}_c(\RR).
\end{align*}
Furthermore, similar identification allows us to define
\begin{align*}
    \left\langle vu, \varphi\right\rangle=\int_{-\infty}^{\infty}\varphi u \D{m}.
\end{align*}
These identifications lead to the notion of generalized solution to \eqref{source term depending on x and t}. The first equation can be written in the distributional sense as
\begin{align*}
    0=\left\langle v, \varphi_t\right\rangle+ \left\langle vu ,\varphi_x\right\rangle =-\iint \varphi_{xt} m \D{x}\D{t}+\iint \phi_x u \D{m}\D{t}.
\end{align*}
Similarly, the second equation of \eqref{source term depending on x and t} can be written as
\begin{align*}
  0&=\left\langle vu, \varphi_t\right\rangle+ \left\langle vu^2 ,\varphi_x\right\rangle+\left\langle\frac{1}{t+\upkappa}\left(\frac{x}{t+\upkappa}-u\right)v, \varphi\right\rangle \\
  &=\iint u \varphi_t \D{m}\D{t}+\iint u^2 \phi_x \D{m}\D{t}+\iint \frac{1}{t+\upkappa}\left(\frac{x}{t+\upkappa}-u\right) \varphi \D{m}\D{t}.   
\end{align*}
Therefore the weak formulation to the system \eqref{source term depending on x and t} is the following:
\begin{definition}\label{intro-defn-weak formulation}
    The pair $(v, u)$ is said to be a generalized solution to the system \eqref{second system} if the following integral identities
\begin{align}
    &\iint \varphi_t m \D{x}\D{t}-\iint \varphi u \D{m}\D{t}=0,\label{weak formulation}\\
    &\iint \varphi_t u + \varphi_x u^2 +\frac{1}{(t+\upkappa)}\left(\frac{x}{t+\upkappa}-u\right)\varphi \D{m}\D{t}=0,\label{wf2}
\end{align} hold for all test functions $\varphi \in C^{\infty}_c(\RR\times\RR^+),$ where the distributional derivative $m_x$ defines the Radon measure $v.$ 
\end{definition}
The construction of the generalized solution $(m, u)$ is done in two levels. First, by introducing generalized potential $F(y, x,t)$ we construct $u$ and then we introduce the momentum and energy potentials $q(x,t)$ and $E(x,t),$ respectively and some auxiliary functionals $H(\cdot, x,t), I(\cdot, x,t), J(\cdot, x,t).$ Moreover, by establishing relations between the measures $\D{q}, \D{E},$ $\D{m},$ and $\D{J}$ we show that $q, E, m$ satisfy \cref{intro-defn-weak formulation}.

Now we state the main result of this part.
\begin{theorem}\label{TH1.3}
Let $v_0(x)>0, u_0(x)$ are locally bounded measurable functions, then the pair $(m,u)$ given by \eqref{exf1}-\eqref{exf} is a global weak solution to the system \eqref{source term depending on x and t}-\eqref{initial data} in the sense of \cref{intro-defn-weak formulation}.
\end{theorem}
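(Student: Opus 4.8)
The plan is to run the generalized variational principle (GVP) of Rykov \textit{et al.} \cite{rykov} and Huang--Wang \cite{haung-wang} directly on \eqref{source term depending on x and t}; relative to \cite{hajde, chengaml, ding-huang} the new ingredient is that $\kappa(t)=\tfrac1{t+\upkappa}$ and $u_a(x,t)=\tfrac{x}{t+\upkappa}$ are non-constant. First I would solve the characteristic system $\dot x=u$, $\dot u=\tfrac1{t+\upkappa}\bigl(\tfrac{x}{t+\upkappa}-u\bigr)$: with $s=t+\upkappa$ one sees that $u+\tfrac xs$ is conserved along trajectories, and integrating the remaining linear equation yields the explicit flow
\begin{equation*}
X(y,t)=\gamma(t)\,y+\beta(t)\,u_0(y),\qquad
\gamma(t)=\frac{(t+\upkappa)^2+\upkappa^2}{2\upkappa(t+\upkappa)},\qquad
\beta(t)=\frac{t(t+2\upkappa)}{2(t+\upkappa)},
\end{equation*}
with velocity $u=\dot\gamma(t)\,y+\dot\beta(t)\,u_0(y)$ along the characteristic issued from $(y,u_0(y))$, and $\gamma(0)=1$, $\beta(0)=0$, $\dot\gamma(0)=0$, $\dot\beta(0)=1$. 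The crucial structural fact is that $X(y,\cdot)$ is \emph{affine} in the datum $u_0(y)$, exactly as $y+t\,u_0(y)$ is for pressureless gas dynamics; equivalently, the substitution $\widetilde x=x/\gamma(t)$, $\widetilde\tau=\beta(t)/\gamma(t)$ (with $\widetilde\tau$ increasing from $0$ to $\upkappa$ as $t$ ranges over $[0,\infty)$) formally turns \eqref{source term depending on x and t} into the free pressureless system — which is why the GVP applies — but I would keep working in the original variables so as to read off the explicit formulas \eqref{exf1}--\eqref{exf}.

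Next I would construct $u$ and $m$ from a generalized potential. Put $m_0(y)=\int_0^y v_0(\eta)\,\D\eta$, continuous and strictly increasing since $v_0>0$, and set
\begin{equation*}
F(y,x,t)=\int_0^{y}\bigl(x-X(\eta,t)\bigr)\,\D m_0(\eta),
\end{equation*}
whose $y$-critical points are exactly the $y$ with $X(y,t)=x$; for each $(x,t)$ the function $y\mapsto F(y,x,t)$ attains its maximum (here $v_0>0$ and the growth of $u_0$ enter), and I would take $y_-(x,t)\le y_+(x,t)$ to be its smallest and largest maximizers. Then I would set $u(x,t)=\dot\gamma(t)\,y_\pm+\dot\beta(t)\,u_0(y_\pm)$ off the concentration set $\{y_-<y_+\}$ and let $m(x,t)$ be the nondecreasing right-continuous function with value $m_0\bigl(y_+(x,t)\bigr)$ and jump $m_0(y_+)-m_0(y_-)$ across each point of the concentration set — the total mass of particles that have collided and stuck there. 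One checks $m(\cdot,t)\in BV_{\mathrm{loc}}$ for a.e.\ $t$, so $v\df m_x$ is a locally finite Radon measure and $u\in L^{\infty}_{\mathrm{loc}}$; these are the functions \eqref{exf1}--\eqref{exf}. The consistency of this sticky-particle prescription — that a characteristic carrying a concentrated lump is again a characteristic of \eqref{source term depending on x and t}, issued now from the mass-weighted average of the merged data — is precisely what the affine-in-$u_0$ form of $X$ secures.

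Finally, to verify \cref{intro-defn-weak formulation} I would follow the scheme of \cite{haung-wang, Wang-haung}: introduce a momentum potential $q(x,t)$ and an energy potential $E(x,t)$ together with the one-parameter auxiliary functionals $H(\cdot,x,t)$, $I(\cdot,x,t)$, $J(\cdot,x,t)$ built from $F$ and $X$, prove the measure identities tying $\D q$, $\D E$, $\D m$, $\D J$ together (in particular $\D q=u\,\D m$, and an energy identity carrying the extra term generated by the source), and substitute these into \eqref{weak formulation}--\eqref{wf2}; integrating by parts in $x$ and $t$ (Fubini for the double integrals, and using that $\tfrac1{t+\upkappa}\bigl(\tfrac{x}{t+\upkappa}-u\bigr)$ is locally integrable against $\D m\,\D t$ thanks to the algebraic decay in $t$) reduces both identities to the measure relations just established, while the $t\downarrow0$ boundary terms reproduce \eqref{initial data} because $X(y,0)=y$, $\dot\beta(0)=1$, $\dot\gamma(0)=0$ give $m(\cdot,0)=m_0$ and $u(\cdot,0)=u_0$. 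The hard part will be this last step: establishing the identities among $\D q,\D E,\D m,\D J$ along the concentration curves, where $m$ jumps and $u$ is only defined through the extremal maximizers $y_\pm$, and checking that the energy balance there picks up exactly the source contribution; a secondary, more routine nuisance is keeping control of $\gamma,\beta,\dot\gamma,\dot\beta$ near $t=0$ and as $t\to\infty$ (where $\widetilde\tau\uparrow\upkappa$), since these enter both the existence of the maximizers of $F(\cdot,x,t)$ and the integrability of the source term.
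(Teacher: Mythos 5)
Your proposal follows essentially the same route as the paper: the same explicit affine-in-$u_0$ characteristic flow (your $\gamma,\beta$ match the coefficients in \eqref{potential} up to the sign/min-vs-max convention on the potential), the same definition of $(m,u)$ through the extremal minimizers of $F(\cdot,x,t)$, and the same verification scheme via the potentials $q$, $E$ and the auxiliary functionals $H$, $I$, $J$ with the measure identities $\D q=u\,\D m$, $\D E=\tfrac12u^2\D m$, $\D J=\tfrac1{t+\upkappa}\bigl(\tfrac{x}{t+\upkappa}-u\bigr)\D m$ feeding into \eqref{weak formulation}--\eqref{wf2}. You also correctly flag where the real work lies (the Radon--Nikodym identities across the concentration set), so the plan is sound and consistent with the paper's proof.
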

\begin{remark}
    We want to point out that the system \eqref{source term depending on x and t} can be associated with the system \eqref{full flock system} if we consider a more general communication weight $\psi(|x-y|,t)$ which is of the form $\kappa(t):=1/(t+\upkappa).$ Also, if we take further assumptions $\int_{-\infty}^{\infty}v(y)\D{y}=1$ and $\int_{-\infty}^{\infty}v(y)u(y)\D{y}=1,$ the nonhomogeneous term in \eqref{full flock system} would take a form $\frac{1}{t+\upkappa} \left(1-u\right)v.$ The source term considered in \eqref{source term depending on x and t} is even more general involving the space variable. Note that, in the situation described above the communication weight is a decaying function of time. Therefore, \cref{TH1.3} essentially gives an answer to the question of Ha \textit{et al.} \cite[Section 7]{hajde} where they made a query: \textit{whether the generalized variational principle would apply for non-constant communication weights, for instance, algebraically decaying communication weights.}
\end{remark}
\subsection{Plan of the paper} The article is organized into two separate parts. The first part of the article consists of \cref{sec2}, \cref{sec3}, \cref{sec4} and \cref{sec5}. In \cref{sec2}, we study the Riemann problem and the interactions for \eqref{system dependent on t} and \eqref{s7eq1}. In \cref{sec3}, using the entropy-entropy flux pair, we introduce the notion of dissipative shadow waves for the system \eqref{system dependent on t} and show its equivalence to the overcompressibility condition. \cref{sec4} is devoted to prove \cref{TH1.1} and \cref{TH1.2}. In \cref{sec5}, we provide some examples of physically relevant models that are included in \eqref{system dependent on t}. The second part of the paper consists of \cref{sec6} where we give the proof of \cref{TH1.3}.

\vspace{1cm}
\noindent\textbf{Part I: Global existence results for \eqref{system dependent on t} and \eqref{s7eq1}.}

\section{Riemann problem and interactions}\label{sec2} 
In this section, we study the shadow wave solution for the Riemann type initial data and initial data containing $\delta$-measure for the systems \eqref{system dependent on t} and \eqref{s7eq1}. We start with the system \eqref{system dependent on t}.\\

\noindent{\bf Notation.} Let $g$ be any function that depends ``only'' on time $t,$ i.e. $g: [0, \infty) \to \RR,$ then to denote the derivative of $g$ with respect to $t,$ we use $\frac{\partial}{\partial t}, \frac{\D}{\D{t}}, \cdot, \prime$ interchangeably throughout the article which convey the same meaning.
\subsection{Riemann problem to the system \cref{system dependent on t}}\label{subsection 2} First, we observe that for a smooth solution the system \eqref{system dependent on t}
reduces to $$u_t+f(u)u_x=\kappa(t)(u_{a}(t)-u).$$ From the characteristic equation we have the system of ODEs as follows:
\begin{align}\label{characteristic}
\begin{cases}
&\frac{\D{x(t)}}{\D{t}}=f(u(x(t),t)),\\
&\frac{\D{u(x(t),t)}}{\D{t}}=\kappa(t)(u_{a}(t)-u(x(t),t)),\\
& x(0)=x_0.
\end{cases}
\end{align}
Solving \eqref{characteristic}, we obtain
\begin{align*}
&u(x,t)=e^{-\int_0^t\kappa(\theta)\D{\theta}}\left(\int_0^t F(\theta)\D{\theta}+u_0(x_0)\right),\,\,\, \text{and}\,\,\,\\
&x(t)=x_0+\int_0^t f\left(e^{-\int_0^s \kappa(\theta)\D{\theta}}\left(\int_0^s F(\theta)\D{\theta}+u_0(x_0)\right)\right)\D{s},
\end{align*}
where $F$ is given by $F(t)=\kappa(t)\cdot u_{a}(t)\cdot e^{\int_0^t\kappa(\theta)\D{\theta}}$. This  motivates us to consider the  shadow wave in the following form:
\begin{align}\label{sdw}
U^{\varepsilon}=(v^{\varepsilon}, u^{\varepsilon})(x,t)=
\begin{cases}
\left(v_l, U_l(t)\right),\,\,\,\,\,\,\,\,\,\,\,\, &x< c(t)-\frac{\varepsilon}{2}t-x_{\varepsilon},\\
\left(v_{\varepsilon}(t), u_{\varepsilon}(t)\right),\,\,\,\,\,\, &c(t)-\frac{\varepsilon}{2}t-x_{\varepsilon}<x<c(t)+\frac{\varepsilon}{2}t+x_{\varepsilon},\\
\left(v_r, U_r(t)\right),\,\,\,\,\,\,\,\,\,\,\, &x>c(t)+\frac{\varepsilon}{2}t+x_{\varepsilon},
\end{cases}
\end{align}
where $U_{l,r}(t):=e^{-\int_0^t\kappa(\theta)\D{\theta}}\cdot\left(\int_0^t F(\theta)\D{\theta}+u_{l,r}\right)$ and  $x_{\varepsilon}, v_{\varepsilon}(t)$ are $\mathcal{O}(\varepsilon)$ and $\mathcal{O}(1/\varepsilon),$  and $\lim\limits_{\varepsilon \to 0}u_{\varepsilon}(t)=\chi(t).$ First, we study the above system \eqref{system dependent on t} when initial data contains a $\delta$-measure and is of the following form 
\begin{align}\label{delta initial data}
(v, u)(x,0)=
\begin{cases}
(v_l, u_l),\,\,\,\,\,\, &x<0,\\
(\bar{m}\delta(x), \bar{u}),\,\,\,\,\,\, &x=0,\\
(v_r, u_r),\,\,\,\,\,\, &x>0,
\end{cases}
\end{align}
where $v_{l,r}\geq0$, $\bar{m}>0$ and $u_l>\bar{u}>u_r$. This situation arises when two approaching shock waves interact. Suppose we are given a piecewise constant data: $(v_l, u_l), (v_m, u_m)$ and $(v_r, u_r)$ with $u_l>u_m>u_r.$ The delta shock curve joining $(v_l, u_l)$ to $(v_m, u_m)$ interacts with another delta shock curve connecting the states $(v_m, u_m)$ to $(v_r, u_r)$ at some point $(X, T)$ and at this level, we need to solve a Riemann problem with a $\delta$-initial data. When $\bar{m}=0,$ then the data is purely of Riemann type and can be seen as a particular case of \eqref{delta initial data}. Without loss of any generality, we may assume $(X,T)=(x,0).$

Substituting the shadow wave solution \eqref{sdw} into the system \eqref{system dependent on t}, from the definition of shadow wave we have
\begin{align}
& \lim_{\varepsilon \to 0} \left[\left\langle \frac{\partial}{\partial t}v^{\varepsilon},\varphi \right\rangle+\left\langle \frac{\partial}{\partial x}\left(v^{\varepsilon} f(u^{\varepsilon})\right),\varphi \right\rangle \right]=0,\label{distribution1}\\
&\lim_{\varepsilon \to 0} \left[\left\langle \frac{\partial}{\partial t}\left(v^{\varepsilon}u^{\varepsilon}\right),\varphi \right\rangle+\left\langle \frac{\partial}{\partial x}\left(v^{\varepsilon} u^{\varepsilon} f(u^{\varepsilon})\right),\varphi \right\rangle -\left\langle \kappa(t)(u_a(t)-u^{\varepsilon})v^{\varepsilon}, \varphi \right\rangle \right]=0,\label{distribution2}
\end{align}
for all $\varphi \in C^{\infty}_c(\RR\times [0, \infty))$.\\
Now for a fixed $\varepsilon >0,$ by using integration by parts in the term involving time derivative of \eqref{distribution1}, we get
\begin{align*}
-\left\langle \frac{\partial}{\partial t}v^{\varepsilon},\varphi \right\rangle&=\int_{0}^{\infty}\int_{-\infty}^{\infty}v^{\varepsilon}\frac{\partial}{\partial t}\varphi(x,t)\D{x}\D{t}+\int_{-\infty}^{\infty}v^{\varepsilon}(x,0)\varphi(x,0)\D{x}\\
&=\int_{0}^{\infty}\int_{-\infty}^{c(t)-\frac{\varepsilon}{2}t-x_{\varepsilon}} v_l\frac{\partial}{\partial t}\varphi(x,t)\D{x}\D{t}+ \int_{-\infty}^{-x_{\varepsilon}}v^{\varepsilon}(x,0)\varphi(x,0)\D{x}\\
&+\int_{0}^{\infty}\int_{c(t)-\frac{\varepsilon}{2}t-x_{\varepsilon}}^{c(t)+\frac{\varepsilon}{2}t+x_{\varepsilon}} v_{\varepsilon}(t)\frac{\partial}{\partial t}\varphi(x,t)\D{x}\D{t}+\int_{-x_{\varepsilon}}^{x_{\varepsilon}}v^{\varepsilon}(x,0)\varphi(x,0)\D{x}\\
&+\int_{0}^{\infty}\int_{c(t)+\frac{\varepsilon}{2}t+x_{\varepsilon}}^{\infty} v_r \frac{\partial}{\partial t}\varphi(x,t)\D{x}\D{t}+\int_{x_{\varepsilon}}^{\infty}v^{\varepsilon}(x,0)\varphi(x,0)\D{x}.
\end{align*}

\vspace{.5cm}
Simplifying the above expression, we obtain
\begin{align}\label{EQ2.6}
-\left\langle \frac{\partial}{\partial t}v^{\varepsilon},\varphi \right\rangle&=\int_{0}^{\infty}\left(v_{\varepsilon}(t)-v_l\right)\varphi\left(c(t)-\frac{\varepsilon}{2}t-x_{\varepsilon},t\right)\left(c^{\cdot}(t)-\frac{\varepsilon}{2}\right)\D{t}\nonumber\\
&+\int_{0}^{\infty}\left(v_r-v_{\varepsilon}(t)\right)\varphi\left(c(t)+\frac{\varepsilon}{2}t+x_{\varepsilon},t\right)\left(c^{\cdot}(t)+\frac{\varepsilon}{2}\right)\D{t}\nonumber\\ 
&-\int_0^{\infty}\int_{c(t)-\frac{\varepsilon}{2}t-x_{\varepsilon}}^{c(t)+\frac{\varepsilon}{2}t+x_{\varepsilon}}\frac{\partial}{\partial t}v_{\varepsilon}(t)\varphi(x,t)\D{t}+\int_{-x_{\varepsilon}}^{x_{\varepsilon}} v_{\varepsilon}(0)\varphi(x,0)\D{x}.
\end{align}

\vspace{.5cm}
Similarly, the term involving spatial derivatives of \eqref{distribution1} gives
\begin{align}\label{EQ2.7}
-\left\langle \frac{\partial}{\partial x}v^{\varepsilon} f(u^{\varepsilon}),\varphi \right\rangle &=\int_0^{\infty}\left[v_lf(U_l(t))-v_{\varepsilon}(t) f(u_{\varepsilon}(t))\right]\varphi\left(c(t)-\frac{\varepsilon}{2}t-x_{\varepsilon}, t\right)\D{t}\nonumber\\
&+\int_0^{\infty}\left[v_{\varepsilon}(t) f(u_{\varepsilon}(t))-v_rf(U_r(t))\right]\varphi\left(c(t)+\frac{\varepsilon}{2}t+x_{\varepsilon}, t\right)\D{t}.
\end{align}

\vspace{.5cm}
Next, we use the following Taylor series expansion for $\varphi$ with respect to $x=c(t)$ to evaluate the above integrals, we have
\begin{align}\label{taylor expansion}
&\varphi\left(c(t)-\frac{\varepsilon}{2}t-x_{\varepsilon},t\right)=\varphi(c(t),t)-\left(\frac{\varepsilon}{2}t+x_{\varepsilon}\right)\frac{\partial}{\partial x} \varphi(c(t),t)+ \mathcal O(\varepsilon^2), \nonumber\\
&\varphi\left(c(t)+\frac{\varepsilon}{2}t+x_{\varepsilon},t\right)=\varphi(c(t),t)+\left(\frac{\varepsilon}{2}t+x_{\varepsilon}\right)\frac{\partial}{\partial x} \varphi(c(t),t)+ \mathcal O(\varepsilon^2),\\
&\varphi(x,t)=\varphi(c(t),t)+\mathcal O(\varepsilon),\,\,\, \textnormal{for}\,\,\, c(t)-\frac{\varepsilon}{2}t-x_{\varepsilon}<x<c(t)+\frac{\varepsilon}{2}t+x_{\varepsilon}.\nonumber
\end{align}

\vspace{.5cm}
By employing the Taylor expansions into the equations \eqref{EQ2.6}-\eqref{EQ2.7} and simplifying, we obtain
\begin{align}\label{EQ2.8}
\begin{aligned}
&-\left\langle \frac{\partial}{\partial t}v^{\varepsilon},\varphi \right\rangle-\left\langle \frac{\partial}{\partial x}v^{\varepsilon} f(u^{\varepsilon}),\varphi \right\rangle =\int_{0}^{\infty}\left[c^{\cdot}(t)(v_r-v_l)-\frac{\partial}{\partial t}\left(2\left(\frac{\varepsilon}{2}t+x_{\varepsilon}\right)v_{\varepsilon}(t)\right)\right]\varphi(c(t),t)\D{t}\\
&+\int_{0}^{\infty}\left[\left(v_l f(U_l(t))-v_r f(U_r(t))\right)+\frac{\varepsilon}{2}\left(v_l+v_r\right)\right]\varphi(c(t),t)\D{t}\\
&+\int_{0}^{\infty}c^{\cdot}(t)\left[v_r-2v_{\varepsilon}(t)+v_r\right]\left(\frac{\varepsilon}{2}t+x_{\varepsilon}\right)\frac{\partial}{\partial x}\varphi(c(t),t)\D{t}+\int_{-x_{\varepsilon}}^{x_{\varepsilon}}v_{\varepsilon}(0)\varphi(x,0)\D{x}\\
&+\int_{0}^{\infty}\left[2v_{\varepsilon}(t)f(u_{\varepsilon}(t))-v_lf\left(U_l(t)\right)-v_r f(U_r(t))\right]\left(\frac{\varepsilon}{2}t+x_{\varepsilon}\right)\frac{\partial}{\partial x}\varphi(c(t),t)\D{t}+\mathcal O(\varepsilon).
\end{aligned}
\end{align}
In a similar way as above, we can calculate \eqref{distribution2}. However, we have to consider the contribution of the nonhomogeneous term. Following the same calculations, simplifying the terms for time derivative and the source, we get
\begin{align}\label{EQ2.9}
\begin{aligned}
&-\left\langle \frac{\partial}{\partial t}\left(v^{\varepsilon}u^{\varepsilon}\right),\varphi \right\rangle +\left\langle \kappa(t)(u_a(t)-u^{\varepsilon})v^{\varepsilon}, \varphi \right \rangle\\&=\int_{0}^{\infty}\left[v_{\varepsilon}(t)u_{\varepsilon}(t)-v_l U_l(t)\right]\varphi\left(c(t)-\frac{\varepsilon}{2}t-x_{\varepsilon},t\right)\left(\dot{c}(t)-\frac{\varepsilon}{2}t\right)\D{t}\\
&+\int_{0}^{\infty}\left[v_r U_r(t)-v_{\varepsilon}(t)u_{\varepsilon}(t)\right]\varphi\left(c(t)+\frac{\varepsilon}{2}t+x_{\varepsilon},t\right)\left(\dot{c}(t)+\frac{\varepsilon}{2}t\right)\D{t}\\
&+\int_{0}^{\infty}\int_{c(t)-\frac{\varepsilon}{2}t-x_{\varepsilon}}^{c(t)+\frac{\varepsilon}{2}t+x_{\varepsilon}}\left[\kappa(t)(u_{a}(t)-u_{\varepsilon}(t))v_{\varepsilon}(t)-\frac{\partial}{\partial t}\left(v_{\varepsilon}(t)u_{\varepsilon}(t)\right)\right]\varphi(x,t)\D{x}\D{t}+\int_{-x_{\varepsilon}}^{x_{\varepsilon}}v_{\varepsilon}(0)u_{\varepsilon}(0)\varphi(x,0)\D{x},
\end{aligned}
\end{align}
and for spatial derivatives, we have
\begin{align}\label{EQ2.10}
\begin{aligned}
- \left\langle \frac{\partial}{\partial x}\left(v^{\varepsilon} u^{\varepsilon} f(u^{\varepsilon})\right),\varphi \right\rangle
&=\int_{0}^{\infty}\left[v_l U_l(t)f(U_l(t))-v_{\varepsilon}(t)u_{\varepsilon}(t)f(u_{\varepsilon}(t))\right]\varphi\left(c(t)-\frac{\varepsilon}{2}t-x_{\varepsilon}, t\right)\D{t}\\
&+\int_{0}^{\infty}\left[v_{\varepsilon}(t)u_{\varepsilon}(t)f(u_{\varepsilon}(t))-v_r U_r(t)f(U_r(t))\right]\varphi\left(c(t)+\frac{\varepsilon}{2}t+x_{\varepsilon}, t\right)\D{t}.
\end{aligned}
\end{align}
Again using the Taylor expansion of the test function $\varphi$ in \eqref{EQ2.9}-\eqref{EQ2.10}, we obtain
\begin{align}\label{EQ2.11}
&-\left\langle \frac{\partial}{\partial t}\left(v^{\varepsilon}u^{\varepsilon}\right),\varphi \right\rangle-\left\langle \frac{\partial}{\partial x}\left(v^{\varepsilon} u^{\varepsilon} f(u^{\varepsilon})\right),\varphi \right\rangle +\left\langle \kappa(t)(u_a(t)-u^{\varepsilon})v^{\varepsilon}, \varphi \right \rangle\nonumber\\
& =\int_{0}^{\infty}\left[\dot{c}(t)\left(v_r U_r(t)-v_l U_l(t)\right)+\frac{\varepsilon}{2}\left(v_r U_r(t)+v_l U_l(t)\right)+v_l U_l(t)f(U_l(t))-v_r U_r(t)f(U_r(t))\right]\varphi(c(t),t)\D{t}\nonumber\\
&+\int_{0}^{\infty}2\left(\frac{\varepsilon}{2}t+x_{\varepsilon}\right)\left[\kappa(t)(u_a(t)-u_{\varepsilon}(t))v_{\varepsilon}(t)-\partial_t\left(2\left(\frac{\varepsilon}{2}t+x_{\varepsilon}\right)v_{\varepsilon}(t)u_{\varepsilon}(t)\right)
\right]\varphi(c(t),t)\D{t}\nonumber\\
&+\int_{0}^{\infty}\left[\dot{c}(t)\left(v_r U_r(t)-2v_{\varepsilon}(t)u_{\varepsilon}(t)+v_l U_l(t)\right)\right]\left(\frac{\varepsilon}{2}t+x_{\varepsilon}\right)\frac{\partial}{\partial x}\varphi(x,t)\D{t}\nonumber\\
&+2\int_0^{\infty}\left[v_{\varepsilon}(t)u_{\varepsilon}f(u_{\varepsilon}(t))-v_l U_l(t)f(U_l(t))-v_r U_r(t)f(U_r(t))\right]\left(\frac{\varepsilon}{2}t+x_{\varepsilon}\right)\varphi_x(x,t)\D{t}.
\end{align}
Now passing to the limit as $\varepsilon$ tends to zero in the equations \eqref{EQ2.8} and \eqref{EQ2.11}, we obtain the following relations
\begin{equation}\label{ode}
\begin{aligned}
& \frac{\partial}{\partial t} (\xi(t))=\dot{c}(t)[v]-[vf(U(t))],\,\,\, \xi(0)=\bar{m},\\
& \frac{\partial}{\partial t}\left(\xi(t) \chi(t)\right)+\kappa(t)(\chi(t)-u_a(t))\xi(t)=\dot{c}(t)[vU(t)]-[v U(t)f(U(t))], \,\,\, \xi(0)\chi(0)=\bar{m}\bar{u},\\
&f(\chi(t))=\dot{c}(t),
\end{aligned}
\end{equation}
where $\lim\limits_{\varepsilon \to 0}2\left(\frac{\varepsilon}{2}t+x_{\varepsilon}\right)v_{\varepsilon}(t)=\xi(t)$,  $\lim\limits_{\varepsilon \to 0}u_{\varepsilon}(t)=\chi(t)$ and $[\cdot]:=\cdot_r-\cdot_l$ denotes the jump across the discontinuity curve.
 From the first equation of \eqref{ode}, we have
\begin{equation*}
\xi(t)=c(t)[v]+v_l\int_{0}^{t}f(U_l(\theta))\D{\theta}-v_r\int_{0}^{t}f(U_r(\theta))\D{\theta}+\bar{m}.
\end{equation*}
Setting $\Theta(t)=\xi(t)\chi(t),$ second equation of \eqref{ode} can be written as
\begin{align*}
e^{\int_0^t \kappa(\theta) \D{\theta}} \left[\frac{\partial}{\partial t}\Theta(t)+\kappa(t) \Theta(t)\right]=&F(t)\left[c(t)(v_r-v_l)+v_l\int_0^t  f(U_l(\theta))\D{\theta}-v_r\int_0^t f(U_r(\theta))\D{\theta}  +\bar{m}\right]\\
& +\dot{c}(t) \Big[v_r \Big(\int_0^t F(\theta)\D{\theta}+ u_r\Big)-v_l \Big(\int_0^t F(\theta)\D{\theta}+ u_l\Big)\Big]\\
&-\Big[v_r \Big(\int_0^t F(\theta)\D{\theta}+u_r\Big)f(U_r(t))-v_l \Big(\int_0^t F(\theta)\D{\theta}+u_l\Big)f(U_l(t))\Big], 
\end{align*}
which implies the following identity.
\begin{align}\label{EQ2.12}
\begin{aligned}
\frac{\partial}{\partial t} \left(e^{\int_0^t \kappa(\theta) \D{\theta}} \Theta(t)\right)&= \dot{c}(t)[vu]+[v] \frac{\partial}{\partial t} \left(c(t) \int_0^t F(\theta)\D{\theta}\right)+[v u f(U(t))]+ \bar{m} F(t)\\
&+ v_l \frac{\partial}{\partial t}\left(\int_0^t F(\theta)\D{\theta} \cdot \int_0^t f(U_l(\theta))\D{\theta}\right)-v_r \frac{\partial}{\partial t}\left(\int_0^t F(\theta)\D{\theta} \cdot \int_0^t f(U_r(\theta))\D{\theta}\right).
\end{aligned}
\end{align}
The ODE presented in \eqref{EQ2.12} directly yields,
\begin{equation*}
\Theta(t):=\xi(t)\chi(t)=c(t)[v U(t)]+v_l U_l(t)\int_{0}^{t}f(U_l(\theta))\D{\theta}-v_rU_r(t)\int_{0}^{t}f(U_r(\theta))\D{\theta}+\bar{m}\bar{U}(t)
\end{equation*}
where $\bar{U}(t)=e^{-\int_0^t \kappa(\theta)\D{\theta}} \left(\int_0^t F(\theta)\D{\theta} + \bar{u}\right)$. Thus $\chi(t)$ is of the following form
\begin{align*}
\chi(t)=\frac{c(t)[v U(t)]+v_l(U_l(t))\int_{0}^{t}f(U_l(\theta))\D{\theta}-v_rU_r(t)\int_{0}^{t}f(U_r(\theta))\D{\theta}+\bar{m}(\bar{U}(t))}{c(t)[v]+v_l\int_{0}^{t}f(U_l(\theta))\D{\theta}-v_r\int_{0}^{t}f(U_r(\theta))\D{\theta}+\bar{m}}.
\end{align*}
Now, we need to consider the region $\sD:=\{(c(t),t) |\int_{0}^{t}f(U_r(\theta))\D{\theta}\leq c(t)\leq \int_{0}^{t}f(U_l(\theta))\D{\theta}\}$ to prove the overcompressibility condition  as $u_l>\bar{u}>u_r$. 
In the above region $\sD$, we find $\xi(t)>0,$
\begin{align*}
\chi(t)=U_l(t)+\frac{v_r(U_l(t)-U_r(t))\left(\int_{0}^{t}f(U_r(\theta))d\theta-c(t)\right)+\bar{m}\left(\bar{U}(t)-U_l(t)\right)}{\xi(t)}<U_l(t),
\end{align*}
and
\begin{align*}
\chi(t)=U_r(t)+\frac{v_l(U_l(t)-U_r(t))\left(\int_{0}^{t}f(U_l(\theta))d\theta-c(t)\right)+\bar{m}\left(\bar{U}(t)-U_r(t)\right)}{\xi(t)}>U_r(t).
\end{align*}
Furthermore, since $f$ is $C^1$ the following ODE
\begin{align*}
    &\dot{c}(t)=f(\chi(t))=\cH(c(t), t),\\
    &c(0)=0,
\end{align*} has a unique solution \cite{Coddington, hanchun2007} in the region $\sD$.
Finally, by using the increasing property of $f,$ we have $f(U_r(t))<f(\chi(t))<f(U_l(t)).$\\
Next, we turn our attention to the case of Riemann-type initial data, i.e.,
\begin{equation}\label{Rdata}
\begin{aligned}
(v, u)(x,0)=
\begin{cases}
(v_l, u_l),\,\,\,\,\,\,\ x<0,\\
(v_r, u_r),\,\,\,\,\,\, x>0,
\end{cases}
\end{aligned}
\end{equation}
where $v_{l,r}>0$.
When $u_l> u_r$ in \eqref{Rdata},  substituting the shadow wave
\begin{align}\label{sdw2}
U^{\varepsilon}=(v^{\varepsilon}, u^{\varepsilon})(x,t)=
\begin{cases}
\left(v_l, U_l(t)\right),\,\,\,\,\,\,\,\,\,\,\,\, &x< c(t)-\frac{\varepsilon}{2}t,\\
\left(v_{\varepsilon}(t), u_{\varepsilon}(t)\right),\,\,\,\,\,\, &c(t)-\frac{\varepsilon}{2}t<x<c(t)+\frac{\varepsilon}{2}t,\\
\left(v_r, U_r(t)\right),\,\,\,\,\,\,\,\,\,\,\, &x>c(t)+\frac{\varepsilon}{2}t,
\end{cases}
\end{align}
we obtain the system of ODE's \eqref{ode} where $\lim\limits_{\varepsilon \to 0}\varepsilon t v_{\varepsilon}(t)=\xi(t), \lim\limits_{\varepsilon\to 0}u_{\varepsilon}(t)=\chi(t)$ with the initial conditions $\xi(0)=0$ and $\xi(0)\chi(0)=0$. We take the particular form of $u_{\varepsilon}(t)=e^{-\int_0^t \kappa(\theta)d\theta} \left(\int_0^t F(\theta)d\theta + u_{\varepsilon}\right)$, in which $u_{\varepsilon}$'s are constants and independent of $t$. Hence, $$\lim\limits_{\varepsilon \to 0}e^{-\int_0^t \kappa(\theta)d\theta} \left(\int_0^t F(\theta)d\theta + u_{\varepsilon}\right)=e^{-\int_0^t \kappa(\theta)d\theta} \left(\int_0^t F(\theta)d\theta + \chi_0\right)=\chi(t),$$ where $\chi_0$ is constant. Thus the second equation of \eqref{ode} takes the simple form
\begin{align*}
		\frac{\partial \xi(t)}{\partial t}\cdot\chi(t)=c^{\cdot}(t)[v(U(t))]-[v(U(t))f(U(t))].
\end{align*}
		The first equation of \eqref{ode} yields
		\begin{equation*}
		f(\chi(t))\left([v](\chi(t))-[v(U(t))]\right)-(\chi(t))\left[vf(U(t))\right]+\left[v(U(t))f(U(t))\right]=0.
		\end{equation*}
		Let us consider the function $\cK(x)$ as
		\begin{equation*}
		\cK(x)=f(x)\Big([v]x-[v(U(t))]\Big)-x[vf(U(t))]+[v(U(t))f(U(t))].
		\end{equation*}
		
		One can observe that $\cK(U_l(t))=v_r(u_l-u_r)\left(f(U_l(t))-f(U_r(t))\right)>0$, since $f$ is increasing. Similarly, we have $\cK(U_r(t))<0.$
		Also, a simple calculation shows
		\begin{equation*}
		\cK^{\prime}(x)=v_r\left(x-(U_r(t))\right)f^{\prime}(x)+v_l\left((U_l(t))-x\right)f^{\prime}(x)+v_r\left(f(x)-f(U_r(t))\right)+v_l\left(f(U_l(t))-f(x)\right).
		\end{equation*}
		Since $f$ is increasing, we find $\cK^{\prime}(x)>0$ and therefore $\cK(x)=0$ has a unique solution in $(U_r(t), U_l(t))$. Now using $\cK(\chi(t))=0$,  we obtain the overcompressibility condition:

		\begin{equation*}
		U_r(t)<\chi(t)<U_l(t)\,\,\,\, \text{implies}\,\,\,\, f(U_r(t))<f(\chi(t))<f(U_l(t)).
		\end{equation*}  
  When $u_l<u_r$ the solution consists of contact discontinuity and vacuum, i.e.,
  \begin{align}\label{eqq2.16}
(v, u)(x,t)=
\begin{cases}
\left(v_l, U_l(t)\right),\,\,\,\,\,\,\,\,\,\,\,\, &x< \int_0^t f(U_l(s))ds,\\
\left(0, z(x,t)\right),\,\,\,\,\,\, &\int_0^t f(U_l(s))ds<x<\int_0^t f(U_r(s))ds,\\
\left(v_r, U_r(t)\right),\,\,\,\,\,\,\,\,\,\,\, &x>\int_0^t f(U_r(s))ds,
\end{cases}
  \end{align}
  where $z(x,t)$ is a continuous function that satisfies $z\left(\int_0^t f(U_l(s))ds,t\right)=U_l(t)$ and $z\left(\int_0^t f(U_r(s))ds,t\right)=U_r(t).$
  Summarizing the above discussion we prove the following:
  \begin{lemma}[Riemann solution]\label{RPL}
      The system \eqref{system dependent on t} with initial data \eqref{Rdata} has a unique shadow wave solution of the form \eqref{sdw2} in the case $u_l>u_r.$ If $u_l\leq u_r,$ then the solution is a combination of contact discontinuities and vacuum of the form \eqref{eqq2.16}. 
  \end{lemma}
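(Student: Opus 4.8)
The plan is to handle the overcompressive regime $u_l>u_r$ and the rarefied regime $u_l\le u_r$ separately, in each case producing an explicit candidate and then checking the requirements of \cref{first Defn} together with the admissibility (overcompressibility) condition.

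For $u_l>u_r$, I would substitute the ansatz \eqref{sdw2} (which is \eqref{sdw} with $x_{\varepsilon}=0$, width $\varepsilon t$, and height $v_{\varepsilon}(t)=\mathcal O(1/\varepsilon)$) into \eqref{system dependent on t}, repeat verbatim the integration-by-parts and Taylor-expansion computation that produced \eqref{EQ2.8} and \eqref{EQ2.11}, and let $\varepsilon\to0$. This yields the reduced system \eqref{ode}, now with the Riemann-type initial conditions $\xi(0)=\xi(0)\chi(0)=0$. Choosing $u_{\varepsilon}(t)=\E^{-\int_0^t\kappa(\theta)\D\theta}\bigl(\int_0^t F(\theta)\D\theta+u_{\varepsilon}\bigr)$ with $u_{\varepsilon}$ constant decouples the momentum relation into $\frac{\partial\xi}{\partial t}\,\chi(t)=\dot c(t)[vU(t)]-[vU(t)f(U(t))]$, and eliminating $\xi$ and $\dot c(t)=f(\chi(t))$ between this and the mass relation leaves the single algebraic equation $\cK(\chi(t))=0$ for the function $\cK$ introduced above. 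The analytic core is then: $\cK$ is strictly increasing because $f'>0$ forces $\cK'>0$, while $\cK(U_r(t))<0<\cK(U_l(t))$; hence $\chi(t)$ is uniquely pinned down in $(U_r(t),U_l(t))$ for each $t$, and monotonicity of $f$ upgrades this to the overcompressibility inequalities $f(U_r(t))<\dot c(t)<f(U_l(t))$. Substituting $\chi(t)$ back, the discontinuity curve is recovered by solving the scalar ODE $\dot c(t)=f(\chi(t))=\cH(c(t),t)$, $c(0)=0$, which has a unique solution by the $C^1$-regularity of $f$; one checks along the way that $\xi(t)>0$ (using $c(t)\in(\int_0^tf(U_r),\int_0^tf(U_l))$ together with the explicit formula for $\xi$), so the profile is genuinely of the form \eqref{sdw2}. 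Uniqueness of the shadow wave is built in, since every step above — the reduction to \eqref{ode}, the root of $\cK$, and the ODE for $c$ — is forced.

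For $u_l\le u_r$, I would instead exhibit the configuration \eqref{eqq2.16}: the states $(v_l,U_l(t))$ and $(v_r,U_r(t))$ are separated by the characteristics $x=\int_0^t f(U_l(s))\D s$ and $x=\int_0^t f(U_r(s))\D s$, which cannot cross because $f(U_l(t))<f(U_r(t))$, and on the wedge between them one sets $v\equiv0$ with $u=z(x,t)$ any continuous function attaining $U_l(t)$ and $U_r(t)$ on the two edges — for instance the one built from the characteristic formulas in \eqref{characteristic}. On the vacuum wedge both equations of \eqref{system dependent on t} and the source term vanish identically, while across each edge the jump relations collapse to ``speed $=f(U_{l,r}(t))$''; hence \eqref{eqq2.16} is a weak solution, and it is the admissible one since no overcompressive curve can be inserted when $u_l\le u_r$.

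The main obstacle I anticipate is bookkeeping rather than a single hard estimate: making the $\varepsilon\to0$ passage clean while juggling the two scalings (width $\mathcal O(\varepsilon)$, height $\mathcal O(1/\varepsilon)$) so that $\varepsilon t\,v_{\varepsilon}(t)\to\xi(t)$ survives and every genuinely lower-order term drops, and then verifying that the coupled algebraic–ODE system stays admissible ($\chi(t)\in(U_r(t),U_l(t))$ with $\xi(t)>0$) for all $t>0$, i.e. that the construction does not degenerate in finite time. Both are settled by the monotonicity of $f$ together with the explicit expressions for $U_{l,r}(t)$ and for $\xi(t)$ already displayed above.
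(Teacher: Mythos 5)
Your proposal is correct and follows essentially the same route as the paper: substitution of the ansatz \eqref{sdw2} to reduce to the ODE system \eqref{ode} with zero initial mass, the particular exponential form of $u_{\varepsilon}(t)$ to decouple the momentum relation, the sign change and strict monotonicity of $\cK$ to pin down $\chi(t)\in(U_r(t),U_l(t))$ uniquely with overcompressibility, and the explicit vacuum/contact-discontinuity configuration \eqref{eqq2.16} when $u_l\le u_r$. The only cosmetic difference is that in the pure Riemann case $\cK$ does not involve $c(t)$, so $c(t)=\int_0^t f(\chi(s))\,\D s$ is obtained by direct integration rather than by appealing to ODE uniqueness as in the $\delta$-initial-data case.
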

  
\begin{lemma}[Interaction of shadow waves]\label{IL}
      If $u_l>\bar{u}>u_r,$ then the system \eqref{system dependent on t} with initial data \eqref{delta initial data} has a unique shadow wave solution of the form \eqref{sdw}.
  \end{lemma}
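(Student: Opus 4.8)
The plan is to turn the statement ``$U^{\varepsilon}$ in \eqref{sdw} solves \eqref{system dependent on t} in the sense of \cref{first Defn}'' into a closed system of ODEs for the profile $\chi(t)=\lim_{\varepsilon\to0}u_{\varepsilon}(t)$, the rescaled strength $\xi(t)=\lim_{\varepsilon\to0}2\bigl(\tfrac{\varepsilon}{2}t+x_{\varepsilon}\bigr)v_{\varepsilon}(t)$, and the shock curve $c(t)$, and then to solve it. Substituting \eqref{sdw} into \eqref{distribution1}--\eqref{distribution2}, expanding the test function about $x=c(t)$ by \eqref{taylor expansion}, and passing to the limit $\varepsilon\to0$ in \eqref{EQ2.6}--\eqref{EQ2.11} produces precisely the relations \eqref{ode}, where the $\mathcal O(\varepsilon)$ boundary terms at $t=0$ supply the initial data $\xi(0)=\bar m$ and $\xi(0)\chi(0)=\bar m\bar u$. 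So it remains to show that \eqref{ode} has a unique solution with these data whose middle state is overcompressive.

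First I would integrate the first line of \eqref{ode}, which is linear in $\xi$, to get $\xi(t)=c(t)[v]+v_l\int_0^t f(U_l(\theta))\D\theta-v_r\int_0^t f(U_r(\theta))\D\theta+\bar m$. Setting $\Theta=\xi\chi$, multiplying the second line of \eqref{ode} by the integrating factor $e^{\int_0^t\kappa(\theta)\D\theta}$, and substituting the formula for $\xi$ together with the definition $F(t)=\kappa(t)u_a(t)e^{\int_0^t\kappa}$ collapses the right-hand side into the total derivative \eqref{EQ2.12}; integrating yields an explicit $\Theta(t)$, hence the explicit quotient formula for $\chi(t)=\Theta(t)/\xi(t)$ recorded just before the lemma.

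Next I would work in the region $\sD=\{(c(t),t):\int_0^t f(U_r(\theta))\D\theta\le c(t)\le\int_0^t f(U_l(\theta))\D\theta\}$, the natural domain when $u_l>\bar u>u_r$. Rearranging $\chi(t)$ by pivoting about $U_l(t)$ and about $U_r(t)$ gives the two identities displayed above, from which one reads off $\xi(t)>0$ and $U_r(t)<\chi(t)<U_l(t)$ on $\sD$; since $f$ is strictly increasing this is the overcompressibility condition $f(U_r(t))<f(\chi(t))<f(U_l(t))$, which says the speed $\dot c(t)=f(\chi(t))$ from the last line of \eqref{ode} lies strictly between the two boundary speeds of $\sD$, so a trajectory issued from $c(0)=0$ stays in $\sD$. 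Because $f\in C^1$, the map $(c,t)\mapsto f(\chi(c,t))=:\cH(c,t)$ is locally Lipschitz in $c$ on $\sD$, so $\dot c=\cH(c,t)$, $c(0)=0$ has a unique solution by Picard--Lindel\"of that extends globally inside $\sD$ by the speed bound; the explicit formulas for $\xi$ and $\chi$ then give uniqueness of the whole SDW. The purely Riemann case $\bar m=0$ is the same argument with vanishing initial data.

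The main obstacle is the self-consistency of the last step: the formula for $\chi$ depends on $c$, while $c$ is defined through $\dot c=f(\chi)$, so one cannot invoke monotonicity of $f$ to get the overcompressibility bounds before knowing $\chi\in(U_r,U_l)$. The way around it is to pose the ODE on the invariant set $\sD$, where the algebraic identities for $\chi$ force $\xi>0$ and $U_r<\chi<U_l$ unconditionally; this keeps the characteristic speed inside the cone bounding $\sD$ and closes the loop. A lesser point is to check that the assumed $\mathcal O(\varepsilon)$ and $\mathcal O(1/\varepsilon)$ scalings of $x_{\varepsilon}$ and $v_{\varepsilon}$ in \eqref{sdw} are consistent with $\xi(t)$ finite and positive and with the defining limits for $\xi,\chi$ existing --- which is immediate once the explicit formulas are in hand.
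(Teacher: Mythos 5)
Your proposal is correct and follows essentially the same route as the paper: derive the ODE system \eqref{ode} by substituting \eqref{sdw} and passing to the limit, integrate the first equation for $\xi$, use the integrating factor $e^{\int_0^t\kappa}$ to collapse the second equation into the total derivative \eqref{EQ2.12} and obtain the quotient formula for $\chi$, then verify $\xi>0$ and overcompressibility on the region $\sD$ via the two pivoting identities, and close with uniqueness for $\dot c=f(\chi(t))=\cH(c(t),t)$, $c(0)=0$. Your explicit remark on the self-consistency of the last step (that $\sD$ must be invariant so the bounds on $\chi$ are available before solving for $c$) is a point the paper handles only implicitly, but the argument is the same.
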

\begin{remark}
We observe that conservation of mass holds true due to the first equation in the system \eqref{system dependent on t}. For the quantity
\begin{align*}
\cM_0(t):=\xi(t)+\int_{-\infty}^{c(t)}v(x,t)\D{x}+\int_{c(t)}^{\infty}v(x,t)\D{x},
\end{align*}
we have $\cM_0(0)=\cM_0(t)$ for all $t>0.$ Indeed, using the first equation of \eqref{ode}, we get
\begin{align*}
    \dot{\cM_0}(t)&= \dot{\xi}(t) + v(c(t)-,t)\dot{c}(t)+\int_{-\infty}^{c(t)}\frac{\partial}{\partial t}v(x,t)\D{x}-v(c(t)+,t)\dot{c}(t)+\int_{c(t)}^{\infty}\frac{\partial}{\partial t}v(x,t)\D{x}\\
    &=\dot{\xi}(t)-\dot{c}(t)[v]+[vf(U)]=0.
\end{align*}
However, the momentum satisfies a differential equation. Define
\begin{align*}
    \cM_1(t):=\xi(t)\chi(t)+\int_{-\infty}^{c(t)}v(x,t)u(x,t)\D{x}+\int_{c(t)}^{\infty}v(x,t)u(x,t)\D{x}.
\end{align*}
A similar calculation as above leads to
\begin{align*}
    \dot{\cM_1}(t)=&\frac{\partial}{\partial t}(\xi(t)\chi(t))-\dot{c}(t)[v]+[vUf(U)]+\int_{-\infty}^{c(t)}\kappa(t)(u_{a}(t)-u(x,t))v(x,t)\D{x}\\
    &+\int_{c(t)}^{\infty}\kappa(t)(u_{a}(t)-u(x,t))v(x,t)\D{x}\\
    =&\frac{\partial}{\partial t}(\xi(t)\chi(t))-\dot{c}(t)[v]+[vUf(U)]-\kappa(t)\left[\int_{-\infty}^{c(t)}v(x,t)u(x,t)\D{x}+\int_{c(t)}^{\infty}v(x,t)u(x,t)\D{x}\right]\\
    &+\kappa(t)u_{a}(t)\left[\int_{-\infty}^{c(t)}v(x,t)\D{x}+\int_{c(t)}^{\infty}v(x,t)\D{x}\right]\\
    =&\kappa(t)(u_{a}(t)-\chi(t))\xi(t)-\kappa(t)(\cM_1(t)-\xi(t)\chi(t))+\kappa(t)u_{a}(t)(\cM_0(t)-\xi(t))\\
    =&\kappa(t)(u_{a}(t)\cM_0(t)-\cM_1(t)).
\end{align*}
Therefore momentum $\cM_1$ satisfies the ODE
\begin{align}\label{momentum ODE}
    \dot{\cM_1}(t)+\kappa(t)\cM_1(t)=\kappa(t)u_{a}(t)\cM_0(t).
\end{align}
Solving \eqref{momentum ODE} explicitly, we get 
\begin{align}\label{nonconstant momentum}
    \cM_1(t)= e^{-\int_0^t \kappa(\theta)\D{\theta}}\left[\int_0^t e^{\int_0^s \kappa(\theta)\D{\theta}}\kappa(s)u_{a}(s)\cM_0(s)\D{s}+\bar{m}\bar{u}\right].
\end{align}
When $\kappa(t):=\kappa$ and $u_{a}(t):=u_a$ are constants, using $\dot{\cM_0}(t)=0$ we get a simplified form of \eqref{momentum ODE}
\begin{align*}
    \Ddot{\cM_1}(t)+\kappa \dot{\cM_1}(t)=0,
\end{align*}
which gives
\begin{align}\label{constant momentum}
    \cM_1(t)=\left(\bar{m}\bar{u}-C\right)+C e^{-\kappa t}
\end{align}
for some constant $C.$ From the expressions \eqref{nonconstant momentum} and \eqref{constant momentum} it can be easily seen that the momentum is conserved as $t\to 0$.
\end{remark}
\subsection{Riemann problem to the system \cref{s7eq1}} Now we extend the \cref{RPL} and \cref{IL} for the drift flux equation of two-phase flow. We start with the construction of shadow wave solution for \eqref{s7eq1} adjoined with the $\delta$ initial data. Similar to \cref{subsection 2}, we consider the shadow waves solution
\begin{align}\label{s7eq3}
U^{\varepsilon}=(v^{\varepsilon}, w^{\varepsilon}, u^{\varepsilon})(x,t)=
\begin{cases}
\left(v_l, w_l, U_l(t)\right),\,\,\,\,\,\,\,\,\,\,\,\, &x< c(t)-\frac{\varepsilon}{2}t-x_{\varepsilon},\\
\left(v_{\varepsilon}(t), w_{\varepsilon}(t), u_{\varepsilon}(t)\right),\,\,\,\,\,\, &c(t)-\frac{\varepsilon}{2}t-x_{\varepsilon}<x<c(t)+\frac{\varepsilon}{2}t+x_{\varepsilon},\\
\left(v_r, w_r, U_r(t)\right),\,\,\,\,\,\,\,\,\,\,\, &x>c(t)+\frac{\varepsilon}{2}t+x_{\varepsilon},
\end{cases}
\end{align}
where $U_{l,r}(t):=e^{-\int_0^t\kappa(\theta)\D{\theta}}\cdot\left(\int_0^t F(\theta)\D{\theta}+u_{l,r}\right)$ and  $x_{\varepsilon}, v_{\varepsilon}(t)$ are $\mathcal{O}(\varepsilon)$ and $\mathcal{O}(1/\epsilon),$ and $\lim\limits_{\varepsilon \to 0}u_{\varepsilon}(t)=\chi(t).$ We study the system \eqref{s7eq1} when initial data contains a $\delta$-measure and is of the following form 
\begin{align*}
(v, w, u)(x,0)=
\begin{cases}
(v_l, v_l, u_l),\,\,\,\,\,\, &x<0,\\
(\bar{m}\delta(x), \bar{n}\delta(x),  \bar{u}),\,\,\,\,\,\, &x=0,\\
(v_r, w_r, u_r),\,\,\,\,\,\, &x>0,
\end{cases}
\end{align*}
where $v_{l,r}, w_{l, r}\geq0$, $\bar{m}, \bar{n}>0$ and $u_l>\bar{u}>u_r$. 

\vspace{.5cm}
Substituting the above shadow wave solution \eqref{s7eq3} into the system \eqref{s7eq1}, from \cref{first Defn} we have
\begin{align}
& \lim_{\varepsilon \to 0} \left[\left\langle \frac{\partial}{\partial t}v^{\varepsilon},\varphi \right\rangle+\left\langle \frac{\partial}{\partial x}\left(v^{\varepsilon} f(u^{\varepsilon})\right),\varphi \right\rangle \right]=0,\label{s7distribution1}\\
& \lim_{\varepsilon \to 0} \left[\left\langle \frac{\partial}{\partial t}w^{\varepsilon},\varphi \right\rangle+\left\langle \frac{\partial}{\partial x}\left(w^{\varepsilon} f(u^{\varepsilon})\right),\varphi \right\rangle \right]=0,\label{s7distribution2}\\
&\lim_{\varepsilon \to 0} \left[\left\langle \frac{\partial}{\partial t}\left((v^{\varepsilon}+w^{\varepsilon})u^{\varepsilon}\right),\varphi \right\rangle+\left\langle \frac{\partial}{\partial x}\left((v^{\varepsilon}+w^{\varepsilon})u^{\varepsilon} f(u^{\varepsilon})\right),\varphi \right\rangle -\left\langle \kappa(t)(u_a(t)-u^{\varepsilon})(v^{\varepsilon}+w^{\varepsilon}), \varphi \right\rangle \right]=0,\label{s7distribution3}
\end{align}
for all $\varphi \in C^{\infty}_c(\RR\times [0, \infty))$.\\

Now passing to the limit as $\varepsilon$ tends to zero in the equations \eqref{s7distribution1}-\eqref{s7distribution3}, we obtain the following relations
\begin{align}\label{s7eq8}
& \frac{\partial}{\partial t} (\xi_v(t))=\dot{c}(t)[v]-[vf(U(t))],\,\,\, \xi_v(0)=\bar{m},\\
& \frac{\partial}{\partial t} (\xi_w(t))=\dot{c}(t)[w]-[wf(U(t))],\,\,\, \xi_w(0)=\bar{n},\label{s7eq2.25}\\
& \frac{\partial}{\partial t}\left((\xi_v(t)+\xi_w(t))\chi(t)\right)+\kappa(t)(\chi(t)-u_a(t))(\xi_v(t)+\xi_w(t))=\dot{c}(t)[(v+w)U(t)]-[(v+w) U(t)f(U(t))],\nonumber\\
&(\xi_v(0)+\xi_w(0))\chi(0)=(\bar{m}+\bar{n})\bar{u},\label{s7eq2.26}\\
&f(\chi(t))=\dot{c}(t),\label{s7eq2.27}
\end{align}
where $\lim\limits_{\varepsilon \to 0}2\left(\frac{\varepsilon}{2}t+x_{\varepsilon}\right)v_{\varepsilon}(t)=\xi_v(t)$,  $\lim\limits_{\varepsilon \to 0}2\left(\frac{\varepsilon}{2}t+x_{\varepsilon}\right)w_{\varepsilon}(t)=\xi_w(t)$, $\lim\limits_{\varepsilon \to 0}u_{\varepsilon}(t)=\chi(t)$ and $[\cdot]:=\cdot_r-\cdot_l$ denotes the jump across the discontinuity curve.

Following exactly the same calculations of \cref{subsection 2} from \eqref{s7eq8}-\eqref{s7eq2.27}, we obtain
\begin{align*}
\Theta(t)+\Phi(t):&=\chi(t)(\xi_v(t)+\xi_w(t))
=c(t)[(v+w) U(t)]\\
&+(v_l+w_l)U_l(t)\int_{0}^{t}f(U_l(\theta))\D{\theta}-(v_r+w_r)U_r(t)\int_{0}^{t}f(U_r(\theta))\D{\theta}+(\bar{m}+\bar{n})\bar{U}(t),
\end{align*}
where $\Theta(t):=\xi_v(t)\chi(t)$ and $\Phi(t):=\xi_w(t)\chi(t)$ and $\bar{U}$ denotes the same from \cref{subsection 2}. Again, the overcompressibility for shadow wave solution \eqref{s7eq3} follows from the arguments of \cref{subsection 2}.

Now we will consider the case of Riemann-type initial data, i.e.,
\begin{equation}\label{s7eq10}
\begin{aligned}
(v, w, u)(x,0)=
\begin{cases}
(v_l, w_l, u_l),\,\,\,\,\,\,\ x<0,\\
(v_r, w_r, u_r),\,\,\,\,\,\, x>0,
\end{cases}
\end{aligned}
\end{equation}
where $v_{l,r}, w_{l,r}>0$.
For Riemann type of initial data \eqref{s7eq10} when $u_l> u_r,$  substituting the shadow wave
\begin{align*}
U^{\varepsilon}=(v^{\varepsilon}, w^{\varepsilon},  u^{\varepsilon})(x,t)=
\begin{cases}
\left(v_l, w_l,  U_l(t)\right),\,\,\,\,\,\,\,\,\,\,\,\, &x< c(t)-\frac{\varepsilon}{2}t,\\
\left(v_{\varepsilon}(t), w_{\varepsilon}(t), u_{\varepsilon}(t)\right),\,\,\,\,\,\, &c(t)-\frac{\varepsilon}{2}t<x<c(t)+\frac{\varepsilon}{2}t,\\
\left(v_r, w_r, U_r(t)\right),\,\,\,\,\,\,\,\,\,\,\, &x>c(t)+\frac{\varepsilon}{2}t,
\end{cases}
\end{align*}
we have the ODE's \eqref{s7eq8}-\eqref{s7eq2.26} with $\lim\limits_{\varepsilon \to 0}\varepsilon t v_{\varepsilon}(t)=\xi_v(t), \lim\limits_{\varepsilon \to 0}\varepsilon t w_{\varepsilon}(t)=\xi_w(t) ~\text{and}~\lim\limits_{\varepsilon\to 0}u_{\varepsilon}(t)=\chi(t)$ with the initial conditions $\xi_v(0)=0$, $\xi_w(0)=0$ and $(\xi_v(0)+\xi_w(0))\chi(0)=0$. As in the \cref{sec2}, we take the particular form of $u_{\varepsilon}(t)=e^{-\int_0^t \kappa(\theta)d\theta} \left(\int_0^t F(\theta)d\theta + u_{\varepsilon}\right)$, where $u_{\varepsilon}$'s are constants and independent of $t$. Hence $$\lim\limits_{\varepsilon \to 0}e^{-\int_0^t \kappa(\theta)d\theta} \left(\int_0^t F(\theta)d\theta + u_{\varepsilon}\right)=e^{-\int_0^t \kappa(\theta)d\theta} \left(\int_0^t F(\theta)d\theta + \chi_0\right),$$ where $\chi_0$ is constant. Thus the third equation of \eqref{s7eq8} takes the simple form
\begin{align*}
		\frac{\partial}{\partial t}\left((\xi_v(t)+\xi_w(t)\right)\cdot\chi(t)=c^{\cdot}(t)[(v+w)U(t)]-[(v+w)U(t)f(U(t))].
\end{align*}
		The first equation of \eqref{s7eq8} implies
		\begin{equation*}
		f(\chi(t))\left([v+w](\chi(t))-[(v+w)(U(t))]\right)-(\chi(t))\left[(v+w)f(U(t))\right]+\left[(v+w)(U(t))f(U(t))\right]=0.
		\end{equation*}
Now considering the function $\cK(x)$ as
		\begin{equation*}
		\cK(x)=f(x)\Big([v+w]x-[(v+w)(U(t))]\Big)-x[(v+w)f(U(t))]+[(v+w)(U(t))f(U(t))].
		\end{equation*}
and following the arguments of \cref{sec2}, we show that the overcompressibility condition
		\begin{equation*}
		U_r(t)<\chi(t)<U_l(t)\,\,\,\, \text{implies}\,\,\,\, f(U_r(t))<f(\chi(t))<f(U_l(t))
		\end{equation*}  
holds.

In the case of $u_l<u_r$ the solution consists of contact discontinuity and vacuum, i.e.,
  \begin{align*}
(v, w, u)(x,t)=
\begin{cases}
\left(v_l, w_l,  U_l(t)\right),\,\,\,\,\,\,\,\,\, &x< \int_0^t f(U_l(s))ds,\\
\left(0, 0, z(x,t)\right),\,\,\,\,\,\, &\int_0^t f(U_l(s))ds<x<\int_0^t f(U_r(s))ds,\\
\left(v_r, w_r, U_r(t)\right),\,\,\,\,\,\,\,\,\,\,\, &x>\int_0^t f(U_r(s))ds,
\end{cases}
  \end{align*}
  where $z(x,t)$ is a continuous function that satisfies $z\left(\int_0^t f(U_l(s))ds,t\right)=U_l(t)$ and $z\left(\int_0^t f(U_r(s))ds,t\right)=U_r(t).$
Summarizing the above calculations, we proved the analogous results of \cref{RPL} and \cref{IL} for system \eqref{s7eq1}.
\section{Entropy inequality}\label{sec3}
In this section, we introduce the \textit{dissipative shadow wave} solution for the system \eqref{system dependent on t} and show the equivalence of dissipative shadow waves and overcompressibility condition. We can express the entropy-entropy flux pair as follows:
\begin{align*}
\eta(v,u)=\frac{1}{2}vf^2(u)~~\text{and}~~q(v,u)=\frac{1}{2}vf^3(u),
\end{align*}
where $\eta$ and $q$ represent entropy and entropy flux, respectively.
\begin{definition}
	A shadow wave $U^{\varepsilon}=(v^{\varepsilon},u^{\varepsilon})$ is called a dissipative shadow wave solution if it satisfies the following entropy inequality
	\begin{align}\label{entropy}
	\lim_{\varepsilon\to 0}\left(\left\langle\frac{\partial}{\partial t}\eta(v^{\varepsilon},u^{\varepsilon}), \varphi\right\rangle+\left\langle\frac{\partial}{\partial x}q(v^{\varepsilon},u^{\varepsilon}), \varphi\right\rangle-\left\langle\kappa(t)(u_{a}(t)-u^{\varepsilon})v^{\varepsilon}f(u^{\varepsilon})f^{\prime}(u^{\varepsilon}), \varphi\right\rangle\right)\leq 0,
	\end{align}
	for all non-negative $\varphi \in C^{\infty}_c(\RR\times(0,\infty))$.
\end{definition}
\begin{theorem}
	A shadow wave solution $U^{\varepsilon}=(v^{\varepsilon}, u^{\varepsilon})$ is dissipative if and only if 
	\begin{align}\label{dissipative}
	&2\xi(t)f(\chi(t))f^{\prime}(\chi(t)) \left\{\frac{\partial}{\partial t}\chi(t)+\kappa(t)\left(\chi(t)-u_{a}(t)\right)\right\}+ \frac{\partial}{\partial t}\xi(t)f^2(\chi(t))\nonumber\\
	&\leq f(\chi(t))[vf^2(U(t))]-[vf^3(U(t))],
	\end{align}
	where $\xi(t)$ and $\chi(t)$ are defined as before.
\end{theorem}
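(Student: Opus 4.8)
The plan is to repeat, with the entropy pair $(\eta,q)=\bigl(\tfrac12 vf^2(u),\tfrac12 vf^3(u)\bigr)$, the Rankine--Hugoniot-type computation already carried out for the two conservation laws in \cref{subsection 2}, and then read off the sign condition. The first ingredient is the pointwise identity, valid for every $C^1$ solution of \eqref{system dependent on t},
\begin{equation*}
\partial_t\eta(v,u)+\partial_x q(v,u)=\kappa(t)\bigl(u_{a}(t)-u\bigr)\,v\,f(u)f^{\prime}(u),
\end{equation*}
which follows by inserting $v_t=-(vf(u))_x$ and $u_t=-f(u)u_x+\kappa(t)(u_a(t)-u)$ into $\partial_t\eta+\partial_x q$ and observing that every term carrying a spatial derivative of $v$ or $u$ cancels. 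Consequently the integrand on the left of \eqref{entropy} vanishes on the two regions $\{x<c(t)-\tfrac\varepsilon2 t-x_\varepsilon\}$ and $\{x>c(t)+\tfrac\varepsilon2 t+x_\varepsilon\}$ where the shadow wave \eqref{sdw} is a classical solution, and the entire limit in \eqref{entropy} concentrates, as $\varepsilon\to0$, on the $\mathcal O(\varepsilon)$-thin middle strip and the two curves bounding it.

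Next I would substitute \eqref{sdw} into the three brackets of \eqref{entropy}, integrate by parts in the terms carrying $\partial_t$ (no $t=0$ boundary terms appear because $\varphi$ vanishes near $t=0$), split the $x$-integrals over the three regions, keep the boundary integrals along $x=c(t)\mp(\tfrac\varepsilon2 t+x_\varepsilon)$ together with the interior integral over the middle strip, and Taylor-expand $\varphi$ about $x=c(t)$ exactly as in \eqref{taylor expansion}; this is line-for-line the manipulation that produced \eqref{EQ2.8} and \eqref{EQ2.11}, now with $(\eta,q)$ in the role of $(v,vf(u))$ and $(vu,vuf(u))$ and with the modified source. Using $2(\tfrac\varepsilon2 t+x_\varepsilon)v_\varepsilon(t)\to\xi(t)$ and $u_\varepsilon(t)\to\chi(t)$ — so that $2(\tfrac\varepsilon2 t+x_\varepsilon)\tfrac12 v_\varepsilon(t)f^{k}(u_\varepsilon(t))\to\tfrac12\xi(t)f^{k}(\chi(t))$ for $k=2,3$, and the strip source integral yields $\int_0^\infty\kappa(t)\bigl(\chi(t)-u_a(t)\bigr)\xi(t)f(\chi(t))f^{\prime}(\chi(t))\,\varphi(c(t),t)\,\D{t}$ in the limit — I would separate the coefficient of $\varphi(c(t),t)$ from that of $\varphi_x(c(t),t)$. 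The $\varphi_x(c(t),t)$-coefficient is, to leading order, $2(\tfrac\varepsilon2 t+x_\varepsilon)\bigl(\tfrac12 v_\varepsilon f^3(u_\varepsilon)-\dot c(t)\tfrac12 v_\varepsilon f^2(u_\varepsilon)\bigr)\to\tfrac12\xi(t)\bigl(f^3(\chi)-f(\chi)f^2(\chi)\bigr)=0$ because $\dot c(t)=f(\chi(t))$ by the third relation of \eqref{ode} — the same cancellation that removed the $\delta'$-part in \cref{subsection 2}. (Equivalently, one may pass to the limit first, identify $U^\varepsilon$ as converging to the delta shock carried by $x=c(t)$ with concentrated mass $\xi(t)$ and velocity $\chi(t)$, and compute $\partial_t\eta+\partial_x q-\kappa(u_a-u)vff^{\prime}$ directly for that delta shock; the $\delta'$-term vanishes for the same reason.)

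Either way the limit is a measure carried by the line $x=c(t)$; after a careful accounting of signs I expect it to equal $-\int_0^\infty\Delta(t)\,\varphi(c(t),t)\,\D{t}$ (up to a positive constant), where
\begin{equation*}
\Delta(t):=\Bigl(f(\chi(t))[vf^2(U(t))]-[vf^3(U(t))]\Bigr)-\Bigl(2\xi(t)f(\chi(t))f^{\prime}(\chi(t))\bigl\{\partial_t\chi(t)+\kappa(t)(\chi(t)-u_a(t))\bigr\}+\partial_t\xi(t)\,f^2(\chi(t))\Bigr).
\end{equation*}
Here I have used $[vf^2(U(t))]=2[\eta]$, $[vf^3(U(t))]=2[q]$, $\dot c=f(\chi)$, and the product rule $\partial_t\bigl(\xi f^2(\chi)\bigr)=f^2(\chi)\,\partial_t\xi+2\xi f(\chi)f^{\prime}(\chi)\,\partial_t\chi$ to assemble the $\partial_t\xi$- and $\partial_t\chi$-contributions into the bracket appearing in \eqref{dissipative}. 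Since $\varphi$ ranges over all non-negative test functions, the shadow wave is dissipative if and only if $\Delta(t)\ge0$ for a.e.\ $t>0$, and this is precisely the inequality \eqref{dissipative}.

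The only genuine difficulty I anticipate is the bookkeeping in the middle step: keeping track of which substituted quantities are $\mathcal O(\varepsilon)$, $\mathcal O(1)$ or $\mathcal O(1/\varepsilon)$, getting every sign right through the integrations by parts, and confirming that the limiting entropy defect is a bona fide Radon measure supported on $x=c(t)$ with no derivative-of-delta component — the last point again reducing to $\dot c=f(\chi)$.
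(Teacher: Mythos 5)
Your proposal is correct and follows essentially the same route as the paper: substitute the shadow wave \eqref{sdw} into \eqref{entropy}, use the classical identity $\partial_t\eta+\partial_x q=\kappa(t)(u_a(t)-u)vf(u)f'(u)$ on the outer regions (the paper encodes this via $\dot U_{l,r}=\kappa(u_a-U_{l,r})$), Taylor-expand $\varphi$ about $x=c(t)$, observe that the $\varphi_x$-coefficient vanishes because $\dot c=f(\chi)$, and read off the limiting defect measure on $x=c(t)$, which after the product rule for $\partial_t(\xi f^2(\chi))$ gives exactly \eqref{dissipative}. The only cosmetic difference is that the paper arrives at the intermediate inequality \eqref{diss} before expanding the time derivative, whereas you assemble the bracket directly.
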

\begin{proof} Inserting the the shadow wave solution \eqref{sdw} and following the similar calculation as previous, we obtain
\begin{align}\label{eqq3.3}
-\left\langle\frac{\partial}{\partial t}\eta(v^{\varepsilon},u^{\varepsilon}), \varphi\right\rangle&=\frac{1}{2}\int_{0}^{\infty}\left(v_{\varepsilon}(t)f^2(u_{\varepsilon}(t))-v_lf^2(U_l(t))\right)\left(\dot{c}(t)-\frac{\varepsilon}{2}\right)\varphi\left(c(t)-\frac{\varepsilon}{2}t-x_{\varepsilon}, t\right)\D{t}\nonumber\\
&+\frac{1}{2}\int_{0}^{\infty}\left(v_rf^2(U_r(t))-v_{\varepsilon}(t)f^2(u_{\varepsilon}(t))\right)\left(\dot{c}(t)+\frac{\varepsilon}{2}\right) \varphi\left(c(t)+\frac{\varepsilon}{2}t+x_{\epsilon}, t\right)\D{t}\nonumber\\
&-\int_{0}^{\infty}\int_{-\infty}^{c(t)-\frac{\varepsilon}{2}t-x_{\varepsilon}}\kappa(t)(u_a(t)-u_l)v_lf(U_l(t))f^{\prime}(U_l(t))\varphi(x,t)\D{x}\D{t} \nonumber\\
&-\frac{1}{2}\int_{0}^{\infty}\int_{c(t)-\frac{\varepsilon}{2}t-x_{\varepsilon}}^{c(t)+\frac{\varepsilon}{2}t+x_{\varepsilon}}\frac{\partial}{\partial t}\Big(v_{\epsilon}(t)f^2(u_{\epsilon}(t))\Big)\varphi(x,t)\D{x}\D{t}\nonumber\\
&-\int_{0}^{\infty}\int_{c(t)+\frac{\varepsilon}{2}t+x_{\varepsilon}}^{\infty}\kappa(t)(u_a(t)-u_r)v_rf(U_r(t))f^{\prime}(U_r(t))\varphi (x,t)\D{x}\D{t}, 
\end{align}
where in  the third and fifth lines we used 
\[\dot{U}_{l,r}(t)=\kappa(t)\left(u_{a}(t)-U_{l,r}(t)\right).\] Similarly,
\begin{align}\label{eqq3.4}
    -\left\langle\frac{\partial}{\partial x}q(v^{\varepsilon},u^{\varepsilon}), \varphi\right\rangle&=\frac{1}{2}\int_{0}^{\infty}\left(v_lf^3(U_l(t))-v_{\varepsilon}(t)f^3(u_{\varepsilon}(t))\right)\varphi\left(c(t)-\frac{\varepsilon}{2}t-x_{\varepsilon},t\right)\D{t}\nonumber\\
    &+\frac{1}{2}\int_0^{\infty}\left(v_{\varepsilon}(t)f^3(u_{\varepsilon}(t))-v_rf^3(U_r(t))\right)\varphi\big(c(t)+\frac{\varepsilon}{2}t+x_{\varepsilon},t\big)\D{t}.
\end{align}
Now using the Taylor expansions \eqref{taylor expansion} in \eqref{eqq3.3}-\eqref{eqq3.4} and considering the contributions from the source term of \eqref{entropy}, we find
	\begin{align}\label{eqq3.5}
	&\frac{1}{2}\int_{0}^{\infty}\Bigg[\left(\dot{c}(t)[vf^2(U(t))]+\frac{\varepsilon}{2}\left(v_rf(U_r(t))+v_lf^2(U_l(t))\right)\right)-\frac{\partial}{\partial t}\left(2\left(\frac{\varepsilon}{2}t+x_{\varepsilon}\right)v_{\varepsilon}(t)f^2(u_{\varepsilon}(t))\right)\nonumber\\
	&-[vf^3(U(t))]+4\kappa(t)\left(\frac{\varepsilon}{2}t+x_{\varepsilon}\right)(u_{a}(t)-u_{\varepsilon}(t))v_{\varepsilon}(t)f(u_{\varepsilon}(t))f^{\prime}(u_{\varepsilon}(t))\Bigg]\varphi(c(t),t)\D{t} \nonumber\\
	&+\frac{1}{2}\int_{0}^{\infty}\Bigg[c^{\cdot}(t)\left(v_rf^2(U_r(t))+v_lf(U_l(t))-2v_{\varepsilon}(t)f^2(U_{\varepsilon}(t))\right)\nonumber\\
	&+\left(2v_{\varepsilon}(t)f^3(u_{\varepsilon}(t))-v_lf^3(U_l(t))-v_rf^3(U_r(t))\right)\left(\frac{\varepsilon}{2}t+x_{\varepsilon}\right)\Bigg]\frac{\partial}{\partial x}\varphi(c(t),t)\D{t}.
	\end{align}
From the equation \eqref{entropy}, we have
	\begin{align}\label{eq3.3}
	\lim_{\varepsilon \to 0}\int_{0}^{\infty}\int_{-\infty}^{\infty}\left(\eta(v^{\varepsilon}, u^{\varepsilon})\frac{\partial}{\partial t}\varphi(x,t)+q(v^{\varepsilon}, u^{\varepsilon})\frac{\partial}{\partial t}\varphi(x,t)\right)\D{x}\D{t}\nonumber\\
 + \int_{0}^{\infty}\int_{-\infty}^{\infty}\kappa(t)(u_a(t)-u^{\varepsilon})f(u^{\varepsilon})f^{\prime}(u^{\varepsilon})v^{\varepsilon}\varphi(x,t)\D{x}\D{t}\geq 0.
	\end{align}
	
Now passing to the limit as $\varepsilon \to 0$, from \eqref{eqq3.5} and \eqref{eq3.3}, we obtain
\begin{equation}\label{diss}
\frac{\partial}{\partial t}\left(\xi(t)f^2(\chi(t))\right)\leq \dot{c}(t)[vf^2(U(t))]-[vf^3(U(t))] + 2\kappa(t)\left(u_{a}(t)-\chi(t)\right)\xi(t)f(\chi(t))f^{\prime}(\chi(t)),
\end{equation}
and simplifying we get the inequality \eqref{dissipative}.
\end{proof}

\vspace{.5cm}
\begin{lemma}
	Let $f^2$ be a convex function, then a shadow wave solution $U^{\varepsilon}$ is dissipative if and only if it satisfies the overcompressibility condition.
\end{lemma}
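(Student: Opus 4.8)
The plan is to start from the dissipation inequality \eqref{dissipative} (equivalently \eqref{diss}) and feed in the Rankine--Hugoniot relations \eqref{ode}, which should collapse the entropy defect into an explicit pointwise-in-$t$ expression involving only $v_l,v_r$, $U_l,U_r$ and $\chi$. First I would use the third relation of \eqref{ode} to replace $\dot c$ by $f(\chi)$; then, expanding $\frac{\partial}{\partial t}(\xi\chi)$, the second relation gives $\xi\{\dot\chi+\kappa(t)(\chi-u_a(t))\}=f(\chi)[vU]-[vUf(U)]-\dot\xi\,\chi$, and the first gives $\dot\xi=f(\chi)[v]-[vf(U)]$. Substituting these three identities into \eqref{dissipative} and grouping the outcome as a linear combination of $v_l$ and $v_r$, I expect the inequality to reduce to
\[
v_l\,\bigl(f(\chi)-f(U_l)\bigr)\,D(U_l,\chi)\ \le\ v_r\,\bigl(f(\chi)-f(U_r)\bigr)\,D(U_r,\chi),
\]
where, writing $g:=f^2$, the quantity $D(s,\chi):=g(s)-g(\chi)-g'(\chi)(s-\chi)$ is the tangent--line remainder of $g$ at $\chi$, and all functions are evaluated at the same time $t$. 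The main obstacle is precisely this algebraic reduction: the terms carrying $f'(\chi)$ and $\chi f'(\chi)$ have to recombine with the jumps $[vf^2(U)]$ and $[vf^3(U)]$ into exactly the remainders $D(U_l,\chi)$ and $D(U_r,\chi)$, and keeping careful track of the signs in that cancellation is the delicate point; everything else is routine.

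Granting the displayed reduction, convexity of $f^2$ enters only as the two inequalities $D(U_l,\chi)\ge 0$ and $D(U_r,\chi)\ge 0$. To see that overcompressibility forces dissipativity, suppose $f(U_r(t))<f(\chi(t))<f(U_l(t))$; since $v_l,v_r\ge 0$ and $D\ge 0$, the left-hand side of the displayed inequality is $\le 0$ while the right-hand side is $\ge 0$, so \eqref{dissipative} holds and $U^\varepsilon$ is a dissipative shadow wave. (When $\chi(t)$ coincides with $U_l(t)$ or $U_r(t)$ the corresponding product vanishes, which is compatible with the non-strict form of \eqref{entropy}.) For the converse it suffices to recall that, by \cref{RPL} and \cref{IL}, every shadow wave solution of \eqref{system dependent on t} already satisfies $U_r(t)<\chi(t)<U_l(t)$ --- this is exactly what was verified when $\chi$ was identified as the unique zero of $\cK$ in the Riemann case, and when $c(t)$ was shown to remain in the region $\sD$ in the interaction case --- so overcompressibility holds automatically, and in particular a dissipative shadow wave is overcompressive. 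Hence the two conditions are equivalent.

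Finally, the same line of argument carries over to the two-phase model \eqref{s7eq1}: one starts from the corresponding entropy pair together with the Rankine--Hugoniot system \eqref{s7eq8}--\eqref{s7eq2.27}, replaces $v$ by $v+w$ throughout, and obtains the analogue of the displayed reduction, so that the equivalence of dissipativity and overcompressibility persists there as well.
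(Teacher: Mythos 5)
Your main reduction coincides with the paper's: substituting the relations \eqref{ode} into \eqref{diss} and expanding $f^2$ about $\chi(t_0)$ — your tangent-line remainder $D(\cdot,\chi)$ is exactly the function $g$ that the paper introduces via convexity, with $g\ge 0$ and $g(\chi(t_0))=g'(\chi(t_0))=0$ — produces precisely the paper's inequality \eqref{entropy over}, and the implication ``overcompressive $\Rightarrow$ dissipative'' is then the same sign argument ($v_{l,r}\ge 0$, $g\ge 0$, and the signs of $f(\chi)-f(U_{l,r})$). The one place you genuinely diverge is the converse. The paper disposes of it by appeal to the arguments of \cite{marko-manas}, i.e.\ by working back through \eqref{entropy over}; you instead argue that every shadow wave produced by \cref{RPL} and \cref{IL} is overcompressive by construction, so that ``dissipative $\Rightarrow$ overcompressive'' holds vacuously. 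That is sound for the waves actually built in \cref{sec2}, but it does not prove the lemma as a statement about an arbitrary shadow wave ansatz (for which $\chi$ need not a priori lie in $(U_r,U_l)$), and it trivializes the equivalence rather than establishing it: the intended argument is to show that each failure mode of overcompressibility forces a violation of \eqref{entropy over}. Since the paper itself is terse here, this is a difference in route rather than an outright error, but you should be aware that your converse carries no independent content.
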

\begin{proof}Using convexity of $f^2$, we have a function $g$ such that
	\begin{align}\label{convexity}
	f^2(x)=f^2(x_0)+2(x-x_0)f(x_0)f^{\prime}(x_0)+g(x)
	\end{align}
	with $g\geq 0$ and $g(x_0)=g^{\prime}(x_0)=0.$
Inserting \eqref{convexity} into the inequality \eqref{diss} and taking $x=\chi(t), x_0=\chi(t_0)$ we have
	\begin{align}
	&\frac{\partial}{\partial t}\left(\xi(t)\left[2f(\chi(t_0))f^{\prime}(\chi(t_0))\chi(t)+f^2(\chi(t_0))-2f(\chi(t_0))f^{\prime}(\chi(t_0))\chi(t_0)+ g(\chi(t))\right]\right)\Bigg|_{t=t_0}\nonumber\\
	&\leq f(\chi(t_0)) [vf^2(U(t_0))]-[vf^3(U(t_0))] + 2\kappa(t_0)\left(u_{a}(t_0)-\chi(t_0)\right)\xi(t_0)f(\chi(t_0))f^{\prime}(\chi(t_0)).\nonumber
	\end{align}
	Since $g(\chi(t_0))=g^{\prime}(\chi(t_0))=0$, we find $\frac{\partial}{\partial t}(\xi(t)g(\chi(t)))\Big|_{t=t_0}=0$ and the above inequality takes the form
	\begin{equation*}
	\begin{aligned}
	&2f(\chi(t_0))f^{\prime}(\chi(t_0)) \frac{\partial}{\partial t}\left(\xi(t)\chi(t)\right)\Big|_{t=t_0}+\left(f^2(\chi(t_0))-2f(\chi(t_0))f^{\prime}(\chi(t_0))\chi(t_0) \right) \frac{\partial}{\partial t}\xi(t)\Big|_{t=t_0}\\
	&\leq f(\chi(t_0)) [vf^2(U(t_0))]-[vf^3(U(t_0))] + 2\kappa(t_0)(u_{a}(t_0)-\chi(t_0))\xi(t_0)f(\chi(t_0))f^{\prime}(\chi(t_0)).
	\end{aligned}
	\end{equation*}
	Now using the ODE \eqref{ode} and simplifying the above inequality, we find
	\begin{equation}\label{entropy over}
	v_r\Big(f(\chi(t_0))-f(U_r (t_0))\Big)g(U_r (t_0))-v_l\Big(f(\chi(t_0))-f(U_l (t_0))\Big)g(U_l (t_0))\geq 0.
	\end{equation}
	If the shadow wave is overcompressive, non-negativity of $v_{l,r}$ and positivity of $g$ implies that the above inequality \eqref{entropy over} holds, i.e., the shadow wave is dissipative for any $t_0$. The other way is not difficult to see and follows from the arguments of \cite{marko-manas}.
\end{proof}
\begin{remark}
 In the above proof, we assumed that $f\in C^1$ and $f^2$ be convex, but there is no assumption on the  sign of $f^{\prime}$,  which is assumed to be positive in the previous section. $f^{\prime}> 0$ becomes an admissible assumption if we consider the relation of the system \eqref{system dependent on t} with scalar conservation laws. As we mentioned earlier for the smooth solution the system \eqref{system dependent on t} reduced to
\[u_t+f(u)u_x=\kappa(t)(u_{a}(t)-u)\] which is of the form $u_t+F(u)_x=h(x,t,u)$ with $F^{\prime}(u)=f(u).$ For scalar conservation laws convexity condition $F^{\prime \prime}(u)=f^{\prime}(u) > 0$ on flux function is useful to obtain explicit formulas. Let us define the sets $S_1:=\left\{f \,\,|\,\, f \,\,\,\text{is}\,\,\, C^1 \,\,\,\text{and}\,\,\, f^{\prime}> 0\right\}$ and $S_2:=\left\{f\,\, |\,\, f^2\,\,\, \text{is convex}\right\}$. We observe that there is no containment relation between $S_1$ and $S_2.$ For instance, $S_1\not\subset S_2:$ let $f(x)=\log x,$ $f^{\prime}=\frac{1}{x}> 0$ for $x>0$ and $(f^2)^{\prime\prime}(x)=\frac{2}{x^2}(1-\log x) <0$ for some $x>0.$ Again $S_2\not\subset S_1:$ let $f(x)=-x,$ then $f^2=x^2$ which is convex but $f^{\prime}=-1<0.$ Hence in this section we are working with the class of functions $S_1\cap S_2.$
\end{remark}
\section{Global existence of entropy solution }\label{sec4}
In this section, we begin with a concise description of the shadow wave tracking procedure. Subsequently, we give the proofs for  \cref{TH1.1} and \cref{TH1.2}.
\subsection{The wave tracking algorithm} Let us introduce the shadow wave tracking algorithm. To begin, we establish the notations primarily borrowed from \cite{RN21}.
\begin{enumerate}
    \item [1.] For any two given states $(v_i, u_i)$ and $(v_j, u_j)$ with $i<j, v_i, v_j >0,$ $SDW_{i,j}$ denotes a shadow wave solution joining $(v_i,u_i)$ on the left and $(v_j, u_j)$ on the right. Note that $SDW_{i,j}$ exists if $u_i>u_j.$
    \item [2.] For any given three states $(v_i, u_i), (v_m, u_m), (v_j, u_j)$ with $i<m<j, v_i, v_m, v_j >0.$ Suppose the states $(v_i, u_i)$ and $(v_m, u_m)$ are connected by contact discontinuities $CD^i_1, CD^m_2$ and a vacuum $Vac_{i,m}=(0, u_i(x,t))$ and the states $(v_m, u_m)$ and $(v_j, u_j)$ are connected by a shadow wave $SDW_{m,j}.$ Now at time $t=T,$ if the states $(0, u_i(x, T))$ on the left and $(v_j, u_j)$ on the right are connected by a shadow wave, we denote it as $~^i\!{SDW}_j.$ Note that $~^i\!{SDW}_m=CD^m_2.$
    \item [3.] In a similar situation described above, $SDW^j_i$ denotes the shadow wave joining the states $(v_i, u_i)$ on the left to the state $(0, u_j(x,t))$ on the right.
    \item [4.] Finally $~^i\!{SDW}^j$ denotes a shadow wave joining the state $(0, u_i(x,t))$ on the left to a state $(0, u_j(x,t))$ on the right.
\end{enumerate}

Now we describe the wavefront tracking algorithm. We take the initial data for the system \eqref{system dependent on t} as 
\begin{align}\label{initial data approx}
    (v, u)(x,0)=\begin{cases}
        (v_0, u_0),\,\,\,\,\,\, &x\leq R,\\
        (v(x), u(x)),\,\,\,\,\,\, &x>R.
    \end{cases}
\end{align}
Here $v_0, u_0 \in \RR,$ $v_0>0$ and $v(x)>0, u(x) \in C_b\left([R,\infty)\right).$ Let $\varepsilon$ be given sufficiently small positive real number. Define $((v^{\varepsilon}(x), u^{\varepsilon}(x)))_{\varepsilon}$ as a piecewise constant approximation for $(v(x), u(x)).$ For a fixed $\varepsilon >0,$ take a corresponding partition $R:=Y_0<Y_1<Y_2 \cdot \cdot \cdots$ of the interval $[R, \infty)$ that satisfies $Y_i-Y_{i+1} \leq \rho(\varepsilon), i=0,1,2, \cdot\cdot\cdots$ where $\rho(\varepsilon)\to 0$ as $\varepsilon \to 0.$ Now choose $v_{i+1}:=v^{\varepsilon}(x)=v(Y_{i+1})$ and $u_{i+1}:=u^{\varepsilon}(x)=u(Y_{i+1})$ for $x \in (Y_i, Y_{i+1}], i \in \NN \cup \{0\}.$

\noindent\textbf{Algorithm}:

 \noindent\textbf{Step 0:} Let $v_0, u_0$ be given as in \eqref{initial data approx} and $\left(\{v_i\}, \{u_i\}\right)_{i\in \NN}$ be a piecewise constant approximation obtained by the procedure describes as above. Therefore an approximation of the initial data \eqref{initial data approx} can be expressed as follows:
 \begin{align}\label{initial data approx 2}
    U_i =(v, u)(x,0)=\begin{cases}
        (v_i, u_i),\,\,\,\,\,\, &x\leq Y_i,\\
        (v_{i+1}, u_{i+1}),\,\,\,\,\,\, &x>Y_i,
    \end{cases}
\end{align}
for $i=0,1,2, \cdot\cdot\cdots$.

\noindent\textbf{Step 1:} Let $S_0$ denotes the collection of all states at time $t=T_0,$ i.e., $S_0=\{U_i \,\,\,\,|\,\,\,\, i \in I_0\}$ where $U_i$'s are given by \eqref{initial data approx 2} and $I_0=\{0,1,2,3, \cdot\cdots\}$ be an index set. A shadow wave solution of the Riemann problem consisting of the states in $S_0$ gives rise to two possibilities: either there is no further interaction of waves or two (possibly more) waves interact at time $t=T_1.$ If there is no further interaction then the procedure has to be stopped at $t=T_0.$ In the event of wave interaction, there are four possible ways in which the interaction can occur, as described above. Each interaction leads to a single resulting shadow wave. The new resulting waves and the non-interacted waves together give a new collection of initial states $S_1=\{U_i\,\,\,\,|\,\,\,\, i\in I_1\}$ where $I_1\subset I_0$ is a new index set for $t\geq T_1.$ 

\noindent \textbf{Step j to j+1:} Suppose that $j$-th interaction happens at a time $t=T_j$. Then eliminating all the middle states from $S_{j-1},$ we obtain a new collection of states $S_j=\{U_i\,\,\,\, |\,\,\,\, i\in I_j\}$ where $I_j=\{0,j_1, j_2,\cdots\cdots\}\subseteq I_{j-1}, 1\leq j_1< j_2<\cdots$ denotes the index set at $j$-th level. All non-interacting waves continue to propagate after $t>T_j$. We can repeat the procedure by substituting $j+1$ in place of $j$ after a new interaction at $t=T_{j+1}.$ The algorithm finishes if there is no such $T_{j+1}$.
 
 The above-stated procedure will help us to prove the global existence of an admissible solution to the problem \eqref{system dependent on t} and \eqref{initial data approx}.
\subsection{Proof of \cref{TH1.1}} Let us consider the initial data 
\begin{equation*}
    \begin{aligned}
    (v, u)(x,0)=
    \begin{cases}
    (v_l, u_l),\,\,\,\,\,\, &x<X_1,\\
    (v_m, u_m),\,\,\,\,\,\, &X_1<x<X_2,\\
    (v_r, u_r),\,\,\,\,\,\, &x>X_2,
    \end{cases}
    \end{aligned}
\end{equation*}
with $u_l>u_m>u_r$. Then two shadow waves are emanating from $X_1$ and $X_2$ with the central shadow wave line $x(t)=X_1+c(t)$ and $x(t)=X_2+c(t)$. Let $x_1(t)=X_1+c(t)+\frac{\varepsilon}{2}t+x_{1,\varepsilon}$ and $x_2(t)=X_2+c(t)-\frac{\varepsilon}{2}t-x_{2,\varepsilon}$ are the right external shadow wave line and left external shadow wave line, respectively. Suppose $T_{\varepsilon}$ is the time when two external shadow wave lines interact and $T$ is the time when two central shadow wave lines interact. Then $T-T_{\varepsilon}=\mathcal O(\varepsilon)$. In summary of the aforementioned fact, we can state the following lemma.
 \begin{lemma}[\cite{RN21}, Lemma 3.2]\label{LM4.1}
Let two approaching shadow waves with the central lines given by $x=c_l(t)$
and $x=c_r(t)$ interact at time $t=\tilde{T}$. The value of $\tilde{T}$ is obtained by solving the equation
\begin{align*}
c_l(t)+\frac{\varepsilon}{2}(t-T_l)+x_{l, \varepsilon}=c_r(t)-\frac{\varepsilon}{2}(t-T_r)+x_{r, \varepsilon}
\end{align*}
where $c_l(t)+\frac{\varepsilon}{2}(t-T_l)+x_{l, \varepsilon}$ is the right external SDW line of the first approaching shadow wave, while $c_r(t)-\frac{\varepsilon}{2}(t-T_r)+x_{r, \varepsilon}$ is the left external SDW line of the second approaching shadow wave. Also, let $x_{l, \varepsilon}, x_{r, \varepsilon}\sim \varepsilon$. A solution T to $c_l(t)=c_r(t)$ will be called the interaction time since the area bounded by two external shadow wave lines, and the lines $t = T$ and $t=\tilde{T}$ is of order $\varepsilon^2$ and all terms of order $\varepsilon^\alpha, \alpha>1$ are neglected. Note that $T=\tilde{T}+\mathcal{O}(\varepsilon)$. The assertion stays true if one of the shadow waves is substituted by a contact discontinuity.
\end{lemma}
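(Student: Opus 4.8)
The plan is to treat the $\varepsilon$–dependent external shadow–wave lines as an $\mathcal O(\varepsilon)$–perturbation of the two central curves and to locate the interaction time by a transversality argument. First I would set $\phi(t)\df c_l(t)-c_r(t)$. Since $f\in C^1$ and the velocity traces $\chi_l(\cdot),\chi_r(\cdot)$ that drive the central speeds $\dot c_l(t)=f(\chi_l(t))$, $\dot c_r(t)=f(\chi_r(t))$ are $C^1$ (established in \cref{subsection 2}), the function $\phi$ is $C^1$ on the time horizon under consideration. As long as the two shadow waves coexist, the overcompressibility relations of \cref{sec2} force $\chi_l(t)>\chi_r(t)$ — the two traces are separated by the velocity of the intermediate state — so by strict monotonicity of $f$ one has $\phi'(t)=f(\chi_l(t))-f(\chi_r(t))>0$; in particular $\phi'(\tilde T)>0$, and by continuity there are $\delta>0$, $c_0>0$ with $\phi'\ge c_0$ on $[\tilde T-\delta,\tilde T+\delta]$.

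Next I would substitute $t=\tilde T$ into the equation defining $\tilde T$, which after rearrangement reads
\begin{align*}
\phi(\tilde T)=-\tfrac{\varepsilon}{2}(\tilde T-T_l)-\tfrac{\varepsilon}{2}(\tilde T-T_r)+x_{r,\varepsilon}-x_{l,\varepsilon}.
\end{align*}
On a finite time interval $T_l,T_r,\tilde T$ are bounded and $x_{l,\varepsilon},x_{r,\varepsilon}\sim\varepsilon$, so the right–hand side, and hence $\phi(\tilde T)$, is $\mathcal O(\varepsilon)$. Since $\phi$ is strictly increasing on $[\tilde T-\delta,\tilde T+\delta]$ with $|\phi(\tilde T)|=\mathcal O(\varepsilon)$, for $\varepsilon$ small enough $\phi$ has a unique zero $T$ in that interval — this is the interaction time in the sense of the statement — and the mean value theorem gives $|T-\tilde T|\le|\phi(\tilde T)|/c_0=\mathcal O(\varepsilon)$, that is, $T=\tilde T+\mathcal O(\varepsilon)$.

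For the area bound I would introduce the horizontal gap
\begin{align*}
w(t)=-\phi(t)-\tfrac{\varepsilon}{2}\bigl(2t-T_l-T_r\bigr)+x_{r,\varepsilon}-x_{l,\varepsilon}
\end{align*}
between the right external line of the first wave and the left external line of the second wave. By the definition of $\tilde T$, $w(\tilde T)=0$ (and $w(T)=\mathcal O(\varepsilon)$ since $\phi(T)=0$); as $w$ is Lipschitz with an $\mathcal O(1)$ constant and the interval between $T$ and $\tilde T$ has length $\mathcal O(\varepsilon)$ by the previous step, $\sup|w|=\mathcal O(\varepsilon)$ on it. Hence the region enclosed by the two external lines and the horizontal segments $t=T$, $t=\tilde T$ has area $\int|w(t)|\,\D t\le|T-\tilde T|\cdot\sup|w|=\mathcal O(\varepsilon^2)$, which justifies discarding contributions of order $\varepsilon^\alpha$, $\alpha>1$, produced by this sliver. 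If one of the two waves is a contact discontinuity $x=\int_0^tf(U(s))\,\D s$, the same computation applies verbatim with the corresponding one–sided slope $\varepsilon/2$ and the relevant $x_{\cdot,\varepsilon}$ set to $0$.

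The only genuinely substantive point should be the transversality $\phi'(\tilde T)\ne 0$: the remainder is bookkeeping with $\mathcal O(\varepsilon)$ terms, but the fact that the colliding central lines cross at a nonzero angle — equivalently, that the merging shadow waves have strictly ordered speeds — is what makes the mean–value step legitimate, and it is precisely where the strict monotonicity of $f$ together with the overcompressibility established in \cref{sec2} enters. A secondary technical point is the uniformity of the $\mathcal O(\varepsilon)$ estimates, which holds because everything is carried out over a fixed finite time horizon on which the states, the curves $c_{l,r}$ and the shift parameters are uniformly controlled.
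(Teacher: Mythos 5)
Your argument is correct. The paper itself offers no proof of this lemma---it is imported essentially verbatim from \cite{RN21}, Lemma 3.2---so there is nothing to contrast with; your transversality-plus-mean-value argument is a sound standalone justification, and you correctly isolate the one substantive ingredient: overcompressibility of the two merging shadow waves combined with the strict monotonicity of $f$ forces $\dot c_l(t)-\dot c_r(t)=f(\chi_l(t))-f(\chi_r(t))$ to be bounded away from zero near the collision, which is exactly what makes both the $\mathcal O(\varepsilon)$ estimate on $T-\tilde T$ and the $\mathcal O(\varepsilon^2)$ area bound for the sliver between the external lines go through.
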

The next lemma is the first step towards proving \cref{TH1.1}. We will be using \cref{LM4.1} in the following proofs without mentioning it repeatedly.
 \begin{lemma}\label{LM3.2}
     Let $v(x)>0, u(x) \in L^{\infty}([R, \infty))\cap C([R, \infty))$ and $u(x)$ be an increasing function on $[R, \infty)$. Take a partition $\{Y_i\}_{i\in \NN \cup \{0\}}$ of $[R, \infty)$ such that $Y_0=R, C_1 \varepsilon^{\alpha}<Y_i-Y_{i+1}<C_2 \rho(\varepsilon)$ for every $i=0,1,2,\cdot\cdots$ where $C_1, C_2\geq 1, \alpha\in (0, 1)$ and $\rho(\varepsilon) \to 0$ as $\varepsilon \to 0.$ Then there exists a global admissible solution to \eqref{system dependent on t} and \eqref{initial data approx}. More precisely, there exists a function $U^{\varepsilon}=(v^{\varepsilon}, u^{\varepsilon})$ that satisfies
     \begin{align}\label{E1}
         \lim_{\varepsilon \to 0}\begin{cases}
             \left\langle \frac{\partial}{\partial t}v^{\varepsilon},  \varphi\right\rangle + \left\langle \frac{\partial}{\partial x}(v^{\varepsilon}f(u^{\varepsilon})),  \varphi\right\rangle=0,\\
             \left\langle \frac{\partial}{\partial t}(v^{\varepsilon} u^{\varepsilon}),  \varphi\right\rangle + \left\langle \frac{\partial}{\partial x}(v^{\varepsilon} u^{\varepsilon} f(u^{\varepsilon})),  \varphi\right\rangle=\left\langle \kappa(t)(u_{a}(t)-u^{\varepsilon})v^{\varepsilon}, \varphi \right\rangle,
         \end{cases}
     \end{align}
     for every test function $\varphi \in C^{\infty}_c\left(\RR \times [0, \infty)\right)$ and the admissibility condition.
 \end{lemma}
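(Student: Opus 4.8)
The plan is to feed the monotone data into the shadow wave tracking algorithm of \cref{sec4} and show that, because $u$ is increasing, at most one shadow wave is ever present, so the algorithm never stalls. Discretize $(v,u)$ on $[R,\infty)$ along the partition $\{Y_i\}$ as in \eqref{initial data approx 2}, with $v_i:=v(Y_i)>0$ and $u_i:=u(Y_i)$; since $u$ is increasing, $u_1\le u_2\le\cdots$. By \cref{RPL}, at each interface $Y_i$ with $i\ge 1$ the Riemann solution is a pair of contact discontinuities bounding a vacuum, travelling with speeds $f(U_i(t))$ and $f(U_{i+1}(t))$; since the flow $u\mapsto U(t)$ of $\dot U=\kappa(t)(u_a(t)-U)$ is order preserving (indeed $U_i(t)-U_j(t)=e^{-\int_0^t\kappa(\theta)\D\theta}(u_i-u_j)$) and $f$ is increasing, these speeds are non-decreasing in $i$, and two adjacent Riemann fans share the intermediate speed $f(U_i(t))$, so none of these elementary waves ever cross. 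Hence the only interface that can emit a shadow wave is $Y_0=R$, and only if $u_0>u_1$.

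If $u_0\le u_1$ there is no shadow wave at all: $U^\varepsilon$ is the concatenation of the Riemann fans, the algorithm terminates already at \textbf{Step 1}, $U^\varepsilon$ is an exact weak solution of \eqref{system dependent on t} off the vacuum edges, and \eqref{entropy} holds with equality on every contact discontinuity and vacuum region, so letting $\varepsilon\to0$ gives \eqref{E1} and admissibility. If $u_0>u_1$, exactly one shadow wave $SDW_{0,1}$ leaves $R$, with all the contact discontinuities and vacua of the previous paragraph to its right. By the overcompressibility proved in \cref{subsection 2} (cf.\ \cref{IL}) its central speed obeys $f(U_1(t))<f(\chi(t))<f(U_0(t))$, hence it is strictly faster than the adjacent contact discontinuity $CD_1^1$; if it overtakes it, the collision time is read off \cref{LM4.1}, and solving the resulting Riemann problem --- a shadow wave impinging on a contact discontinuity bounding a vacuum --- again produces a \emph{single} shadow wave: an $SDW^{\,i}_0$ propagating into the vacuum with left state $(v_0,U_0(t))$, and, after it clears the vacuum, an $SDW_{0,i}$ with left state $(v_0,U_0(t))$ and right state $(v_i,U_i(t))$, which is overcompressive precisely when $u_0>u_i$ (and when $u_0\le u_i$ the wave can no longer overtake anything ahead of it, by monotonicity of $f$). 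Thus at every stage the configuration is one shadow wave followed on the right only by contact discontinuities and vacua of non-decreasing speed; each interaction strictly removes waves, so the index sets $I_j$ of the algorithm strictly shrink and at most one shadow wave is present for all $t$.

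The crux is that the interaction times do not accumulate on any $[0,T]$. Since $u\in L^\infty$ and $\kappa,u_a$ are locally bounded, the speeds $f(U_i(t))$ are bounded by some $M_T$ on $[0,T]$, so the shadow wave travels at most $M_TT$ there and can overtake only $\mathcal O\big(M_TT/(C_1\varepsilon^\alpha)\big)$ of the interfaces $Y_i$, their spacing being $\ge C_1\varepsilon^\alpha$; by \cref{LM4.1} each overtaking occurs at a time $T_j=\tilde T_j+\mathcal O(\varepsilon)$ with $\tilde T_{j+1}-\tilde T_j$ bounded below in terms of the fixed gap $Y_{i+1}-Y_i$ and the uniform speed gap $f(U_l(t))-f(U_r(t))>0$. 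Hence only finitely many interactions occur on $[0,T]$, the algorithm runs for all $t>0$, and $U^\varepsilon$ is globally defined. Admissibility follows because every shadow wave so constructed is overcompressive, hence dissipative by the equivalence of overcompressibility and dissipativity established in \cref{sec3}, while the contact and vacuum parts satisfy \eqref{entropy} with equality.

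Finally, pass to the limit: for $\varphi\in C^\infty_c(\RR\times[0,\infty))$ with $\supp\varphi\subseteq[-N,N]\times[0,T]$, the contact discontinuities and vacua meeting $\supp\varphi$ are exact weak solutions and contribute nothing to the discrepancy in \eqref{E1}; the single shadow wave contributes an $\mathcal O(\varepsilon)$ error by the computations of \cref{subsection 2} (the residual terms after imposing the ODEs \eqref{ode}), and the finitely many interaction regions in $\supp\varphi$ each have area $\mathcal O(\varepsilon^2)$ by \cref{LM4.1}, for a total $\mathcal O\big(M_TT/(C_1\varepsilon^\alpha)\big)\cdot\mathcal O(\varepsilon^2)=\mathcal O(\varepsilon^{2-\alpha})$. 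Since $\alpha\in(0,1)$, the grand total $\mathcal O(\varepsilon)+\mathcal O(\varepsilon^{2-\alpha})$ tends to $0$, which yields \eqref{E1}, and the same bookkeeping applied to \eqref{entropy} yields the admissibility inequality; this proves \cref{LM3.2}. The main obstacle is exactly the non-accumulation step, i.e.\ controlling the interaction times via the two-sided mesh condition and \cref{LM4.1}; everything else is bookkeeping of the algorithm on top of the Riemann-level analysis of \cref{sec2}.
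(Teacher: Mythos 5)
Your proposal is correct and follows essentially the same route as the paper: discretize along $\{Y_i\}$, observe that monotone increasing data produces only contact-discontinuity/vacuum fans except possibly at $Y_0$, track the single overcompressive shadow wave through its successive interactions (splitting into the cases $\tilde u\le u_0$ and $\tilde u>u_0$, which you capture via the "stops overtaking when $u_0\le u_i$" remark), bound the number of interactions on $\supp\varphi$ by $\mathcal O(\varepsilon^{-\alpha})$ using the mesh lower bound $Y_{i+1}-Y_i\ge C_1\varepsilon^{\alpha}$, and sum the residuals. The only cosmetic difference is the final bookkeeping — you arrive at $\mathcal O(\varepsilon)+\mathcal O(\varepsilon^{2-\alpha})$ while the paper conservatively records $\mathcal O(\varepsilon^{1-\alpha})$ — but both vanish as $\varepsilon\to0$ since $\alpha\in(0,1)$.
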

 \begin{proof}
   To prove this lemma, we consider two cases. 
     
     \noindent \textbf{Case 1.} When $u_0 \leq u(Y_1).$ Since $u$ is an increasing function on $[R, \infty),$ we get $u_0<u_1<u_2, \cdot\cdot\cdots$ and at each $\{Y_i\}_{i \in \NN\cup \{0\}}$ we have a solution as a combination of contact discontinuities and vacuum, i.e., $CD^i_1+Vac_{i, i+1}+CD^{i+1}_2$ for $i=0,1,2, \cdot\cdot\cdots.$ As the speeds of the fronts do not overlap each other, no interaction occurs in this case.

     \noindent \textbf{Case 2.} When $u_0>u(Y_1).$ Since $u(x)$ is bounded for $x>R,$ we have $\lim\limits_{i\to \infty}u_i=\tilde{u}.$ Then two cases arise.
     
     \noindent \textbf{Subcase 1.} Let $\tilde{u} \leq u_0.$ Let $T_{01}$ be the time when first interaction occurs, i.e., $SDW_{0,1}$ meets $CD^1_1=Y_1+\int_0^t f(U_1(\xi))d\xi.$ Therefore the first step is to study $-\left\langle\frac{\partial}{\partial t} v^{\varepsilon}, \varphi\right\rangle$ in the interval $[0, T_{01}].$ In the interval $[0, T_{0,1}],$ we have
     \begin{align*}
         -\left\langle\frac{\partial}{\partial t} v^{\varepsilon}, \varphi\right\rangle&=\int_0^{T_{01}} \int_{-\infty}^{\infty}v^{\varepsilon} \frac{\partial}{\partial t}\varphi (x,t)\D{x}\D{t}-\int_{-\infty}^{\infty}\left[(v^{\varepsilon}\varphi)(x, T_{01}-)-(v^{\varepsilon}\varphi)(x,0)\right]\D{x}\\
         &=I_0-\int_{-\infty}^{\infty}\left[(v^{\varepsilon}\varphi)(x, T_{01}-)-(v^{\varepsilon}\varphi)(x,0)\right]\D{x}.
     \end{align*}
     Inserting the shadow wave
     \begin{align*}
         U^{\varepsilon}=(v^{\varepsilon}, u^{\varepsilon})=\begin{cases}
             \left(v_0, U_0(t)\right),\,\,\,\,\,\,\,\,\,\,\,\, &x< R+c(t)-\frac{\varepsilon}{2}t,\\
\left(v_{0,\varepsilon}(t), u_{0,\varepsilon}(t)\right),\,\,\,\,\,\, &R+c(t)-\frac{\varepsilon}{2}t<x<R+c(t)+\frac{\varepsilon}{2}t,\\
\left(v_1, U_1(t)\right),\,\,\,\,\,\,\,\,\,\,\, &x>R+c(t)+\frac{\varepsilon}{2}t,
         \end{cases}
     \end{align*}
     in the above equation, we obtain
\begin{align}
    I_0&=\int_{0}^{T_{01}}\int_{-\infty}^{R+c(t)-\frac{\varepsilon}{2}t}v_0\frac{\partial}{\partial t}\varphi(x, t)\D{x}\D{t}
+\int_{0}^{T_{01}}\int_{R+c(t)-\frac{\varepsilon}{2}t}^{R+c(t)+\frac{\varepsilon}{2}t}v_{0, \varepsilon}(t)\frac{\partial}{\partial t}\varphi(x, t)\D{x}\D{t}\nonumber\\
&+\int_{0}^{T_{01}}\int_{R+c(t)+\frac{\varepsilon}{2}t}^{Y_1+\int_{0}^{t}f(U_1(\xi))d\xi}v_1\frac{\partial}{\partial t}\varphi(x, t)\D{x}\D{t}\nonumber\\
&+\sum_{i=1}^{\infty}\int_{0}^{T_{01}}\int_{Y_i+\int_{0}^{t}f(U_i(\xi))d\xi}^{Y_{i+1}+\int_{0}^{t}f(U_{i+1}(\xi))d\xi}v_{i+1}\frac{\partial}{\partial t}\varphi(x, t)\D{x}\D{t} =I_{01}+I_{02}+I_{03}+I_{04}\nonumber.
 \end{align}
\noindent Simplifying the above expressions, we get
\begin{align*}
I_{01}=-\int_{0}^{T_{01}}v_0 \varphi\left(R+c(t)-\frac{\varepsilon}{2}t,t\right) \left(\dot{c}(t)-\frac{\varepsilon}{2}\right)\D{t}+\int_{-\infty}^{R+c(T_{01})-\frac{\varepsilon}{2}T_{01}}v_0 \varphi(x, T_{01}) \D{x}-\int_{-\infty}^{R}v_0\varphi(x, 0)\D{x},
\end{align*}
\begin{align*}
I_{02}=\int_{R+c(T_{01})-\frac{\varepsilon}{2}T_{01}}^{R+c(T_{01})+\frac{\varepsilon}{2}T_{01}}v_{0,\varepsilon}(T_{01}) \varphi\left(x, T_{01}\right)\D{x}-\int_{0}^{T_{01}}v_{0,\varepsilon} (t)\varphi\left(R+c(t)+\frac{\varepsilon}{2}t, t\right) \left(\dot{c}(t)+\frac{\varepsilon}{2}\right)\D{t}\\
+\int_{0}^{T_{01}}v_{0,\varepsilon}(t) \varphi\left(R+c(t)-\frac{\varepsilon}{2}t, t\right) \left(\dot{c}(t)-\frac{\varepsilon}{2}\right)\D{t}-\int_{R+c(t)-\frac{\varepsilon}{2}t}^{R+c(t)+\frac{\varepsilon}{2}t}\frac{\partial}{\partial t}v_{0, \varepsilon}(t)\varphi(x,t)\D{x}\D{t},
\end{align*}
\begin{align*}
I_{03}&=\int_{R+c(T_{01})+\frac{\varepsilon}{2}T_{01}}^{Y_1+\int_{0}^{T_{01}}f(U_1(\xi))d\xi}v_{1}\varphi(x, T_{01})\D{x} -\int_{R}^{Y_1}v_1\varphi(x, 0)\D{x}
+\int_{0}^{T_{01}}v_{1} \varphi\left(R+c(t)+\frac{\varepsilon}{2}t\right) \left(\dot{c}(t)+\frac{\varepsilon}{2}\right)\D{t}\\
&- \int_{0}^{T_{01}}v_{1} \varphi\left(Y_1+\int_{0}^{t}f(U_1(\xi))d\xi, t\right) f\left(U_1(t)\right)\D{t},
\end{align*}
\begin{align*}
&I_{04}=\sum_{i=1}^{\infty}\int_{Y_{i}+\int_{0}^{T_{01}}f(U_{i}(\xi))d\xi}^{Y_{i+1}+\int_{0}^{T_{01}}f(U_{i+1}(\xi))d\xi}v_{i+1}\varphi(x, T_{01})\D{x} -\sum_{i=1}^{\infty}\int_{Y_i}^{Y_{i+1}}v_{i+1}\varphi(x, 0)\D{x}\\
&-\sum_{i=1}^{\infty}\int_{0}^{T_{01}}v_{i+1} \varphi\left(Y_{i+1}+\int_{0}^{t}f(U_{i+1}(\xi))d\xi,t\right) f\left(U_{i+1}(t)\right)\D{t}\\
&+ \sum_{i=1}^{\infty}\int_{0}^{T_{01}}v_{i+1} \varphi\left(Y_i+\int_{0}^{t}f(U_{i+1}(\xi))d\xi,t\right) f\left(U_{i+1}(t)\right)\D{t}.
\end{align*}
Finally adding up $I_{01}, I_{02}, I_{03}$ and $ I_{04},$ we obtain
\begin{align*}
I_0=&\int_{0}^{T_{01}}(v_{0,\varepsilon}(t)-v_0)\varphi\left(R+c(t)-\frac{\varepsilon}{2}t, t\right)\left(\dot{c}(t)-\frac{\varepsilon}{2}\right)\D{t}-\int_{R+c(t)-\frac{\varepsilon}{2}t}^{R+c(t)+\frac{\varepsilon}{2}t}\frac{\partial}{\partial t}v_{0, \varepsilon}(t)\varphi(x,t)\D{x}\D{t}\\
&+\int_{0}^{T_{01}}(v_1-v_{0,\varepsilon}(t))\varphi\left(R+c(t)+\frac{\varepsilon}{2}t, t\right)\left(\dot{c}(t)+\frac{\varepsilon}{2}\right)\D{t}\nonumber
+\int_{-\infty}^{\infty}v^{\varepsilon}\varphi(x, T_{01})\D{x}\\
&-\int_{-\infty}^{\infty}v^{\varepsilon}\varphi(x, 0)\D{x}-\int_{0}^{T_{01}}v_{1}  \varphi\left(Y_1+\int_{0}^{t}f(U_1(\xi))d\xi,t\right)f\left(U_1(t)\right)\D{t}\nonumber\\
&-\sum_{i=1}^{\infty}\int_{0}^{T_{01}}v_{i+1} \varphi\left(Y_{i+1}+\int_{0}^{t}f(U_{i+1}(\xi))d\xi,t\right) f\left(U_{i+1}(t)\right)\D{t}\nonumber\\
&+ \sum_{i=1}^{\infty}\int_{0}^{T_{01}}v_{i+1} \varphi\left(Y_i+\int_{0}^{t}f(U_{i+1}(\xi))d\xi,t\right) f\left(U_{i+1}(t)\right)\D{t}\nonumber.
\end{align*}
A similar calculation for the flux term in $[0,T_{01}]$ gives
\begin{align*}
    -\left\langle \frac{\partial}{\partial x} v^{\varepsilon}f(u^{\varepsilon}), \varphi\right\rangle=&\int_{0}^{T_{01}} \left(v_0 f(U_0(t))-v_{0, \varepsilon}(t)f(u_{0, \varepsilon}(t))\right)\varphi\left(R+c(t)-\frac{\varepsilon}{2}t, t\right)\D{t}\\
&+\int_{0}^{T_{01}} \left(v_{0, \varepsilon}(t)f(u_{0, \varepsilon}(t))-v_1 f(U_1(t))\right)\varphi\left(R+c(t)+\frac{\varepsilon}{2}t, t\right)\D{t}\nonumber\\
&+ \int_{0}^{T_{01}}v_{1} f(U_{1} (t))\varphi\left(Y_1+\int_{0}^{t}f(U_1(\xi))d\xi, t\right) \D{t}\nonumber\\
&+\sum_{i=1}^{\infty}\int_{0}^{T_{01}} v_{i+1} f(U_{i+1} (t))\varphi\left(Y_{i+1}+\int_{0}^{t}f(U_{i+1}(\xi))d\xi, t\right) \D{t}\nonumber\\
&-\sum_{i=1}^{\infty}\int_{0}^{T_{01}} v_{i+1} f(U_{i+1} (t))\varphi\left(Y_{i}+\int_{0}^{t}f(U_{i+1}(\xi))d\xi, t\right)\D{t}\nonumber
\end{align*}
and hence we have
\begin{align}
   -\left\langle\frac{\partial}{\partial t} v^{\varepsilon}, \varphi\right\rangle -\left\langle \frac{\partial}{\partial x} v^{\varepsilon}f(u^{\varepsilon}), \varphi\right\rangle=&\int_{0}^{T_{01}}(v_{0,\varepsilon}(t)-v_0)\varphi\left(R+c(t)-\frac{\varepsilon}{2}t, t\right) \left(\dot{c}(t)-\frac{\varepsilon}{2}\right)\D{t}\nonumber\\
   &+\int_{0}^{T_{01}}(v_1-v_{0,\varepsilon}(t))\varphi\left(R+c(t)+\frac{\varepsilon}{2}t, t\right) \left(\dot{c}(t)+\frac{\varepsilon}{2}\right)\D{t}\nonumber\\
   &-\int_{R+c(t)-\frac{\varepsilon}{2}t}^{R+c(t)+\frac{\varepsilon}{2}t}\frac{\partial}{\partial t}v_{0, \varepsilon}(t)\varphi(x,t)\D{x}\D{t}\nonumber\\
&+\int_{0}^{T_{01}} \left(v_0 f(U_0(t))-v_{0, \varepsilon}(t)f(u_{0, \varepsilon}(t))\right)\varphi\left(R+c(t)-\frac{\varepsilon}{2}t, t\right)\D{t}\nonumber\\
&+\int_{0}^{T_{01}} \left(v_{0, \varepsilon}(t)f(u_{0, \varepsilon}(t))-v_1 f(U_1(t))\right)\varphi\left(R+c(t)+\frac{\varepsilon}{2}t, t\right)\D{t}.\label{eqq4.4}
\end{align}
Now using the Taylor series expansion \eqref{taylor expansion} in \eqref{eqq4.4} and following the calculations of \cref{sec2} (cf. \cref{RPL}), we find
\begin{align}\label{eqq4.5}
    -\left\langle\frac{\partial}{\partial t} v^{\varepsilon}, \varphi\right\rangle -\left\langle \frac{\partial}{\partial x} v^{\varepsilon}f(u^{\varepsilon}), \varphi\right\rangle=\mathcal {O}(\varepsilon)\quad \text{in}\quad [0,T_{01}].
\end{align}
Next by replacing $v^{\varepsilon}$ with $v^{\varepsilon}u^{\varepsilon}$ in \eqref{eqq4.5} and performing calculations similar to the ones described above, \cref{RPL} gives
\begin{align}\label{eqq4.6}
    -\left\langle\frac{\partial}{\partial t} v^{\varepsilon}u^{\varepsilon}, \varphi\right\rangle -\left\langle \frac{\partial}{\partial x} v^{\varepsilon}u^{\varepsilon}f(u^{\varepsilon}), \varphi\right\rangle+\left\langle\kappa(t)\left(u_{a}(t)-u^{\varepsilon}\right)v^{\varepsilon}\right\rangle=\mathcal {O}(\varepsilon)\quad \text{in}\quad [0,T_{01}].
\end{align}
Now we consider the interval $[T_{0i}, T_{1i}].$ Note that at $T_{0i}$-th level for each $i\in \{1,2, \cdot\cdot\cdots\},$ we have initial data that contains delta function as follows
\begin{align*}
    (v, u)(x, T_{0i})=\begin{cases}
        (v_0, u_0),\,\,\,\,\, &\text{if}\,\,\,\, x<X_{i,i},\\
        (\xi(T_{0i})\delta_{T_{0i}}, \chi(T_{0i})),\,\,\,\,\, &\text{if}\,\,\,\, x=X_{i,i},\\
        (0, u_j), \,\,\,\,\, &\text{if}\,\,\,\, X_{j,j}<x<X_{j,j+1},\\
        (v_{j+1}, u_{j+1}),\,\,\,\,\, &\text{if}\,\,\,\, X_{j,j+1}<x<X_{j+1,j+1},
    \end{cases}
\end{align*}
where 

\vspace{.2cm}
\begin{align*}
    &X_{j, j}=Y_j+\int_0^{T_{0i}} f(U_j(\xi))d\xi,~~
    X_{j, j+1}=Y_j+\int_0^{T_{0i}} f(U_{j+1}(\xi))\D{\xi},\\
    &X_{j+1, j+1}=Y_{j+1}+\int_0^{T_{0i}} f(U_{j+1}(\xi))\D{\xi} \,\,\,\, \text{for}\,\,\,\, j=i, i+1, i+2\cdot\cdot\cdots.
\end{align*}
 Therefore we use the shadow wave \eqref{sdw}, i.e.,

 \vspace{.3cm}
\begin{align}\label{sdw4.2}
U^{\varepsilon}=(v^{\varepsilon}, u^{\varepsilon})(x,t)=
\begin{cases}
\left(v_0, U_0(t)\right),\,\,\,\,\,\,\,\, &x< X_{i,i}+c(t-T_{0i})-\frac{\varepsilon}{2}(t-T_{0i})-x^{T_{0i}}_{\varepsilon},\\
\left(v_{T_{0i},\varepsilon}(t), u_{T_{0i},\varepsilon}(t)\right),\,\,\,\,\,\, &X_{i,i}+c(t-T_{0i})-\frac{\varepsilon}{2}(t-T_{0i})-x^{T_{0i}}_{\varepsilon}\\
&<x<X_{i,i}+c(t-T_{0i})+\frac{\varepsilon}{2}(t-T_{0i})+x^{T_{0i}}_{\varepsilon},\\
\left(0, U_i(t)\right),\,\,\,\,\,\,\,\,\,\,\, &X_{ii}+c(t-T_{0i})+\frac{\varepsilon}{2}(t-T_{0i})+x^{T_{0i}}_{\varepsilon}\\
&<x< Y_i+\int_0^t f(U_{i+1}(\xi))\D{\xi},\\
(v_{j+1}, U_{j+1}(t)),\,\,\,\, &Y_j+\int_0^t f(U_{j+1}(\xi))\D{\xi}<x<Y_{j+1}+\int_0^t f(U_{j+1}(\xi))\D{\xi}, 
\end{cases}
\end{align}
for $j=i, i+1, i+2, \cdots \cdot \cdots.$
In the intervals $[T_{0i}, T_{1i}]$, we have
\begin{align*}
         -\left\langle\frac{\partial}{\partial t} v^{\varepsilon}, \varphi\right\rangle&=\int_{T_{0i}}^{T_{1i}} \int_{-\infty}^{\infty}v^{\varepsilon} \frac{\partial}{\partial t}\varphi (x,t)\D{x}\D{t}-\int_{-\infty}^{\infty}\left[(v^{\varepsilon}\varphi)(x, T_{1i}-)-(v^{\varepsilon}\varphi)(x,T_{0i}+)\right]\D{x}\\
         &=I_1-\int_{-\infty}^{\infty}\left[(v^{\varepsilon}\varphi)(x, T_{1i}-)-(v^{\varepsilon}\varphi)(x,T_{0i}+)\right]\D{x}.
     \end{align*}
Substituting the shadow wave \eqref{sdw4.2} in the above equation, we get
\begin{align}
    I_1&=\int_{T_{0i}}^{T_{1i}}\int_{-\infty}^{X_{i,i}+c(t-T_{0i})-\frac{\varepsilon}{2}(t-T_{0i})-x^{T_{0i}}_{\varepsilon}}v_0\frac{\partial}{\partial t}\varphi(x, t)\D{x}\D{t}\nonumber\\
&+\int_{T_{0i}}^{T_{1i}}\int_{{X_{i,i}+c(t-T_{0i})-\frac{\varepsilon}{2}(t-T_{0i})-x^{T_{0i}}_{\varepsilon}}}^{{X_{i,i}+c(t-T_{0i})+\frac{\varepsilon}{2}(t-T_{0i})+x^{T_{0i}}_{\varepsilon}}}v_{T_{0i}, \varepsilon}(t)\frac{\partial}{\partial t}\varphi(x, t)\D{x}\D{t}\nonumber\\
&+\sum_{j=i}^{\infty}\int_{T_{0i}}^{T_{1i}}\int_{Y_j+\int_{0}^{t}f(U_{j+1}(\xi))d\xi}^{Y_{j+1}+\int_{0}^{t}f(U_{j+1}(\xi))d\xi}v_{j+1}\frac{\partial}{\partial t}\varphi(x, t)\D{x}\D{t} =I_{11}+I_{12}+I_{13}\nonumber.
\end{align}
Simplifying the above, we have
\begin{align*}
    I_{11}&=-\int_{T_{0i}}^{T_{1i}}v_0 \varphi\left(X_{i,i}+c(t-T_{0i})-\frac{\varepsilon}{2}(t-T_{0i})-x^{T_{0i}}_{\varepsilon},t\right) \left(\dot{c}(t-T_{0i})-\frac{\varepsilon}{2}\right)\D{t}\\
    &+\int_{-\infty}^{X_{i,i}+c(T_{1i}-T_{0i})-\frac{\varepsilon}{2}(T_{1i}-T_{0i})-x^{T_{0i}}_{\varepsilon}}v_0 \varphi(x, T_{1i}) \D{x}-\int_{-\infty}^{X_{i,i}-x^{T_{0i}}_{\varepsilon}}v_0\varphi(x, T_{0i})\D{x},
\end{align*}
\begin{align*}
    I_{12}&=\int_{X_{i,i}+c(T_{1i}-T_{0i})-\frac{\varepsilon}{2}(T_{1i}-T_{0i})-x^{T_{0i}}_{\varepsilon}}^{X_{i,i}+c(T_{1i}-T_{0i})+\frac{\varepsilon}{2}(T_{1i}-T_{0i})+x^{T_{0i}}_{\varepsilon}}v_{T_{0i}, \varepsilon}(T_{1i})\varphi(x,T_{1i})\D{x}-\int_{X_{i,i}-x^{T_{0i}}_{\varepsilon}}^{X_{i,i}+x^{T_{0i}}_{\varepsilon}}v_{T_{0i}, \varepsilon}(T_{0i})\varphi(x,T_{0i})\D{x}\\
    &+\int_{T_{0i}}^{T_{1i}} v_{T_{0i}, \varepsilon}(t)\varphi \left(X_{i,i}+c(t-T_{0i})-\frac{\varepsilon}{2}(t-T_{0i})-x^{T_{0i}}_{\varepsilon},t\right)\left(\dot{c}(t-T_{0i})-\frac{\varepsilon}{2}\right)\D{t}\\
    &-\int_{T_{0i}}^{T_{1i}} v_{T_{0i}, \varepsilon}(t)\varphi \left(X_{i,i}+c(t-T_{0i})+\frac{\varepsilon}{2}(t-T_{0i})+x^{T_{0i}}_{\varepsilon},t\right)\left(\dot{c}(t-T_{0i})+\frac{\varepsilon}{2}\right)\D{t}\\
    &-\int_{T_{0i}}^{T_{1i}}\int_{X_{i,i}+c(t-T_{0i})-\frac{\varepsilon}{2}(t-T_{0i})-x^{T_{0i}}_{\varepsilon}}^{X_{i,i}+c(t-T_{0i})+\frac{\varepsilon}{2}(t-T_{0i})+x^{T_{0i}}_{\varepsilon}}\frac{\partial}{\partial t} v_{T_{0i}, \varepsilon}(t)\varphi(x,t)\D{x}\D{t},
    \end{align*}
    and 
    \begin{align*}
    I_{13}&=\sum_{j=i}^{\infty}\int_{Y_j+\int_0^{T_{1i}}f(U_{j+1}(\xi))\D{\xi}}^{Y_{j+1}+\int_0^{T_{1i}}f(U_{j+1}(\xi))\D{\xi}}v_{j+1}\varphi(x,T_{1i})\D{x}\D{t}-\sum_{j=i}^{\infty}\int_{Y_j+\int_0^{T_{0i}}f(U_{j+1}(\xi))\D{\xi}}^{Y_{j+1}+\int_0^{T_{0i}}f(U_{j+1}(\xi))\D{\xi}}v_{j+1}\varphi(x,T_{0i})\D{x}\D{t}\\
    &+\sum_{j=i}^{\infty}\int_{T_{0i}}^{T_{1i}}v_{j+1}f(U_{j+1}(t))\left[\varphi\left(Y_j+\int_0^t f(U_{j+1}(\xi))\D{\xi},t\right)-\varphi\left(Y_{j+1}+\int_0^t f(U_{j+1}(\xi))\D{\xi},t\right)\right]\D{t}.
    \end{align*}

    \vspace{.2cm}
Summing up $I_{11}, I_{12}, I_{13}$ and inserting the expression of $I_1$ in above, we obtain
\begin{align*}
   &-\left\langle\frac{\partial}{\partial t} v^{\varepsilon}, \varphi\right\rangle\\
    &=\int_{T_{0i}}^{T_{1i}} \left(v_{T_{0i}, \varepsilon}(t)-v_0\right)\varphi \left(X_{i,i}+c(t-T_{0i})-\frac{\varepsilon}{2}(t-T_{0i})-x^{T_{0i}}_{\varepsilon},t\right)\left(\dot{c}(t-T_{0i})-\frac{\varepsilon}{2}\right)\D{t}\\
    &-\int_{T_{0i}}^{T_{1i}} v_{T_{0i}, \varepsilon}(t)\varphi \left(X_{i,i}+c(t-T_{0i})+\frac{\varepsilon}{2}(t-T_{0i})+x^{T_{0i}}_{\varepsilon},t\right)\left(\dot{c}(t-T_{0i})+\frac{\varepsilon}{2}\right)\D{t}\\
    &-\int_{T_{0i}}^{T_{1i}}\int_{X_{i,i}+c(t-T_{0i})-\frac{\varepsilon}{2}(t-T_{0i})-x^{T_{0i}}_{\varepsilon}}^{X_{i,i}+c(t-T_{0i})+\frac{\varepsilon}{2}(t-T_{0i})+x^{T_{0i}}_{\varepsilon}}\frac{\partial}{\partial t} v_{T_{0i}, \varepsilon}(t)\varphi(x,t)\D{x}\D{t}\\
    &+\sum_{j=i}^{\infty}\int_{T_{0i}}^{T_{1i}}v_{j+1}f(U_{j+1}(t))\left[\varphi\left(Y_j+\int_0^t f(U_{j+1}(\xi))\D{\xi},t\right)-\varphi\left(Y_{j+1}+\int_0^t f(U_{j+1}(\xi))\D{\xi},t\right)\right]\D{t}.
\end{align*}

\vspace{.3cm}
Again,
\begin{align*}
    &-\left\langle \frac{\partial}{\partial x} v^{\varepsilon}f(u^{\varepsilon}), \varphi\right\rangle\\
    &=\int_{T_{0i}}^{T_{1i}}\left(v_0 f(U_0(t))-v_{T_{0i}, \varepsilon}(t)f(u_{T_{0i}, \varepsilon}(t))\right)\varphi\left(X_{i,i}+c(t-T_{0i})-\frac{\varepsilon}{2}(t-T_{0i})-x^{T_{0i}}_{\varepsilon},t\right)\D{t}\\
    &+\int_{T_{0i}}^{T_{1i}}v_{T_{0i}, \varepsilon}(t)f(u_{T_{0i}, \varepsilon}(t))\varphi\left(X_{i,i}+c(t-T_{0i})+\frac{\varepsilon}{2}(t-T_{0i})+x^{T_{0i}}_{\varepsilon},t\right)\D{t}\\
    &+\sum_{j=i}^{\infty}\int_{T_{0i}}^{T_{1i}}v_{j+1}f(U_{j+1}(t))\left[\varphi\left(Y_{j+1}+\int_0^t f(U_{j+1}(\xi))\D{\xi},t\right)-\varphi\left(Y_{j}+\int_0^t f(U_{j+1}(\xi))\D{\xi},t\right)\right]\D{t}.
\end{align*}
Hence, we have 
\begin{align*}\label{eqq4.5}
    -\left\langle\frac{\partial}{\partial t} v^{\varepsilon}, \varphi\right\rangle -\left\langle \frac{\partial}{\partial x} v^{\varepsilon}f(u^{\varepsilon}), \varphi\right\rangle=\mathcal {O}(\varepsilon)\quad \text{in}\quad [T_{0i},T_{1i}]
\end{align*}
follows from \cref{IL},
and replacing $v^{\varepsilon}$ by $v^{\varepsilon}u^{\varepsilon}$ we conclude \eqref{eqq4.6} in $[T_{0i}, T_{1i}].$ Finally $[T_{1i}, T_{0 (i+1)}]$ can be treated in the same way by using an appropriate shadow wave.  The interaction procedure terminates at a finite stage due to the compact support of $\varphi$ and the fact $Y_{i+1}-Y_{i} \geq C_1 \varepsilon^{\alpha}$ for $0<\alpha<1.$ It can be seen that we have at most $\frac{C_1 C_{\varphi}}{\varepsilon^{\alpha}}$ number of interactions where $C_{\varphi}$ denotes the constant depending on $\varphi.$ Thus we conclude \eqref{E1} as both the equations in \eqref{eqq4.5} and \eqref{eqq4.6} are of order $\mathcal{O}(\varepsilon^{1-\alpha}).$

\noindent \textbf{Subcase 2.} Let $\tilde{u}>u_0.$ Since $\tilde{u}$ is the limit of $u_i$ as $i \to \infty,$ there exists a large $n_0\in \NN$ such that $u_{{n_0}+1}\geq u_0$ and $u_{n_0}<u_0.$ In this case the shock curve $x=c(t)$ will stay between $CD^0_1$ and $CD^{{n_0}+1}_2$ emanating from $X_{n_0, n_0}$ and the interaction stops after the time level $T_{0 n_0}.$ This completes the proof.

\end{proof}

\begin{lemma}\label{LM4.3}
    Let $u(x) \in L^{\infty}([R, \infty))\cap C([R, \infty))$ be a decreasing function and grant all other assumptions of \cref{LM3.2}. Then \eqref{E1} holds.
\end{lemma}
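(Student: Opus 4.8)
The plan is to follow the scheme of the proof of \cref{LM3.2} almost verbatim, with the roles of the contact--discontinuity/vacuum fans and the shadow waves exchanged; only the genuinely new points need comment. Since $u$ is decreasing on $[R,\infty)$, the discretised values obey $u_1>u_2>\cdots$, so at every interface $Y_i$ with $i\ge 1$ the Riemann problem \eqref{Rdata} is resolved, by \cref{RPL}, by an overcompressive shadow wave $SDW_{i,i+1}$ of the form \eqref{sdw2}. At $x=R=Y_0$ I would split, exactly as in \cref{LM3.2}, into the subcase $u_0\ge u_1$, in which $R$ emits one further shadow wave $SDW_{0,1}$, and the subcase $u_0<u_1$, in which $R$ emits the fan $CD^0_1+Vac_{0,1}+CD^1_2$; as there, a further split according to the position of $u_0$ relative to $\tilde u:=\lim_{i\to\infty}u_i$ may be invoked to see that only finitely many interactions ever enter a fixed compact set.

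First I would verify that adjacent fronts approach one another. By the overcompressibility inequalities of \cref{sec2}, the central speed of $SDW_{i,i+1}$ lies in $\bigl(f(U_{i+1}(t)),f(U_i(t))\bigr)$ and that of $SDW_{i-1,i}$ in $\bigl(f(U_i(t)),f(U_{i-1}(t))\bigr)$; since $f(U_{i-1}(t))>f(U_i(t))>f(U_{i+1}(t))$, the speed interval of $SDW_{i-1,i}$ lies entirely to the right of that of $SDW_{i,i+1}$, so the former catches up with the latter. Likewise, in the subcase $u_0<u_1$ the right contact line $CD^1_2$ of the vacuum travels with speed $f(U_1(t))$, which exceeds the speed of every $SDW$ to its right. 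Hence in all cases there is a cascade of interactions. By \cref{LM4.1} the interaction times are pinned down up to $\mathcal O(\varepsilon)$ by the crossings of the central lines; each collision of two adjacent shadow waves, or of $CD^1_2$ with a shadow wave, produces a single new front of the type $SDW$ or ${}^{i}\!SDW_{j}$ already constructed in \cref{sec2} (\cref{IL} applies because the outer velocities still straddle the intermediate one), and the number of fronts strictly decreases at each step.

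The estimate is then carried out strip by strip in time, exactly as in the proof of \cref{LM3.2}. On $[0,T_{01}]$ and on each successive strip between consecutive interactions, I would insert the relevant shadow wave into $-\langle\partial_t v^{\varepsilon},\varphi\rangle-\langle\partial_x(v^{\varepsilon}f(u^{\varepsilon})),\varphi\rangle$ and into the corresponding momentum functional, Taylor--expand $\varphi$ about the central line as in \eqref{taylor expansion}, and use the defining ODEs \eqref{ode}--\eqref{EQ2.12}, which hold precisely because the shadow waves are built to satisfy them; this is the computation already displayed in detail in \cref{LM3.2}, and it shows that both pairings are $\mathcal O(\varepsilon)$ on the strip, i.e.\ \eqref{eqq4.6} and its $v^{\varepsilon}$-analogue hold there. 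The spacing hypothesis $Y_{i+1}-Y_i\ge C_1\varepsilon^{\alpha}$ with $\alpha\in(0,1)$, together with the compact support of $\varphi$, bounds the number of strips meeting $\supp\varphi$ by $C_\varphi/(C_1\varepsilon^{\alpha})$, so summing the stripwise bounds leaves a total discrepancy of order $\varepsilon^{1-\alpha}\to 0$, which is \eqref{E1}. Admissibility comes for free: every front created above is overcompressive, hence a dissipative shadow wave by the equivalence established in \cref{sec3}.

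I expect the only real obstacle — as opposed to bookkeeping imported from \cref{LM3.2} — to be controlling the cascade of infinitely many approaching shadow waves: one must show that only finitely many fronts ever enter the compact support of $\varphi$, which is exactly what the lower bound $Y_{i+1}-Y_i\ge C_1\varepsilon^{\alpha}$ guarantees, and that in the subcase $u_0<u_1$ the right contact line of the vacuum, after successively absorbing the shadow waves to its right (each step producing a front of type ${}^{i}\!SDW_{j}$), remains overcompressive at every stage so that the cascade terminates inside $\supp\varphi$. Once this is in place, the remainder is a routine repetition of the computations in \cref{LM3.2}.
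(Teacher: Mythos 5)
Your proposal is correct and follows essentially the same route as the paper: the same case split according to whether $u_0\le u(Y_1)$ or $u_0>u(Y_1)$, the same use of overcompressibility to show that adjacent fronts (shadow waves, or the right contact line of the vacuum and the first shadow wave) necessarily collide so that \cref{IL} applies at each interaction, and the same count of at most $\mathcal O(\varepsilon^{-\alpha})$ interactions in $\supp\varphi$ to conclude \eqref{E1}. The additional remarks on the speed intervals and on admissibility are consistent with \cref{sec2} and \cref{sec3} and do not change the argument.
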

\begin{proof}
The proof is similar to the arguments of \cref{LM3.2} and hence we give a sketch of the proof by omitting the detailed analysis. We consider two cases.

\noindent \textbf{Case 1.} When $u_0 \leq u(Y_1).$ In this case we have $u_0 \leq u_1$ and $u_i>u_{i+1}$ for $i=1,2,\cdot\cdot\cdots.$ By \cref{RPL}, the solution consists of contact discontinuities and vacuum of the following form
\begin{align*}
(v, u)(x,t)=
\begin{cases}
\left(v_0, U_0(t)\right),\,\,\,\,\,\,\,\,\,\,\,\, &x< R+\int_0^t f(U_0(\xi))\D{\xi},\\
\left(0, w(x,t)\right),\,\,\,\,\,\, &R+\int_0^t f(U_0(\xi))\D{\xi}<x<R+\int_0^t f(U_1(\xi))\D{\xi},\\
\left(v_1, U_1(t)\right),\,\,\,\,\,\,\,\,\,\,\, &x>R+\int_0^t f(U_1(\xi))\D{\xi},
\end{cases}
  \end{align*}
and for each $Y_i,$ $i=1,2,3, \cdot\cdot\cdots$ a unique shadow wave $SDW_{i,i+1}$ emanates. Due to the overcompressibility $CD^1_2:=R+\int_0^1 f(U_1(\xi))\D{\xi}$ meets $SDW_{1,k}$ at a time level $T_{k(k+1)}$ for $k=2,3, \cdot\cdot\cdots.$ At this time level, we have a set of new initial data with delta-function and \eqref{E1} can be concluded by combining the arguments of \cref{IL} and \cref{LM3.2}.

\noindent \textbf{Case 2.} When $u_0 > u(Y_1).$ Since $u(x)$ is decreasing, in this case we have $u_i>u_{i+1}$ for $i=0,1,2, \cdot\cdot\cdots.$ By \cref{RPL} a unique shadow wave solution $SDW_{i, i+1}$ emerges from each $Y_i,$ $i=0,1,2, \cdot\cdot\cdots.$ Consider the initial data
\[(v,u)(x,0)=\begin{cases}(v_i, u_i),\,\,\,\, x< Y_i,\\
(v_{i+1}, u_{i+1}),\,\,\,\, x> Y_i,
\end{cases}\]
and 
\[(v,u)(x,0)=\begin{cases}(v_{i+1}, u_{i+1}),\,\,\,\, x< Y_{i+1},\\
(v_{i+2}, u_{i+2}),\,\,\,\, x> Y_{i+1}.
\end{cases}.\]
Then $SDW_{i,i+1}$ and $SDW_{i+1, i+2}$ emerges from $Y_i$ and $Y_{i+1}$,  respectively with the speed $f(\chi_i(t))$ and $f(\chi_{i+1}(t)).$ Since $SDW_{i, i+1}$ and $SDW_{i+1, i+2}$ are overcompressive, it follows that $f(\chi_{i+1}(t))\leq f(\chi_i(t))$ and therefore these two shadow waves meet at some time say $T_{i( i+1)}.$ It is difficult to determine for which $i$ first interaction happens, since the comparison of speed between any consecutive shadow waves $SDW_{i, i+1}$ and $SDW_{i+1, i+2}$ is not precisely known. Then at this level, we have a new set of initial data that contains a $\delta$-function. Again, since $u(x)$ is decreasing, applying \cref{IL} we conclude \eqref{E1}. The interaction procedure terminates after a finite number of steps because of the same reason given in \cref{LM3.2}.
\end{proof}
Now we are ready to prove the \cref{TH1.1}.
\begin{proof}[Proof of \cref{TH1.1}]
    Let $u(x)$ be a function having finitely many local extremes. For the sake of concreteness, we only consider that $u(x)$ has a local maximum and local minimum at $Y_{M}, Y_{m}\in \{Y_i\},$ $M<m,$ respectively for $i=0,1,2,\cdot\cdot\cdots.$ So, $u(x)$ is increasing on $[R, Y_M],$ decreasing on $(Y_M, Y_m]$ and again increasing on $(Y_m, \infty).$ 
    
    Now if $u_0 \leq u(Y_1)$ then no interactions occur up to the state $Y_{M-1}$ and the solution is given by $CD^{i-1}_1+Vac_{i-1,i}+CD^i_2,$ $i=1,2,3\cdot\cdot\cdots M.$ By \cref{RPL}, there exists a unique shadow wave $SDW_{M, M+1}$ from $Y_M.$ The waves continued to propagate until the first interaction occurs and this case is dealt with in \cref{LM3.2}. Further, a sequence of shadow waves $SDW_{i, i+1},$ $i=M,\cdot\cdot\cdots m-1$ emanates from  each $\{Y_i\}^{m-1}_{i=M}$ and they interact due to the overcompressibility and this case is studied in \cref{LM4.3}. Finally, since $Y_m$ is the local minimum, there exists a solution consisting $CD^m_1+Vac_{m, m+1}+CD^{m+1}_1$ starting from $Y_m.$ Again the solution propagates until the time when $SDW_{m-1,m}$ meets $CD^m_1$ and this interaction occur as $CD^m_1=f(U_m(t))<f(\chi_{m-1}(t))<f(U_{m-1}(t))$ where $f(\chi_{m-1}(t))$ denotes the speed of the shadow wave $SDW_{m-1,m}$ emerging from $Y_{m-1}.$ This case falls under \cref{LM3.2}.

    If $u_0>u(Y_1),$ then $SDW_{0,1}$ emanates from $Y_0=R$ and interacts with $CD^1_1$ which is considered in \cref{LM3.2}. The other cases are also similar and an application of \cref{LM3.2} and \cref{LM4.3} depending on the situation concludes the proof.
\end{proof}
An analogue of \cref{TH1.1} can be proved for the $3\times 3$ system \eqref{s7eq1} following the above steps and using the Riemann problem of \cref{sec2}.
\subsection {Proof of \cref{TH1.2}} The aim of this section is to prove the \cref{TH1.2}. We start with the well-known Riesz' representation theorem.
\begin{theorem}[Riesz' representation theorem]
    Let $I_\mu: C_c(\RR^d)\to \RR$ be a nonnegative linear functional. Then there exists a unique signed Radon measure $\mu$ such that
    \[I_\mu(\varphi):=\int_K \varphi \,\, \D{\mu}\]
    for any compact set $K\subset \RR^d$ with $\supp(\varphi)\subset K.$
\end{theorem}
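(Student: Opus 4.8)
The plan is to run the classical construction behind the Riesz representation theorem. Since the functional $I:=I_\mu$ is \emph{nonnegative}, the measure it produces will in fact be a positive Radon measure (so ``signed'' is only nominal), and because $\varphi$ vanishes off its support the claimed identity is just $I(\varphi)=\int_{\RR^d}\varphi\,\D\mu$. Write $\varphi\prec U$ to mean $\varphi\in C_c(\RR^d)$ with $0\le\varphi\le1$ and $\operatorname{supp}\varphi\subset U$. For open $U\subseteq\RR^d$ I set $\mu(U):=\sup\{I(\varphi):\varphi\prec U\}$, and for arbitrary $E$, $\mu^{*}(E):=\inf\{\mu(U):E\subseteq U\text{ open}\}$; positivity of $I$ makes $\mu$ monotone on open sets, so $\mu^{*}$ extends it. First I would check that $\mu^{*}$ is an outer measure: $\mu^{*}(\varnothing)=0$ and monotonicity are immediate, and countable subadditivity follows from the elementary partition-of-unity fact that if $\operatorname{supp}\varphi\subset U_1\cup\cdots\cup U_N$ then $\varphi=\sum_j h_j\varphi$ for suitable $h_j\in C_c(U_j)$ with $0\le h_j$, $\sum_j h_j\le1$, hence $I(\varphi)\le\sum_j\mu(U_j)$, combined with $\varepsilon/2^{j}$-approximation of each $\mu(U_j)$ from inside.

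Next I would show every Borel set is Carath\'eodory $\mu^{*}$-measurable; since the measurable sets form a $\sigma$-algebra it suffices to treat an open $V$, and that reduces to the inequality $\mu(U)\ge\mu^{*}(U\cap V)+\mu^{*}(U\setminus V)$ for every open $U$ with $\mu(U)<\infty$: choose $\varphi\prec U\cap V$ with $I(\varphi)$ near $\mu(U\cap V)$, then $\psi\prec U\setminus\operatorname{supp}\varphi$ with $I(\psi)$ near $\mu(U\setminus\operatorname{supp}\varphi)\ge\mu^{*}(U\setminus V)$; since $\varphi+\psi\prec U$ (the supports are disjoint, so $0\le\varphi+\psi\le1$), additivity of $I$ closes the estimate. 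Hence $\mu:=\mu^{*}|_{\mathcal{B}(\RR^d)}$ is a Borel measure, and its Radon properties are quick: outer regularity is built into the definition; local finiteness holds because for compact $K$, Urysohn's lemma gives $g\in C_c(\RR^d)$ with $g\equiv1$ on an open $U\supset K$, so $\mu(K)\le\mu(U)\le I(g)<\infty$; and inner regularity on open sets holds since for $t<\mu(U)$ one picks $\varphi\prec U$ with $I(\varphi)>t$, and then $K:=\operatorname{supp}\varphi\subset U$ is compact with $\mu(K)\ge I(\varphi)>t$.

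The crux is the representation identity $I(\varphi)=\int\varphi\,\D\mu$ for real $\varphi\in C_c(\RR^d)$; replacing $\varphi$ by $-\varphi$ it suffices to prove $I(\varphi)\le\int\varphi\,\D\mu$. Here I would use the level-set argument: with $K:=\operatorname{supp}\varphi$ and $\varphi(\RR^d)\subset(a,b)$, fix $\varepsilon>0$, take $y_0<a<y_1<\cdots<y_n=b$ with $y_i-y_{i-1}<\varepsilon$, set $E_i:=\{x\in K:y_{i-1}<\varphi(x)\le y_i\}$ (a Borel partition of $K$), pick open $U_i\supseteq E_i$ with $\mu(U_i)<\mu(E_i)+\varepsilon/n$ and $\varphi<y_i+\varepsilon$ on $U_i$, and choose a partition of unity $h_i\prec U_i$ with $\sum_i h_i\equiv1$ on $K$. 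Then $\varphi=\sum_i h_i\varphi$ and $h_i\varphi\le(y_i+\varepsilon)h_i$ pointwise, so using $I(h_i\varphi)\le(y_i+\varepsilon)I(h_i)$, $I(h_i)\le\mu(U_i)$, $\sum_i I(h_i)\ge\mu(K)$ (because $\sum_i h_i\ge\chi_K$), $\sum_i\mu(E_i)=\mu(K)$, and a uniform shift by a constant $c>|a|+|b|$ to keep all coefficients nonnegative, one telescopes to
\[
I(\varphi)\ \le\ \sum_i(y_i+\varepsilon)\,\mu(E_i)+\varepsilon\,(b+\varepsilon+c)\ \le\ \int_K\varphi\,\D\mu+2\varepsilon\,\mu(K)+\varepsilon\,(b+\varepsilon+c),
\]
using $y_i+\varepsilon<\varphi(x)+2\varepsilon$ on $E_i$. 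Letting $\varepsilon\downarrow0$ gives $I(\varphi)\le\int\varphi\,\D\mu$.

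Finally, for uniqueness, if $\mu_1,\mu_2$ are Radon measures both representing $I$, then for each open $U$ one has $\mu_j(U)=\sup\{I(\varphi):\varphi\prec U\}$ --- the bound ``$\le$'' from $\varphi\le\chi_U$, the bound ``$\ge$'' from Urysohn functions exhausting compact subsets of $U$ together with inner regularity --- so $\mu_1$ and $\mu_2$ agree on open sets, hence on all Borel sets by outer regularity. The main obstacle in the whole scheme is the representation identity in the third paragraph: the functional must be squeezed against $\int\varphi\,\D\mu$ by subordinating a partition of unity to a fine partition of the \emph{range} of $\varphi$ rather than its domain, and one has to track the sign bookkeeping through the shift $c$; the rest is routine manipulation of outer measures and applications of Urysohn's lemma.
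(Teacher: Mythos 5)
Your proposal is correct in outline, but note that the paper does not prove this statement at all: it is recalled as a classical background result (Riesz' representation theorem) only to set up the notion of weak* convergence and the compactness lemma used in the proof of \cref{TH1.2}, so there is no ``paper proof'' to compare against. What you have written is the standard construction (as in Rudin or Folland): define $\mu(U)=\sup\{I(\varphi):\varphi\prec U\}$ on open sets, pass to the induced outer measure, verify Carath\'eodory measurability of Borel sets, check outer regularity, inner regularity on open sets and finiteness on compacts, prove the representation identity by partitioning the range of $\varphi$ with a subordinate partition of unity (your shift by $c$ to keep the coefficients nonnegative plays the same role as Rudin's shift by $|a|$), and obtain uniqueness from agreement on open sets plus outer regularity. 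The steps you leave implicit (e.g.\ $\mu(K)\le I(g)$ whenever $g\in C_c$, $g\ge\chi_K$, which you use in the form $\sum_i I(h_i)\ge\mu(K)$) are standard and fillable. Two small remarks: since $I$ is nonnegative the measure produced is positive, so the word ``signed'' in the statement is indeed only nominal, exactly as you observe; and on $\RR^d$ every Borel measure that is finite on compacts is automatically Radon, so the regularity bookkeeping could be shortened, though your more general locally compact Hausdorff argument is perfectly valid.
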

We denote $\cM(\RR^d)$ as the space of all signed Radon measures endowed with the topology of the dual space of $C_c(\RR^d).$

\begin{definition}[Weak* convergence of measures]
    Let $\{\mu_{\nu}\}_{\nu \in \NN\cup \{0\}}$ be a sequence of nonnegative locally finite measures on $\RR^d.$ The sequence $\{\mu_{\nu}\}_{\nu \in \NN\cup \{0\}}$ is said to be converges weakly* to $\mu$ (denoted as $\mu_{\nu}\stackrel{\ast}{\rightharpoonup} \mu$) if 
    \[\lim_{\nu \to \infty} \int \varphi\,\, \D{\mu_{\nu}}=\int \varphi \,\, \D{\mu}\]
    for every $\varphi \in C_c(\RR^d).$
\end{definition}
\begin{lemma}[\cite{CDL08}, Proposition 2.5]\label{LM4.4}
    Let $\{\mu_{\nu}\}_{\nu \in \NN\cup \{0\}}$ be a sequence of uniformly locally bounded measures. Then there exists a subsequence still denoted as $\{\mu_{\nu}\}$ and a Radon measure $\mu$ such that $\mu_{\nu}\stackrel{\ast}{\rightharpoonup} \mu.$
\end{lemma}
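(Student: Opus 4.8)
The plan is to reduce the statement to sequential weak-$*$ compactness on an exhaustion of $\RR^d$ by compact sets, followed by a Cantor diagonal extraction and an appeal to the Riesz representation theorem stated above; throughout we use that the measures in question are nonnegative, as is the case for the densities produced by \cref{TH1.1}. First I would fix an increasing exhaustion $K_1\subset K_2\subset\cdots$ of $\RR^d$ by compact sets with $\RR^d=\bigcup_j\mathring{K}_j$, for instance $K_j=\overline{B(0,j)}$. For each fixed $j$, uniform local boundedness gives $M_j:=\sup_\nu\mu_\nu(K_j)<\infty$, so the restrictions $\mu_\nu|_{K_j}$ form a norm-bounded sequence in $\cM(K_j)\cong C(K_j)^*$. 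Since $C(K_j)$ is a separable Banach space, bounded subsets of its dual are weak-$*$ compact and metrizable for the weak-$*$ topology, hence sequentially weak-$*$ compact; thus some subsequence of $\{\mu_\nu\}$ has restrictions to $K_j$ converging weak-$*$.

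Next I would diagonalize: extract nested subsequences $\{\mu^{(1)}_\nu\}\supseteq\{\mu^{(2)}_\nu\}\supseteq\cdots$ so that $\{\mu^{(j)}_\nu\}$ converges weak-$*$ on $K_j$, and set $\mu^*_\nu:=\mu^{(\nu)}_\nu$, so that for every $j$ the tail $\{\mu^*_\nu\}_{\nu\ge j}$ converges weak-$*$ on $K_j$. Given $\varphi\in C_c(\RR^d)$, its support is compact and hence contained in some $\mathring{K}_j$, so $I(\varphi):=\lim_{\nu\to\infty}\int\varphi\,\D{\mu^*_\nu}$ exists; by the nesting of the $K_j$ this value is independent of the particular $j$ chosen, and $I$ is linear in $\varphi$ and nonnegative. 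Since $|I(\varphi)|\le M_j\|\varphi\|_{L^\infty}$ whenever $\varphi$ is supported in $K_j$, the functional $I$ meets the hypotheses of the Riesz representation theorem, which produces a Radon measure $\mu$ with $I(\varphi)=\int\varphi\,\D\mu$ for all $\varphi\in C_c(\RR^d)$. Relabelling $\{\mu^*_\nu\}$ as $\{\mu_\nu\}$ gives $\mu_\nu\stackrel{\ast}{\rightharpoonup}\mu$, which is the assertion; applying it componentwise to $v^\nu$ (and to $w^\nu$ in the two-phase case) and to the momenta $v^\nu u^\nu$ — all uniformly locally bounded by the estimates in \cref{TH1.1} — then yields the limiting measure $U^*$ of \cref{TH1.2}.

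The only delicate point is the passage from the purely local boundedness hypothesis to a single globally defined limit: one must verify that the diagonal subsequence works simultaneously on all $K_j$, that $I$ is genuinely well defined on all of $C_c(\RR^d)$ (which is exactly where the nesting of the exhaustion enters), and that its bound on each $C(K_j)$ has the form required by the Riesz theorem. The remaining ingredients — separability of $C(K_j)$ via Stone--Weierstrass, metrizability of bounded dual balls of a separable Banach space, and the elementary bound on $I$ — are standard functional analysis and require no new ideas.
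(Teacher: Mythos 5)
Your argument is correct. The paper itself does not prove this lemma; it is quoted verbatim from De Lellis \cite{CDL08}, Proposition 2.5, and the proof you give is the standard one for that compactness statement: exhaustion by compacta, weak-$*$ sequential compactness of norm-bounded sets in $C(K_j)^*$ (via separability and metrizability of the bounded dual ball), a Cantor diagonal extraction, and the Riesz representation theorem to identify the limit functional. The two delicate points you flag --- that the diagonal subsequence converges on every $K_j$ simultaneously, and that $I(\varphi)$ is independent of the choice of $j$ containing $\supp(\varphi)$ --- are exactly the ones that need checking, and your treatment of them is sound. One small remark: you restrict to nonnegative measures, which is consistent with the paper's definition of weak-$*$ convergence and with how the lemma is actually invoked in the proof of \cref{TH1.2} (the signed quantities such as $v^{\nu}u^{\nu}$ are first split by Jordan decomposition there, so nonnegativity of each piece is guaranteed before the lemma is applied); for genuinely signed measures one would either repeat your argument on the positive and negative variations or use the signed version of the Riesz theorem, but nothing more is needed for the paper's purposes.
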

To prove \cref{TH1.2}, we need to show that $|U^\nu|=(v^\nu, |u^\nu|)$ is locally uniformly bounded for each $\nu \in \NN\cup\{0\}.$ This is the content of the next lemma.
 \begin{lemma}\label{LM4.5}
     Suppose $v(x)>0$ is bounded  and $u(x)\in  L^{\infty}([R, \infty)) \cap C([R, \infty))$ and $u(x)$ has finitely many local extremes. Also assume $U^\nu=(v^\nu, u^\nu)$ be the admissible solution to \eqref{system dependent on t} and \eqref{initial data approx}. Then $\{U^\nu\}_{\nu \in \NN \cup\{0\}}$ is locally uniformly bounded for each $\nu.$
 \end{lemma}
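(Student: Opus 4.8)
The plan is to establish a bound that is \emph{uniform in $\nu$} — this is what the ensuing application of \cref{LM4.4} in the proof of \cref{TH1.2} actually needs — by controlling the two components of $U^\nu=(v^\nu,u^\nu)$ separately on an arbitrary compact rectangle $K=[-L,L]\times[0,T]$: I will bound $u^\nu$ in $L^\infty(K)$ and, viewing $v^\nu$ as the nonnegative Radon measure it induces, bound $v^\nu(K)$. Adding the two gives $\sup_\nu|U^\nu|(K)<\infty$, i.e.\ local uniform boundedness in $\cM(\RR\times[0,\infty))$, which is the assertion.

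For the velocity component, first note that since $u_0\in\RR$ and $u(x)\in L^\infty([R,\infty))$, every state value $u_i$ of the piecewise constant approximation lies in the fixed interval $[m_\ast,M_\ast]$ with $m_\ast:=\min\{u_0,\inf_{x\ge R}u\}$ and $M_\ast:=\max\{u_0,\sup_{x\ge R}u\}$. Because $\kappa,u_a\in L^\infty([0,T])$, the quantities $e^{-\int_0^t\kappa(\theta)\D{\theta}}$ and $\int_0^t F(\theta)\D{\theta}$ are bounded on $[0,T]$, so each background velocity $U_i(t)=e^{-\int_0^t\kappa(\theta)\D{\theta}}(\int_0^t F(\theta)\D{\theta}+u_i)$ stays in a fixed interval $[-A_T,A_T]$ for $t\in[0,T]$. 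On smooth and vacuum regions $u^\nu$ equals some $U_i(t)$ or the continuous interpolant $z$ of \eqref{eqq2.16} lying between two such values, while across a shadow wave $u^\nu\approx\chi(t)$, and the overcompressibility established in \cref{sec2} (\cref{RPL} and \cref{IL}) keeps $\chi(t)$ trapped between the left and right background velocities of the two states joined. Since every interaction replaces three consecutive states by a single shadow wave joining the outermost two, an induction over the finitely many interaction times shows that every velocity value present at any $t\le T$ lies in $[\min_i U_i(t),\max_i U_i(t)]\subset[-A_T,A_T]$. Hence $\|u^\nu\|_{L^\infty(K)}\le A_T$ uniformly in $\nu$, so $\iint_K|u^\nu|\,\D{x}\D{t}\le 2LT A_T$.

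For the mass component I would run a shrinking-cone argument. Set $F_\ast:=\max\{|f(s)|:|s|\le A_T\}<\infty$ (finite since $f\in C^1$) and $V_\ast:=\max\{v_0,\sup_{x\ge R}v\}<\infty$; let $v^\nu(\cdot,t)$ be the measure whose ordinary part is bounded by $V_\ast$ and which places mass $\xi^\nu(t)+\mathcal{O}(\varepsilon_\nu)$ at each shadow-wave location (the density $v_{\varepsilon_\nu}(t)$ being $\mathcal{O}(1/\varepsilon_\nu)$ on a strip of width $\mathcal{O}(\varepsilon_\nu)$). Let $\mathcal{Q}^\nu(t):=v^\nu([-L-F_\ast(T-t),L+F_\ast(T-t)],t)$ be the total mass, ordinary plus concentrated, in this shrinking interval. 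Every wave speed is $f$ of a velocity value, hence bounded in modulus by $F_\ast$, while the first equation of \eqref{system dependent on t} is a local conservation of mass which, for shadow waves, is exactly the first line of \eqref{ode} with $\xi(0)=\bar m$, so mass is conserved both in smooth evolution and through every interaction. Consequently the interval shrinks with its endpoints moving inward at speed $F_\ast$, at least as fast as the fastest wave, no mass enters from outside, $\mathcal{Q}^\nu$ is non-increasing, and $\mathcal{Q}^\nu(t)\le\mathcal{Q}^\nu(0)=\int_{-L-F_\ast T}^{L+F_\ast T}v^\nu(x,0)\,\D{x}\le V_\ast(2L+2F_\ast T)$, using $0\le v^\nu(\cdot,0)\le V_\ast$. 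In particular the $v^\nu$-mass of $[-L,L]$ at any $t\le T$ is $\le V_\ast(2L+2F_\ast T)$, and integrating in $t$ gives $v^\nu(K)\le TV_\ast(2L+2F_\ast T)$, uniformly in $\nu$; combining with the velocity bound and letting $L,T$ vary finishes the proof.

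The delicate point is exactly this mass estimate: $v^\nu$ cannot be bounded pointwise — it behaves like $1/\varepsilon_\nu$ on the shadow-wave strips, and the number of interactions can grow like $\varepsilon_\nu^{-\alpha}$ — so the argument must be carried out at the level of measures, using finite propagation speed together with the \emph{exact} conservation of total mass across interactions (not merely an $\mathcal{O}(\varepsilon)$ estimate) to keep $\mathcal{Q}^\nu$ controlled by its initial value. The velocity bound, by contrast, is a soft consequence of overcompressibility and the boundedness of the data.
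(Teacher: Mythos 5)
Your proposal is correct and follows essentially the same two-step strategy as the paper: the velocity component is bounded via overcompressibility together with the $L^\infty$ bounds on $\kappa$, $u_a$ and the initial data, and the density component is bounded in the sense of measures via conservation of mass. The paper's treatment of the mass bound is a one-line assertion $0\le\int_K v^\nu\,\D{x}\,\D{t}\le C_K\max\{v_0,\sup_{x>R}v(x)\}$, so your shrinking-cone argument with finite propagation speed and exact conservation of $\xi(t)$ across interactions is simply a more explicit justification of the same estimate, not a different route.
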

 \begin{proof}
     The proof follows from overcompressibility and conservation of mass principle. 
     
\noindent{\bf Step 1.} For $t\in (t_0, T),$ using
     \[\left|\int_0^t F(\xi)\D{\xi}\right| \leq ||u_{a}||_{L^{\infty}([t_0, T])}\left(e^{||\kappa||_{L^{\infty}([t_0, T])}T}-1\right),\] we get
     \[\chi(t)\leq \left|U_{l,r}(t)\right| \leq ||u_{a}||_{L^{\infty}([t_0,T])}\left(e^{||\kappa||_{L^{\infty}([t_0, T])}T}-1\right)+|u_{l,r}|,\] and hence $\dot{c}(t)$ is also locally uniformly bounded. This, combined with the fact that $u(x)$ in \eqref{initial data approx} is bounded gives $|u^\nu|$ is locally uniformly bounded for each $\nu\in \NN \cup \{0\}.$
     
\noindent{\bf Step 2.} To prove $v^\nu \geq 0$ is locally uniformly bounded, observe that for any compact set $K\subset \RR\times [0, \infty)$ we have
\begin{align*}
    0\leq \int_{K} v^{\nu}(x,t)\D{x}\D{t} \leq C_K \max\left\{v_0, \sup_{x>R}v(x)\right\}<\infty.
\end{align*}
Therefore, $|U^\nu|=(v^\nu, |u^\nu|)$ is bounded in $L^1(K)$ for each compact set $K \subset \RR\times [0, \infty).$ This completes the proof.
 \end{proof}
 Now we complete the proof of \cref{TH1.2}. We will use the \textit{Jordan Decomposition} of signed measures. If $\mu$ is a signed measure, then there exist unique positive mutually singular measures $\mu^+$ and $\mu^-$ such that $\mu=\mu^+-\mu^-.$ The measures $\mu^+$ and $\mu^-$ are called positive and negative variations of $\mu$ and $|\mu|=\mu^++\mu^-$ is defined as the total variation of the measure $\mu.$
 \begin{proof}[Proof of \cref{TH1.2}]
     The proof can be completed following the same lines given in \cite[Theorem 7.1]{RN21}. We repeat it here for the sake of completeness. Using \cref{LM4.5} and decomposition of measures, $\{U^\nu_+\}_{\nu \in \NN\cup\{0\}}$ and $\{U^\nu_-\}_{\nu \in \NN\cup\{0\}}$ are locally uniformly bounded where $U^\nu=U^\nu_+-U^\nu_-.$ Thus by \cref{LM4.4}, $U^\nu_+ \stackrel{\ast}{\rightharpoonup} U^*_+$ and $U^\nu_- \stackrel{\ast}{\rightharpoonup} U^*_-$ where $U^*_{\pm}$ are Radon measures. Therefore $U^\nu$ converges weakly* to a Radon measure $U^*=U^*_+-U^*_-.$ Also a direct use of \cref{LM4.4} gives $|U^\nu|$ converges weakly* to $|U^*|.$
 \end{proof}
 \section{Some examples} \label{sec5}
 In this section, we present some examples of physically relevant models which can be derived from \eqref{system dependent on t}. The first obvious example is the pressureless gas dynamics model with flocking dissipation that can be derived by setting $f(u)=u,$ $\kappa(t)=1,$ and $u_a(t)=0.$ We provide some more examples below.
\subsection{System of nonlinear geometric optics}
 The following examples can be constructed by setting $f: U\mapsto \frac{U}{\sqrt{1+U^2}}$ in \eqref{system dependent on t}, i.e., consider the system of the following form
\begin{align*}
\begin{cases}
v_{t}+\left(\frac{vU}{\sqrt{1+U^2}}\right)_{x}=0,\\
(vU)_{t}+\left(\frac{vU^2}{\sqrt{1+U^2}}\right)_{x}=\kappa(t)(u_{a}(t)-U)v,\nonumber
\end{cases}
\end{align*}
with the initial data $(v, U)(x,0)=(v_0(x), U_0(x)).$ One can easily check that $f^{\prime}(U)>0.$

The motivation for considering the above system comes from the system of nonlinear geometric optics with a damping source. We use the transformation $U=\frac{u}{v}$, i.e., $u=vU$ to obtain the following:
\begin{align*}
\begin{cases}
v_{t}+\left(\frac{u^2}{\sqrt{u^2+v^2}}\right)_{x}=0,\\
u_{t}+\left(\frac{uv}{\sqrt{u^2+v^2}}\right)_{x}=\kappa(t)(u_{a}(t)v-u).\nonumber
\end{cases}
\end{align*}
For the homogeneous version of the above example, see \cite{zhangcpaa, yangjde}.
\subsection{A model with $f(u)=u^k,$ $k$ odd.} In their work \cite{richardacta}, Cruz and Juajibioy investigated the Riemann problem associated with the pressureless model featuring linear damping, described by the following equations:
\begin{align*}
\begin{cases}
v_{t}+\left(vu^k\right)_{x}=0,\\
(vu)_{t}+\left(vu^{k+1}\right)_{x}=-\alpha u v,\nonumber
\end{cases}
\end{align*}
where $k$ is an odd natural number, $\alpha>0$ is a constant. This system can immediately be obtained by plugging $f(u)=u^k,$ $\kappa(t)=\alpha,$ and $u_a(t)=0.$ Since $k$ is odd, clearly $f^{\prime}(u)=ku^{k-1}>0.$\\
\subsection{Pressureless hydrodynamic model} Recently, Piccoli \textit{et al.} \cite{piccolizamp} derived a pressureless hydrodynamic model from the second-order macroscopic traffic flow. The model reads
\begin{align*}
\begin{cases}
    v_t+\left(\frac{vu}{a+u}\right)_x=0,\\
    (vu)_t+\left(\frac{vu^2}{a+u}\right)_x=\frac{\mu p v^2(s_d(v)-u)}{2(\nu+1)},
\end{cases}
\end{align*}
where $v$ and $u$ denote the traffic density and  the local mean headway (i.e., the distance from their leading vehicles), respectively. Furthermore, $a>0$ denotes the magnitude of the driver sensitivity, $0<\mu<1$ corresponds to the equilibrium coefficient in order to control the deviation, whereas $\nu>0$ is related to the cost of control. The function $s_d(v)$ is used to express the safe distance from the preceding vehicle and $0\leq p\leq 1$ denotes the percentage of vehicles in a traffic flow. More recently, the Riemann problem for the non-homogeneous version of the above system is studied by Wang and Sun \cite{WSJMP23}. Setting $f(u)=\frac{u}{a+u}$ in \eqref{system dependent on t}, we propose a pressureless hydrodynamic model with a damping source that depends on time. For this $f,$ it is easy to check $f^{\prime}(u)=\frac{a}{(a+u)^2}>0.$

\vspace{.5cm}
\noindent\textbf{Part II: Explicit formula for \eqref{source term depending on x and t}.}
 \section{Eulerian droplet model involving space-time dependent source}\label{sec6}
 In this section, we deal with the case $f(u)=u$ and $u_a(x,t)$ depending on both space and time variables. We employ generalized variational techniques and obtain an explicit representation of the solution to the system \eqref{second system} with air velocity that has an algebraic decay in time. We start by introducing the generalized potential for the initial value problem as follows
 \begin{align}\label{second system} 
v_{t}+(vu)_{x}&=0,\\
(vu)_{t}+(vu^2)_{x}&=\kappa(t)(u_{a}(x,t)-u)v,\nonumber
\end{align}
adjoined with the initial data $(v(x,0), u(x,0))=(v_0(x), u_0(x)).$ We recall $\kappa(t)=\frac{1}{t+\upkappa}$ and $u_a(x,t)=\frac{x}{t+\upkappa}$ for any $\upkappa \in \RR^{+}.$
The characteristics equation for the above system is reduced to
 \begin{align}\label{second characteristic}
\begin{cases}
&\frac{\D{x(t)}}{\D{t}}=u(x(t),t),\\
&\frac{\D{u(x(t),t)}}{\D{t}}=\frac{1}{t+\upkappa}(\frac{x}{t+\upkappa}-u(x(t),t)),\\
&x(0)=x_0.
\end{cases}
\end{align}
From \eqref{second characteristic}, we obtain second-order ODE as follows:
\begin{align}\label{second ODE}
    &\frac{\D^2{x}}{\D{t^2}}+\frac{1}{t+\upkappa}\frac{\D{x}}{\D{t}}-\frac{1}{(t+\upkappa)^2}x=0,\\
    &x(0)=x_0.\nonumber
\end{align}
Solving the above ODE \eqref{second ODE}, we obtain
\begin{align*}
    &x(t)=\left(\frac{x_0}{2\upkappa}+\frac{u_0(x_0)}{2}\right)(t+\upkappa)+\left(\frac{x_0}{2}-\frac{u_0(x_0)\upkappa}{2}\right)\frac{\upkappa}{(t+\upkappa)},\\
    &u(x(t),t)=\left(\frac{x_0}{2\upkappa}+\frac{u_0(x_0)}{2}\right)-\left(\frac{x_0}{2}-\frac{u_0(x_0)\upkappa}{2}\right)\frac{\upkappa}{(t+\upkappa)^2}.
\end{align*}
Now we introduce our generalized potential as the following.
\begin{align}\label{potential}
    F(y,x,t)=\int_0^y\left(\left(\frac{\eta}{2\upkappa}+\frac{u_0(\eta)}{2}\right)(t+\upkappa)+\left(\frac{\eta}{2}-\frac{u_0(\eta)\upkappa}{2}\right)\frac{\upkappa}{(t+\upkappa)}-x\right) v_0(\eta)d\eta.
\end{align}
Given a point $(x,t),$ let $y_*(x,t)$ and $y^*(x,t)$ be the leftmost and the rightmost points on the $x$- axis such that
\begin{align*}
    \min_{y\in \RR} F(y,x,t)= F(y_*(x,t), x,t)=F(y^*(x,t), x,t).
\end{align*}
Below, we give the explicit representation for $m(x,t)$ and $u(x,t).$ The formula for the pair $(m,u)$ is given by
\begin{align}
&m(x,t)=\int_0^{y_*(x,t)} v_0(\eta)\D{\eta},\label{exf1}\\
&u(x,t)=\begin{cases}
&\frac{1+\frac{\upkappa^2}{(t+\upkappa)^2}}{(t+\upkappa)-\frac{\upkappa^2}{(t+\upkappa)}}\left[x-\frac{1}{2}\left(\frac{t+\upkappa}{\upkappa}+\frac{\upkappa}{t+\upkappa}\right)y_*(x,t)\right]+\frac{y_*(x,t)}{2}\left(\frac{1}{\upkappa}-\frac{\upkappa}{(t+\upkappa)^2}\right), \,\,\,\, \text{if}\,\,\,\, y_*(x,t)=y^*(x,t),\\
&\frac{1}{\int_{y_*}^{y^*}v_0(\eta) \D{\eta}}\int_{y_*}^{y^*}\left[\left(\frac{\eta}{2\upkappa}+\frac{u_0(\eta)}{2}\right)-\left(\frac{\eta}{2}-\frac{u_0(\eta)\upkappa}{2}\right)\frac{\upkappa}{(t+\upkappa)^2}\right]v_0(\eta)\D{\eta}, \,\,\,\, \text{if}\,\,\,\, y_*(x,t)\neq y^*(x,t)\label{exf}.
\end{cases}
\end{align}
\subsection{Derivation of the formula \cref{exf}} In this section we derive the above formula. Derivation of the formula \eqref{exf} consists of several steps that are formulated as lemmas.  We Define the left backward characteristics and right backward characteristics for $0\leq  t<t_0$ as
\begin{align}
    X_l(x_0, t_0, t)=\frac{y_*(x_0, t_0)}{2}\left(\frac{t+\upkappa}{\upkappa}+\frac{\upkappa}{t+\upkappa}\right)+\left((t+\upkappa)-\frac{\upkappa^2}{t+\upkappa}\right)\frac{x_0-\frac{y_*(x_0, t_0)}{2}\left(\frac{t_0+\upkappa}{\upkappa}+\frac{\upkappa}{t_0+\upkappa}\right)}{\left((t_0+\upkappa)-\frac{\upkappa^2}{t_0+\upkappa}\right)},\label{LBC}\\
    X_r(x_0, t_0, t)=\frac{y^*(x_0, t_0)}{2}\left(\frac{t+\upkappa}{\upkappa}+\frac{\upkappa}{t+\upkappa}\right)+\left((t+\upkappa)-\frac{\upkappa^2}{t+\upkappa}\right)\frac{x_0-\frac{y^*(x_0, t_0)}{2}\left(\frac{t_0+\upkappa}{\upkappa}+\frac{\upkappa}{t_0+\upkappa}\right)}{\left((t_0+\upkappa)-\frac{\upkappa^2}{t_0+\upkappa}\right)}.\nonumber
\end{align}
Also, let us denote
\begin{align*}
    X^+_{l,r}(x_0, t_0, t):=\left\{(x,t) \big| x \geq X_{l, r}(x_0, t_0, t)\right\},~~
    X^-_{l,r}(x_0, t_0, t):=\left\{(x,t) \big| x \leq X_{l, r}(x_0, t_0, t)\right\}.
\end{align*}
Next, we define the characteristic triangle.
\begin{definition}
The area $\{(x, t)\,\,| \,\, x\in \RR, 0\leq t \leq t_0\} \cap X^+_l(x_0, t_0, t) \cap X^-_r(x_0, t_0, t)$ is said to be a characteristic triangle associated to $(x_0, t_0)$ and it is denoted as $\Delta(x_0, t_0).$
\end{definition}
Next, we collect some basic properties of the functional $F(y,x,t),$ minimizers $y^*(x,t), y_*(x,t)$ and the characteristic triangles which can be proved easily following \cite{haung-wang, Wang-haung, neumann2022}.
\begin{lemma}\label{LM5.1}
Grant the assumptions on the initial data. Then, we have
\begin{enumerate}
   \item For a fixed $t,$ $y_*(x,t)$ and $y^*(x,t)$ are monotonically increasing in $x$ and for $x_1< x_2,$ $y^*(x_1, t) \leq y_*(x_2, t).$ Furthermore, $y_*(x,t)$ is lower semicontinuous and $y^*(x,t)$ is upper semicontinuous.
   \item For fixed $(x,t)$, let the $\min\limits_{y\in [0,\infty)}F(y,x,t)$ be 
attained at $y_1$. Then for any given point $(x^{\prime},t^{\prime})$ on the curve joining $(y_1,0)$ and $(x,t)$, we have $F(y,x^{\prime},t^{\prime}) > F(y_1,x^{\prime},t^{\prime})$ for $y \neq y_1.$
\item The function $F: \RR \times [0, \infty)\to \RR,$ i.e.,$ (x,t)\mapsto F(x,t)$ is locally Lipschitz continuous.
\item Let $t > 0$ be fixed, and  $x_1 \neq  x_2$ be arbitrary. Then the characteristic triangles
associated with $(x_1,t)$ and $(x_2,t)$ do not intersect in $\RR \times [0, \infty).$ Consequently, if two characteristic triangles intersect in $\RR \times [0, \infty)$, then one is contained in the other.
\item For any time $t_0>0$, we have
\begin{align*}
    \bigcup_{x \in \RR}\Delta(x,t_0)=\left\{(x,t)\big|x \in \RR,0 \leq t \leq t_0\right\}.
\end{align*}
\end{enumerate}
\end{lemma}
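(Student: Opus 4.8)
The plan is to prove the five assertions in order, the whole argument resting on the factorisation of $F$ in the variable $x$ together with a single reparametrisation in time. Throughout write
\[
F(y,x,t)=\int_0^y\bigl(\Phi(\eta,t)-x\bigr)v_0(\eta)\,\D{\eta},\qquad \Phi(\eta,t)=A(\eta)(t+\upkappa)+B(\eta)\tfrac{\upkappa}{t+\upkappa},
\]
where $\Phi(\eta,\cdot)$ is the position of the characteristic issued from $\eta$ at time $0$, $A(\eta)=\tfrac{\eta}{2\upkappa}+\tfrac{u_0(\eta)}{2}$, $B(\eta)=\tfrac{\eta}{2}-\tfrac{u_0(\eta)\upkappa}{2}$, so that $\upkappa A(\eta)+B(\eta)=\eta$ and $\Phi(\eta,0)=\eta$. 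Note $\partial_yF=(\Phi(y,t)-x)v_0(y)$, that the only $x$-dependence of $F$ is the linear term $-x\int_0^y v_0$, and that $F$ is jointly continuous in $(y,x,t)$; coercivity of $F(\cdot,x,t)$ for each fixed $(x,t)$ (so that $y_*,y^*$ exist and stay in a bounded set while $(x,t)$ ranges over a compact set) follows from the growth hypotheses on the data, as in \cite{haung-wang,Wang-haung,neumann2022}. For (1) I would use the four-point (rearrangement) inequality: for $x_1<x_2$ and minimisers $y_i$ of $F(\cdot,x_i,t)$, summing $F(y_1,x_1,t)\le F(y_2,x_1,t)$ and $F(y_2,x_2,t)\le F(y_1,x_2,t)$ cancels the $x$-free part and leaves $(x_2-x_1)\bigl(\int_0^{y_1}v_0-\int_0^{y_2}v_0\bigr)\le 0$, whence $y_1\le y_2$ since $v_0>0$; choosing $(y_1,y_2)=(y^*(x_1,t),y_*(x_2,t))$ gives $y^*(x_1,t)\le y_*(x_2,t)$ and thereby the monotonicity of $y_*$ and of $y^*$. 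Lower semicontinuity of the monotone map $y_*(\cdot,t)$ amounts to left-continuity: if $x_n\uparrow x$ then $y_*(x_n,t)\uparrow\ell\le y_*(x,t)$, and joint continuity of $F$ forces $\ell$ to be a minimiser at $(x,t)$, so $\ell\ge y_*(x,t)$ by leftmost-ness; hence $\ell=y_*(x,t)$. The case of $y^*$ is symmetric.

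Assertion (2) is the heart of the matter, and the one new feature relative to the classical pressureless model is that characteristics are curves $\gamma(t+\upkappa)+\delta\tfrac{\upkappa}{t+\upkappa}$ rather than straight lines, so the usual ``affine in $t$'' monotonicity must be upgraded to ``affine in $(t+\upkappa)^2$''. Parametrise the curve joining $(y_1,0)$ and $(x,t)$ as $t'\mapsto(X(t'),t')$, with $X$ the solution of the characteristic equation \eqref{second ODE} satisfying $X(0)=y_1$, $X(t)=x$, so that $X(t')=\gamma(t'+\upkappa)+\delta\tfrac{\upkappa}{t'+\upkappa}$ for constants $\gamma,\delta$; put
\[
h(t')\df F(y,X(t'),t')-F(y_1,X(t'),t')=\int_{y_1}^{y}\bigl(\Phi(\eta,t')-X(t')\bigr)v_0(\eta)\,\D{\eta}.
\]
Since $(t'+\upkappa)\Phi(\eta,t')=A(\eta)(t'+\upkappa)^2+B(\eta)\upkappa$ and $(t'+\upkappa)X(t')=\gamma(t'+\upkappa)^2+\delta\upkappa$ are both affine in $\sigma\df(t'+\upkappa)^2$, so is $(t'+\upkappa)h(t')$. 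At $t'=0$ (i.e.\ $\sigma=\upkappa^2$) it equals $\upkappa\int_{y_1}^{y}(\eta-y_1)v_0(\eta)\D{\eta}>0$ for every $y\ne y_1$ (using $\Phi(\eta,0)=\eta$ and $v_0>0$); at $t'=t$ (i.e.\ $\sigma=(t+\upkappa)^2>\upkappa^2$) it equals $(t+\upkappa)\bigl(F(y,x,t)-F(y_1,x,t)\bigr)\ge0$ because $y_1$ minimises $F(\cdot,x,t)$. An affine function of $\sigma$ positive at the left endpoint $\upkappa^2$ and nonnegative at the right endpoint $(t+\upkappa)^2$ is strictly positive on $[\upkappa^2,(t+\upkappa)^2)$, so $h(t')>0$ for all $0\le t'<t$ --- exactly the claim (the boundary case $y_1=0$ is covered by the same computation).

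For (3), writing $F(x,t)\df\min_y F(y,x,t)$ for the value function, each slice $F(y,\cdot,\cdot)$ has $\partial_xF=-\int_0^y v_0$ and $\partial_tF=\int_0^y\bigl(A(\eta)-B(\eta)\tfrac{\upkappa}{(t+\upkappa)^2}\bigr)v_0(\eta)\D{\eta}$, both bounded uniformly for $y$ in a bounded set and $(x,t)$ in a compact set; combining this with the local boundedness of minimisers and the two-sided comparison $F(x,t)-F(x_0,t_0)\le F(y_0,x,t)-F(y_0,x_0,t_0)$ (with $y_0$ a minimiser at $(x_0,t_0)$, and its mirror image with a minimiser at $(x,t)$) yields local Lipschitz continuity. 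For (4), the right edge $X_r(x_1,t,\cdot)$ of $\Delta(x_1,t)$ and the left edge $X_l(x_2,t,\cdot)$ of $\Delta(x_2,t)$ are, by their explicit formulas, curves of the form $\gamma(t'+\upkappa)+\delta\tfrac{\upkappa}{t'+\upkappa}$ with $X_r(x_1,t,0)=y^*(x_1,t)$, $X_l(x_2,t,0)=y_*(x_2,t)$, $X_r(x_1,t,t)=x_1$, $X_l(x_2,t,t)=x_2$; exactly as in (2), $(t'+\upkappa)\bigl(X_l(x_2,t,t')-X_r(x_1,t,t')\bigr)$ is affine in $(t'+\upkappa)^2$, equal to $\upkappa\bigl(y_*(x_2,t)-y^*(x_1,t)\bigr)\ge0$ at $t'=0$ (by (1)) and to $(x_2-x_1)(t+\upkappa)>0$ at $t'=t$, hence nonnegative throughout $[0,t]$; thus $X_r(x_1,t,\cdot)\le X_l(x_2,t,\cdot)$ and the two triangles have disjoint interiors, while the nesting of two intersecting triangles with arbitrary base times follows from this no-crossing property by the standard topological argument of \cite{haung-wang}. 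Finally (5): for each fixed $0\le t\le t_0$ the maps $x'\mapsto X_l(x',t_0,t)$ and $x'\mapsto X_r(x',t_0,t)$ are nondecreasing (by (1)), proper on $\RR$, and leave no gap between consecutive triangles (by the semicontinuity in (1)), so the intervals $[X_l(x',t_0,t),X_r(x',t_0,t)]$, $x'\in\RR$, cover $\RR$; letting $x'$ vary gives $\bigcup_{x'}\Delta(x',t_0)=\RR\times[0,t_0]$. The main obstacle is (2); every other step is routine once the factorisation in $x$ and the substitution $\sigma=(t+\upkappa)^2$ are in hand.
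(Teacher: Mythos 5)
Your proof is correct, and it is worth noting that the paper itself offers no proof of this lemma at all: it simply asserts that the properties ``can be proved easily following'' \cite{haung-wang, Wang-haung, neumann2022}, which treat straight-line characteristics. What you supply is precisely the adaptation those references need here, and your key observation --- that multiplying by $(t'+\upkappa)$ turns both the characteristic positions $\Phi(\eta,t')$ and the connecting curve $X(t')$ into affine functions of $\sigma=(t'+\upkappa)^2$, so that $(t'+\upkappa)h(t')$ is affine in $\sigma$ with value $\upkappa\int_{y_1}^{y}(\eta-y_1)v_0\,\D\eta>0$ at $\sigma=\upkappa^2$ --- is exactly the right substitute for the ``affine in $t$'' structure used in the classical pressureless case; the same device cleanly handles the no-crossing property in (4), since $X_l$ and $X_r$ have the same $\gamma(t'+\upkappa)+\delta\upkappa/(t'+\upkappa)$ form. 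The four-point inequality for (1), the semicontinuity-as-one-sided-continuity argument, and the two-sided comparison for (3) are all correct and standard. Two caveats, both shared with the paper rather than introduced by you: the existence and local boundedness of minimizers requires a growth condition on $(v_0,u_0)$ at infinity that the stated hypotheses ($v_0>0$, $u_0$ locally bounded) do not literally guarantee, and you defer the ``nesting of intersecting triangles'' in (4) and the covering argument in (5) to the standard topological reasoning of \cite{haung-wang}; spelling out the latter (e.g.\ via the intermediate-value property of the monotone, semicontinuous edge maps $x'\mapsto X_{l,r}(x',t_0,t)$) would make the write-up self-contained, but as it stands your argument is already more complete than what the paper provides.
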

Now we are ready to derive the formula \eqref{exf}.
\begin{lemma}\label{lem6.2}
Let $t_1>0$. Each point $(x_1,t_1)$ uniquely determines a Lipschitz continuous curve $x = X(t)$, for $t\geq t_1$ with $x_1 = X(t_1)$ such that the characteristic triangles associated with points on the curve form an increasing family of sets. At every $t \geq t_1$, $u(x,t)$ is defined as the slope of the curve $X^{\prime}(t).$
\end{lemma}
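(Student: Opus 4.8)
The plan is to build the curve $x = X(t)$ for $t \ge t_1$ by propagating forward in time the family of characteristic triangles from \cref{LM5.1}, and only afterwards to verify that its slope is the velocity field prescribed in \eqref{exf}. First I would invoke the covering property \cref{LM5.1}(5): for every $t \ge t_1$ the point $(x_1,t_1)$ lies in $\Delta(x,t)$ for some $x$, while the first assertion of \cref{LM5.1}(4) says that two characteristic triangles with apexes on the same time level $t$ cannot intersect. Hence there is exactly one $x =: X(t)$ with $(x_1,t_1) \in \Delta(X(t),t)$; this defines $X$ on $[t_1,\infty)$ with $X(t_1) = x_1$, and uniqueness of such an $x$ at each time level is precisely what pins down the curve among those with the asserted nesting property.

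Next I would show that $\{\Delta(X(t),t)\}_{t \ge t_1}$ is an increasing family. For $t_1 \le t \le t'$ both triangles contain $(x_1,t_1)$, so they intersect, and by the second assertion of \cref{LM5.1}(4) one is contained in the other; since $\Delta(X(t'),t')$ reaches the time level $t' \ge t$, it cannot be the smaller one unless $t = t'$, which gives $\Delta(X(t),t) \subseteq \Delta(X(t'),t')$, and likewise $\Delta(x_1,t_1) \subseteq \Delta(X(t),t)$. For the Lipschitz regularity I would use that the two lateral sides of $\Delta(X(t),t)$ are the explicit backward characteristics $X_l(X(t),t,\cdot)$ and $X_r(X(t),t,\cdot)$ of \eqref{LBC}: on any finite interval $[t_1,T]$ these are Lipschitz in all their arguments with a constant depending only on $\upkappa$, $T$ and local bounds on the data, since the coefficients $(t+\upkappa)\pm\upkappa^2/(t+\upkappa)$ are smooth and bounded away from zero and $y_*,y^*$ stay locally bounded (by the velocity estimates used in \cref{LM4.5}). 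Monotonicity of the family forces $X(s)$ to lie between $X_l(X(t),t,s)$ and $X_r(X(t),t,s)$ for $t_1 \le s \le t$; combined with the monotonicity of $y_*,y^*$ in $x$ from \cref{LM5.1}(1), this rules out jumps and yields a uniform Lipschitz bound for $X$ on $[t_1,T]$.

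Finally I would identify $X'(t)$ with $u(X(t),t)$. At a time $t$ with $y_*(X(t),t) = y^*(X(t),t)$ the triangle $\Delta(X(t),t)$ collapses onto a single curve, namely the classical characteristic issued from $(y_*,0)$ solving \eqref{second ODE}; differentiating its closed-form solution returns exactly the first branch of \eqref{exf}. At a time with $y_*(X(t),t) < y^*(X(t),t)$ --- a delta-shock point --- I would instead integrate the two balance laws of \eqref{second system} over the thin region trapped between $\Delta(X(t),t)$ and $\Delta(X(t+h),t+h)$, divide by $h$ and let $h \to 0$; the mass $\int_{y_*}^{y^*} v_0\,\D{\eta}$ and the momentum carried there, together with $\dot X$ being their ratio, reproduce the second branch of \eqref{exf}. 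I expect this last step --- reconciling the geometrically defined slope with the explicit averaged-velocity formula at delta-shock points, while carrying the $\upkappa$-dependent weights coming from \eqref{LBC} --- to be the main obstacle; the construction and the regularity in the earlier steps are essentially a transcription of the pressureless-gas arguments, whereas the nonconstant, space-dependent source makes the bookkeeping in this identification the delicate part.
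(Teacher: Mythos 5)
Your construction of the curve (unique $x=X(t)$ with $(x_1,t_1)\in\Delta(x,t)$ via \cref{LM5.1}(4)--(5)), the nesting of the triangles, the Lipschitz bound by squeezing between the lateral sides \eqref{LBC}, and the identification of $X'(t)$ at points with $y_*=y^*$ all match the paper's argument in substance. The gap is in the case $y_*(x,t)<y^*(x,t)$. You propose to ``integrate the two balance laws of \eqref{second system} over the thin region trapped between $\Delta(X(t),t)$ and $\Delta(X(t+h),t+h)$.'' But at this stage of the construction the pair $(m,u)$ has not been shown to satisfy \eqref{second system} in any sense --- that verification is the content of the later Steps 1--6 and itself uses the formula \eqref{exf} you are trying to derive. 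So this step is circular: the balance laws are a conclusion, not an available tool, and the averaged-velocity expression cannot be extracted from them here.

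What the paper actually uses at shock points is the minimization property of the generalized potential alone. Writing $t<t'<t''$, $x'=X(t')$, $x''=X(t'')$, one compares values of $F$ at the minimizers of neighbouring points, e.g.
\begin{align*}
F(y^*(x'',t''),x'',t'')-F(y_*(x',t'),x'',t'')\;\leq\;F(y^*(x'',t''),x',t')-F(y_*(x',t'),x',t'),
\end{align*}
inserts the explicit potential \eqref{potential}, divides by $t''-t'$, and lets $t'',t'\to t$ to obtain a one-sided bound on $X'(t)$ by the mass-weighted average over $[y_*,y^*]$; the mirror inequality gives the reverse bound, and together they force the second branch of \eqref{exf}. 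The point is that mass and momentum ``conservation'' across the shock is already encoded in the structure of $F$ through its integrand, so no appeal to the PDE is needed. Replacing your Case II argument by this pair of variational inequalities closes the gap; the rest of your proposal stands.
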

\begin{proof}
Let $t<t^{\prime} <t^{\prime \prime}$ and $x^{\prime}=X(t^{\prime}), x^{\prime \prime}=X(t^{\prime \prime}).$ Applying $(1)$ of \cref{LM5.1}, we have
\begin{align*}
y_*(x^{\prime \prime}, t^{\prime \prime}) \leq y_*(x^{\prime}, t^{\prime})\leq y_*(x,t) \leq y^*(x,t) \leq y^*(x^{\prime}, t^{\prime}) \leq y^*(x^{\prime \prime}, t^{\prime \prime})
\end{align*}
and $\{y_*(x^{\prime \prime}, t^{\prime \prime}), y_*(x^{\prime}, t^{\prime})\}$ and $\{y^*(x^{\prime \prime}, t^{\prime \prime}), y^*(x^{\prime}, t^{\prime})\}$ tend to $y_*(x,t)$ and $y^*(x,t)$, respectively as $t^{\prime\prime}, t^{\prime} \to t.$ Now, we consider the following two cases.\\
\noindent \textbf{Case I.} Let $y_*(x,t)=y^*(x,t).$ Take any two points $(x_1, t^{\prime})$ and $(x_2, t^{\prime})$ on the backward characteristics $X_l(x^{\prime \prime}, t^{\prime\prime}, t)$ and $X_r(x^{\prime\prime}, t^{\prime\prime}, t)$, respectively. Then we have
\begin{align*}
    \frac {x^{\prime\prime}-x_2(t^{\prime})}{t^{\prime \prime}-t^{\prime}} \leq \frac{x^{\prime\prime}-x^{\prime}}{t^{\prime\prime}-t^{\prime}}\leq \frac {x^{\prime\prime}-x_1(t^{\prime})}{t^{\prime \prime}-t^{\prime}}.
\end{align*}
Using \eqref{LBC} and simplifying, we obtain
\begin{align}\label{eqq5.8}
\frac {x^{\prime\prime}-x_1(t^{\prime})}{t^{\prime \prime}-t^{\prime}}=&\frac{x^{\prime\prime}}{t^{\prime\prime}-t^{\prime}}\left[1-\frac{(t^{\prime}+\upkappa)-\frac{\upkappa^2}{t^{\prime}+\upkappa}}{(t^{\prime\prime}+\upkappa)-\frac{\upkappa^2}{t^{\prime\prime}+\upkappa}}\right]\nonumber\\
&-\frac{y_*(x^{\prime\prime}, t^{\prime\prime})}{2(t^{\prime\prime}-t^{\prime})}\left[\left(\frac{t^{\prime}+\upkappa}{\upkappa}+\frac{\upkappa}{t^{\prime}+\upkappa}\right)-\frac{\left(\frac{t^{\prime\prime}+\upkappa}{\upkappa}+\frac{\upkappa}{t^{\prime\prime}+\upkappa}\right)\left((t^{\prime}+\upkappa)-\frac{\upkappa^2}{t^{\prime}+\upkappa}\right)}{(t^{\prime\prime}+\upkappa)-\frac{\upkappa^2}{t^{\prime\prime}+\upkappa}}\right].
\end{align}
Similarly, $\frac {x^{\prime\prime}-x_2(t^{\prime})}{t^{\prime \prime}-t^{\prime}}$ can be written in the above form by replacing $y_*(x^{\prime\prime}, t^{\prime\prime})$ by $y^*(x^{\prime\prime}, t^{\prime\prime})$ in \eqref{eqq5.8}. Now passing to the limit as $t^{\prime\prime}, ~t^{\prime} \to t$ in \eqref{eqq5.8}, we get
\begin{align*}
   X^{\prime}(t) \leq x\left[\frac{\left(1+\frac{\upkappa^2}{(t+\upkappa)^2}\right)}{(t+\upkappa)-\frac{\upkappa^2}{t+\upkappa}}\right]-\frac{y_*(x,t)}{2}\left[\frac{\left(\frac{t+\upkappa}{\upkappa}+\frac{\upkappa}{t+\upkappa}\right)}{\left((t+\upkappa)-\frac{\upkappa^2}{t+\upkappa}\right)} \left(1+\frac{\upkappa^2}{(t+\upkappa)^2}\right)-\left(\frac{1}{\upkappa}-\frac{\upkappa}{(t+\upkappa)^2}\right)\right].
\end{align*}
Rearranging the above expression we get \eqref{exf} for the case $y_*(x,t)=y^*(x,t).$
\\\noindent \textbf{Case II.} Let $y_*(x,t)\neq y^*(x,t).$ First we note that
\begin{align}\label{eqq5.9}
    F(y^*(x^{\prime\prime}, t^{\prime\prime}), x^{\prime\prime}, t^{\prime\prime})-F(y_*(x^{\prime},t^{\prime}), x^{\prime\prime}, t^{\prime\prime})\leq F(y^*(x^{\prime\prime}, t^{\prime\prime}), x^{\prime}, t^{\prime})-F(y_*(x^{\prime}, t^{\prime}), x^{\prime}, t^{\prime}).
\end{align}
Inserting the potential \eqref{potential} in \eqref{eqq5.9} and simplifying, we get
\begin{align}\label{eqq5.11}
   &\frac{1}{t^{\prime\prime}-t^{\prime}} \int_{y_*(x^{\prime}, t^{\prime})}^{y^*(x^{\prime\prime}, t^{\prime\prime})}\left[\left(\frac{\eta}{2\upkappa}+\frac{u_0(\eta)}{2}\right)(t^{\prime\prime}-t^{\prime})+\left(\frac{\eta}{2}-\frac{u_0(\eta)\upkappa}{2}\right)\left(\frac{\upkappa}{(t^{\prime\prime}+\upkappa)}-\frac{\upkappa}{t^{\prime}+\upkappa}\right)\right]v_0(\eta)\D{\eta}\nonumber\\
   &\leq \frac{x^{\prime\prime}-x^{\prime}}{t^{\prime\prime}-t^{\prime}}\int_{y_*(x^{\prime}, t^{\prime})}^{y^*(x^{\prime\prime}, t^{\prime\prime})}v_0(\eta)\D{\eta}.
\end{align}
Passing to the limit as $t^{\prime\prime}, t^{\prime} \to t$ in \eqref{eqq5.11} we obtain 
\begin{align}\label{eqq5.12}
    X^{\prime}(t)\geq \frac{1}{\int_{y_*}^{y^*}v_0(\eta) \D{\eta}}\int_{y_*}^{y^*}\left[\left(\frac{\eta}{2\upkappa}+\frac{u_0(\eta)}{2}\right)-\left(\frac{\eta}{2}-\frac{u_0(\eta)\upkappa}{2}\right)\frac{\upkappa}{(t+\upkappa)^2}\right]v_0(\eta)\D{\eta}.
\end{align}
Again, considering the inequality 
\begin{align*}
    F(y^*(x^{\prime}, t^{\prime}), x^{\prime}, t^{\prime})-F(y_*(x^{\prime\prime},t^{\prime\prime}), x^{\prime}, t^{\prime})\leq F(y^*(x^{\prime}, t^{\prime}), x^{\prime\prime}, t^{\prime\prime})-F(y_*(x^{\prime\prime}, t^{\prime\prime}), x^{\prime\prime}, t^{\prime\prime}),
\end{align*}
we get
\begin{align}\label{eqq5.13}
   X^{\prime}(t)\leq \frac{1}{\int_{y_*}^{y^*}v_0(\eta) \D{\eta}}\int_{y_*}^{y^*}\left[\left(\frac{\eta}{2\upkappa}+\frac{u_0(\eta)}{2}\right)-\left(\frac{\eta}{2}-\frac{u_0(\eta)\upkappa}{2}\right)\frac{\upkappa}{(t+\upkappa)^2}\right]v_0(\eta)\D{\eta}. 
\end{align}
Combining \eqref{eqq5.12}-\eqref{eqq5.13}, we obtain \eqref{exf}.
\end{proof}
The next result shows that the curve $X(t)$ can actually be started from ${t=0}.$ The proof can be completed using the arguments  of \cite{haung-wang, Wang-haung} and also see \cite{neumann2022} for a more general case extended to the initial-boundary value problem.
\begin{theorem}
    Let $X(\eta, t)$ be a curve defined in \cref{lem6.2} and $S$ be a countable set of points on the $x$-axis. Then for all $(\eta, 0)\notin S$ there exists a unique Lipschitz continuous curve $x=X(\eta,t), t\geq 0$ such that $X(\eta, 0)=\eta$ and the characteristics triangles associated to the points form an increasing family of sets. Also, for all $(\eta, 0)\notin S$, we have
    \begin{align*}
        \frac{\partial}{\partial t} X(\eta, t)=u(X(\eta, t), t) \,\,\,\, \text{for a.e}\,\,\, t>0.
    \end{align*}
\end{theorem}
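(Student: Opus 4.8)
The idea is to extend the curve $X(\eta,t)$ constructed in \cref{lem6.2} from positive initial times down to $t=0$, for all starting points $\eta$ outside an exceptional countable set $S$. First I would identify $S$ as the set of atoms of the measure $v_0\,\D{x}$ together with the (at most countable) set of $x$ for which the backward characteristic emanating from $(x,t)$ is non-unique for some rational $t>0$; equivalently, the points where $y_*$ and $y^*$ of some approximating family fail to agree in the limit $t\downarrow 0$. For $(\eta,0)\notin S$, one shows $y_*(\eta, t)\to \eta$ and $y^*(\eta,t)\to \eta$ as $t\downarrow 0$, using part (1) of \cref{LM5.1} (monotonicity and semicontinuity of $y_*, y^*$) and the explicit form of the potential $F$ in \eqref{potential}: as $t\to 0$, $F(y,\eta,t)\to \int_0^y(\eta'-\eta)v_0(\eta')\D{\eta'}$ (writing out the bracket in \eqref{potential} at $t=0$ gives the factor $\eta'-\eta$ up to the $\upkappa$-dependent coefficients, which combine to exactly this), whose unique minimizer is $y=\eta$ precisely when $\eta$ is not an atom of $v_0\,\D{x}$.

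The second step is the construction proper. I would fix $(\eta,0)\notin S$ and take a decreasing sequence $t_n\downarrow 0$; by \cref{lem6.2} each point $(\eta, t_n)$ — or rather a point $(x_n,t_n)$ chosen so that its characteristic triangle $\Delta(x_n,t_n)$ contains $(\eta,0)$ on its boundary — determines a curve $X_n(t)$ for $t\geq t_n$ whose associated characteristic triangles form an increasing family. Using property (4) of \cref{LM5.1} (two characteristic triangles at the same time level are either disjoint or nested), these curves are consistent: for $m<n$, $X_n(t)=X_m(t)$ for $t\geq t_m$ once the triangles are correctly aligned, so they paste together into a single curve $X(\eta,t)$ defined for all $t>0$, and one checks $X(\eta,t)\to \eta$ as $t\downarrow 0$ from the convergence $y_*(\eta,t), y^*(\eta,t)\to\eta$ established above. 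Lipschitz continuity on $[0,T]$ follows from the uniform local boundedness of the slopes $X'(t)=u(X(t),t)$, which is exactly the content of Step 1 of \cref{LM4.5} (the bound $|u(x,t)|\le \|u_a\|_\infty(e^{\|\kappa\|_\infty T}-1)+|u_{l,r}|$, here with the explicit $\kappa(t)=1/(t+\upkappa)$ and $u_a(x,t)=x/(t+\upkappa)$ giving an affine-in-$x$ bound, so on a compact $x$-interval the slope is bounded).

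The third step is to verify $\frac{\partial}{\partial t}X(\eta,t)=u(X(\eta,t),t)$ for a.e.\ $t>0$. Since $X(\eta,\cdot)$ is Lipschitz it is differentiable a.e., and at a point $t$ of differentiability the difference-quotient computation in the proof of \cref{lem6.2} (the two-sided bounds leading to \eqref{exf} in both Case I and Case II) identifies the derivative with the expression \eqref{exf} defining $u(X(\eta,t),t)$; the only subtlety is that those bounds were derived using points $t'<t''$ both approaching $t$ from one side, so I would note that the same squeeze works for $t',t''$ straddling $t$, which is automatic once we know $y_*, y^*$ are continuous in $t$ along the curve except at countably many $t$ (part (1) again plus monotonicity of the nested family).

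The main obstacle I anticipate is the behavior at $t=0$: controlling the limit $X(\eta,t)\to\eta$ uniformly enough to get genuine Lipschitz continuity up to $t=0$, rather than mere continuity, and handling the exceptional set cleanly. Atoms of $v_0\,\D{x}$ (allowed since $v_0$ is only locally bounded and $dm$ is a general Lebesgue–Stieltjes measure) are exactly where the minimizer of the $t=0$ potential jumps, so these must go into $S$; one has to argue this set is countable (true for any monotone function $m$) and that off $S$ the squeeze $y_*(\eta,t)\le \eta\le y^*(\eta,t)$ with both sides $\to\eta$ does force $X(\eta,t)\to\eta$. I expect this can be done exactly as in \cite{haung-wang, Wang-haung}, with the only new ingredient being the $\upkappa$-dependent algebra in \eqref{LBC}–\eqref{potential}, which does not change the structure of the argument.
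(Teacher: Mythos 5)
Your plan reconstructs exactly the argument the paper intends: the paper offers no proof of this theorem beyond citing \cite{haung-wang, Wang-haung, neumann2022}, and your steps — a countable exceptional set $S$ coming from non-unique containment of $(\eta,0)$ in characteristic triangles, pasting of the forward curves of \cref{lem6.2} via the nesting/disjointness properties of \cref{LM5.1}, Lipschitz continuity from local boundedness of the slope $u$, and identification of the a.e.\ derivative through the difference-quotient bounds already established in \cref{lem6.2} — are precisely the standard route in those references, and your computation that $F(y,\eta,0)=\int_0^y(\eta'-\eta)v_0(\eta')\,\D{\eta'}$ has unique minimizer $y=\eta$ is correct. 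Two minor touch-ups: since $v_0$ is a locally bounded positive function here, the ``atoms of $v_0\,\D{x}$'' portion of $S$ is vacuous, and the limit $X(\eta,t)\to\eta$ (with the Lipschitz bound down to $t=0$) is better extracted from the bracketing $y_*(X(\eta,t),t)\le\eta\le y^*(X(\eta,t),t)$ combined with the $\mathcal{O}(t)$ horizontal width of the characteristic triangle, rather than from the convergence of $y_*(\eta,t)$ at the fixed abscissa $\eta$; likewise the slope bound should be read off the averaged-initial-data form of \eqref{exf} rather than from \cref{LM4.5}, which belongs to Part I.
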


\subsection{Verification of weak formulation, entropy criterion, and initial condition} The goal of this section is to prove \cref{TH1.3}. We verify that the pair $(m,u)$ satisfies the weak formulation, Lax entropy criterion, and the initial condition.
\subsubsection{Verification of weak formulation \cref{weak formulation}-\eqref{wf2}} Verification of weak formulation \eqref{weak formulation}-\eqref{wf2} consists of several steps. First, let us define the momentum and energy potentials as follows:
\begin{align}
    &q(x,t)=\int_0^{y_*(x,t)}\left[\left(\frac{\eta}{2\upkappa}+\frac{u_0(\eta)}{2}\right)-\left(\frac{\eta}{2}-\frac{u_0(\eta)\upkappa}{2}\right)\frac{\upkappa}{(t+\upkappa)^2}\right]v_0(\eta)\D{\eta},\nonumber\\
    &E(x,t)=\frac{1}{2}\int_0^{y_*(x,t)}\left[\left(\frac{\eta}{2\upkappa}+\frac{u_0(\eta)}{2}\right)-\left(\frac{\eta}{2}-\frac{u_0(\eta)\upkappa}{2}\right)\frac{\upkappa}{(t+\upkappa)^2}\right]v_0(\eta)u(X(\eta,t),t)\D{\eta},\nonumber
    \end{align}
    and the functionals
    \begin{align}
    &H(y, x,t)=\int_0^{y}\left[\left(\frac{\eta}{2\upkappa}+\frac{u_0(\eta)}{2}\right)-\left(\frac{\eta}{2}-\frac{u_0(\eta)\upkappa}{2}\right)\frac{\upkappa}{(t+\upkappa)^2}\right]v_0(\eta)\left(X(\eta,t)-x\right)\D{\eta},\nonumber\\
    &I(y,x,t)=\int_0^y \left(\frac{\eta}{2}-\frac{u_0(\eta)\upkappa}{2}\right)\frac{2\upkappa v_0(\eta)}{(t+\upkappa)^3}\left(X(\eta,t)-x\right)\D{\eta},\nonumber\\
    &J(y,x,t)=\int_0^y\left(\frac{\eta}{2}-\frac{u_0(\eta)\upkappa}{2}\right)\frac{2\upkappa v_0(\eta)}{(t+\upkappa)^3}\D{\eta}.\nonumber
\end{align}

\vspace{.5cm}
\noindent{\bf Step 1.} The following relations hold:
\begin{align}\label{mass relation}
    (i)\,\,\,\D{q}=u\D{m}, \,\,\,\, (ii)\,\,\,\D{E}=\frac{1}{2}u^2 \D{m}.
\end{align}
in the sense of Radon-Nikodym derivatives in $x.$\\
\noindent{\it Proof of Step 1.} If $y_*(x,t)$ is a constant in a neighbourhood of $(x,t),$ then \eqref{mass relation} holds trivially. Assume that $y_*(x,t)$ is not constant in a neighbourhood of $(x,t)$ and $y_*(x,t)=y^*(x,t).$ For a fixed $t>0,$ let $x_1<x<x_2.$ By using the definition, we have
\begin{align*}
    F(y_*(x_1,t), x_1, t) \leq F(y_*(x_2,t), x_1, t).
\end{align*}
Then, we have
\begin{align*}
    \int_{y_*(x_1,t)}^{y_*(x_2,t)} \frac{u_0(\eta)}{2} \left((t+\upkappa)-\frac{\upkappa^2}{(t+\upkappa)}\right)v_0(\eta)\D{\eta}
    &\leq \int_{y_*(x_1,t)}^{y_*(x_2,t)} \left(x_1-\frac{\eta}{2}\left(\frac{t+\upkappa}{\upkappa}+\frac{\upkappa}{t+\upkappa}\right)\right)v_0(\eta)\D{\eta}\\
    &\leq\int_{y_*(x_1,t)}^{y_*(x_2,t)} \left(x_1-\frac{y_*(x_1,t)}{2}\left(\frac{t+\upkappa}{\upkappa}+\frac{\upkappa}{t+\upkappa}\right)\right)v_0(\eta)\D{\eta}.
\end{align*}
This implies
\begin{align*}
    &\int_{y_*(x_1,t)}^{y^*(x_2,t)}\left[\left(\frac{\eta}{2\upkappa}+\frac{u_0(\eta)}{2}\right)-\left(\frac{\eta}{2}-\frac{u_0(\eta)\upkappa}{2}\right)\frac{\upkappa}{(t+\upkappa)^2}\right]v_0(\eta)\D{\eta}\\
    &\leq \frac{1+\frac{\upkappa^2}{(t+\upkappa)^2}}{(t+\upkappa)-\frac{\upkappa^2}{(t+\upkappa)}}\left[x-\frac{1}{2}\left(\frac{t+\upkappa}{\upkappa}+\frac{\upkappa}{t+\upkappa}\right)y_*(x_1,t)\right]+\frac{y_*(x_2,t)}{2}\left(\frac{1}{\upkappa}-\frac{\upkappa}{(t+\upkappa)^2}\right)\int_{y_*(x_1,t)}^{y_*(x_2,t)}v_0(\eta)\D{\eta}.
\end{align*}
Hence, we obtain
\begin{align}\label{eqq5.21}
    \frac{q(x_1,t)-q(x_2,t)}{m(x_1,t)-m(x_2,t)} \leq \frac{1+\frac{\upkappa^2}{(t+\upkappa)^2}}{(t+\upkappa)-\frac{\upkappa^2}{(t+\upkappa)}}\left[x-\frac{1}{2}\left(\frac{t+\upkappa}{\upkappa}+\frac{\upkappa}{t+\upkappa}\right)y_*(x_1,t)\right]+\frac{y_*(x_2,t)}{2}\left(\frac{1}{\upkappa}-\frac{\upkappa}{(t+\upkappa)^2}\right).
\end{align}
Now passing to the limit as $x_1\nearrow x$ and $x_2\searrow x$ in \eqref{eqq5.21} we get
\begin{align*}
    \lim_{x_1, x_2 \to x}\frac{q(x_1,t)-q(x_2,t)}{m(x_1,t)-m(x_2,t)} \leq u(x,t).  
\end{align*}
Similarly, considering the inequality
\begin{align*}
    F(y_*(x_2,t), x_2, t) \leq F(y_*(x_1,t), x_2, t),
\end{align*}
and following the same argument as above one can easily obtain the other way inequality
\begin{align*}
    \lim_{x_1, x_2 \to x}\frac{q(x_1,t)-q(x_2,t)}{m(x_1,t)-m(x_2,t)} \geq u(x,t),
\end{align*}
and this proves $(i).$\\
If $y_*(x,t)<y^*(x,t),$ then
\begin{align*}
    \lim_{x_1, x_2 \to x}\frac{q(x_1,t)-q(x_2,t)}{m(x_1,t)-m(x_2,t)}&=\lim_{x_1, x_2 \to x}\frac{\int_{y_*(x_1,t)}^{y_*(x_2,t)}\left[\left(\frac{\eta}{2\upkappa}+\frac{u_0(\eta)}{2}\right)-\left(\frac{\eta}{2}-\frac{u_0(\eta)\upkappa}{2}\right)\frac{\upkappa}{(t+\upkappa)^2}\right]v_0(\eta)\D{\eta}}{\int_{y_*(x_1,t)}^{y^*(x_2,t)}v_0(\eta)\D{\eta}}=u(x,t).
\end{align*}
This completes the proof.

\vspace{1cm}
Proof of $(ii)$ follows a similar argument. First consider the case $y_*(x,t)=y^*(x,t).$ For $t>0$ and $x_1<x<x_2$, we have
\begin{align}\label{eqq5.22}
    E(x_2,t)-E(x_1,t)=\frac{1}{2}\int_{y_*(x_1,t)}^{y_*(x_2,t)}\left[\left(\frac{\eta}{2\upkappa}+\frac{u_0(\eta)}{2}\right)-\left(\frac{\eta}{2}-\frac{u_0(\eta)\upkappa}{2}\right)\frac{\upkappa}{(t+\upkappa)^2}\right]v_0(\eta)u(X(\eta,t),t)\D{\eta}.
\end{align}

\vspace{.5cm}
On the other hand, for $y_*(x_1,t)\leq \eta \leq y_*(x_2,t)$, we have
\begin{align}\label{eqq5.23}
    &\frac{1+\frac{\upkappa^2}{(t+\upkappa)^2}}{(t+\upkappa)-\frac{\upkappa^2}{(t+\upkappa)}}\left[x-\frac{1}{2}\left(\frac{t+\upkappa}{\upkappa}+\frac{\upkappa}{t+\upkappa}\right)y_*(x_1,t)\right]+\frac{y_*(x_1,t)}{2}\left(\frac{1}{\upkappa}-\frac{\upkappa}{(t+\upkappa)^2}\right)\nonumber\\
    &\leq u(X(\eta,t),t)\leq \frac{1+\frac{\upkappa^2}{(t+\upkappa)^2}}{(t+\upkappa)-\frac{\upkappa^2}{(t+\upkappa)}}\left[x-\frac{1}{2}\left(\frac{t+\upkappa}{\upkappa}+\frac{\upkappa}{t+\upkappa}\right)y_*(x_2,t)\right]+\frac{y_*(x_2,t)}{2}\left(\frac{1}{\upkappa}-\frac{\upkappa}{(t+\upkappa)^2}\right).
\end{align}
From \eqref{eqq5.22}-\eqref{eqq5.23}, we get
\begin{align}\label{eqq5.24}
    &\frac{1}{2}\frac{1+\frac{\upkappa^2}{(t+\upkappa)^2}}{(t+\upkappa)-\frac{\upkappa^2}{(t+\upkappa)}}\left[x-\frac{1}{2}\left(\frac{t+\upkappa}{\upkappa}+\frac{\upkappa}{t+\upkappa}\right)y_*(x_1,t)\right]+\frac{y_*(x_1,t)}{2}\left(\frac{1}{\upkappa}-\frac{\upkappa}{(t+\upkappa)^2}\right)\nonumber\\
    &\leq \frac{E(x_2,t)-E(x_1,t)}{q(x_2,t)-q(x_1,t)}\leq \frac{1}{2}\frac{1+\frac{\upkappa^2}{(t+\upkappa)^2}}{(t+\upkappa)-\frac{\upkappa^2}{(t+\upkappa)}}\left[x-\frac{1}{2}\left(\frac{t+\upkappa}{\upkappa}+\frac{\upkappa}{t+\upkappa}\right)y_*(x_2,t)\right]+\frac{y_*(x_2,t)}{2}\left(\frac{1}{\upkappa}-\frac{\upkappa}{(t+\upkappa)^2}\right).
\end{align}
Passing to the limit $x_1\nearrow x$ and $x_2\searrow x$ in \eqref{eqq5.24}, we obtain
\begin{align*}
    \lim_{x_1,x_2 \to x}\frac{E(x_2,t)-E(x_1,t)}{q(x_2,t)-q(x_1,t)}=\frac{1}{2}u(x,t)
\end{align*}
and consequently, by $(i),$ we find $\D{E}=\frac{1}{2}u^2\D{m}.$\\
Now if $y_*(x,t) < y^*(x,t),$ it is straightforward to see
\begin{align*}
    \lim_{x_1,x_2 \to x} \frac{E(x_2,t)-E(x_1,t)}{q(x_2,t)-q(x_1,t)}=\frac{1}{2}u(x,t)
\end{align*}
where we used the fact that $y_*(x_1,t) \to y_*(x,t)$ and $y_*(x_2,t)\to y^*(x,t)$ as $x_1, x_2 \to x$ and for $\eta \in [y_*(x,t), y^*(x,t)],$ we have $u(X(\eta,t),t)=u(x,t).$ This completes the proof of $(ii).$

\vspace{.5cm}
\noindent{\bf Step 2.} Define $F(x,t)=\min\limits_{y \in \RR}F(y,x,t).$ Then the following hold 
\begin{align}
    &(i)\,\,\,\frac{\partial}{\partial x}F(x,t)=-m(x,t),\label{eqq5.25}\\
    &(ii)\,\,\, \frac{\partial}{\partial t}F(x,t)=q(x,t)\label{eqq5.26}.
\end{align}
\noindent{\it Proof of Step 2.} To prove \eqref{eqq5.25}, first we fix $t>0$ and choose any two points $x_1,x_2 \in \RR.$ We claim that
\begin{align}\label{eqq6.27}
    \int_{x_1}^{x_2}m(x,t)\D{x}=F(x_2,t)-F(x_1,t).
\end{align}
Take any $x,x^{\prime}\in [x_1,x_2]$ with $x<x^{\prime}.$ It is enough to prove
\begin{align}\label{eqq5.27}
    (x^{\prime}-x)m(x,t) \leq F(x,t)-F(x^{\prime},t)\leq (x^{\prime}-x)m(x^{\prime},t).
\end{align}
The inequality \eqref{eqq5.27} can be proved by considering
\begin{align}
    F(x,t)-F(x^{\prime},t)&=F(y_*(x,t),x,t)-F(y_*(x^{\prime},t),x^{\prime},t)\nonumber\\
    &=[F(y_*(x,t),x,t)-F(y_*(x,t), x^{\prime},t)]+[F(y_*(x,t),x^{\prime},t)-F(y_*(x^{\prime},t),x^{\prime},t)]\label{eqq5.28}.
\end{align}
It is worth noting that the second term in \eqref{eqq5.28} is positive, leading us to the following inequality,
\begin{align*}
  F(x,t)-F(x^{\prime},t)\geq F(y_*(x,t),x,t)-F(y_*(x,t), x^{\prime},t).
\end{align*}
Similarly, we have
\begin{align*}
    F(x,t)-F(x^{\prime},t)\leq F(y_*(x^{\prime},t),x,t)-F(y_*(x,t^{\prime}),x,t^{\prime}).
\end{align*}
Combining the above two inequalities, we conclude \eqref{eqq5.27}. Note that, since $y_*(x,t)$ is increasing in $x$ and $v_0>0,$ we have $m(x,t)$ is also increasing in $x.$ Therefore, $m(x,t)$ is Riemann integrable. Now taking Riemann sum and using the inequality \eqref{eqq5.27}, we conclude \eqref{eqq6.27} and hence \eqref{eqq5.25}. 

To prove \eqref{eqq5.26}, we first fix $x\in \RR$ and choose any $t,t+h\in (0, \infty).$ Then we have
\begin{align*}
  F(x,t+h)-F(x,t)&=F(y_*(x,t+h),x,t+h)-F(y_*(x,t),x,t)\\
  &=[F(y_*(x,t+h),x,t+h)-F(y_*(x,t+h),x,t)]\\
  &+[F(y_*(x,t+h),x,t)-F(y_*(x,t),x,t)].
\end{align*}
Again, using minimization of $F,$ we obtain
\begin{align*}
    F(x,t+h)-F(x,t)\geq F(y_*(x,t+h),x,t+h)-F(y_*(x,t+h),x,t).
\end{align*}
Similarly, we have
\begin{align*}
    F(x,t+h)-F(x,t) \leq F(y_*(x,t), x,t+h)-F(y_*(x,t),x,t).
\end{align*}
Combining the above two inequalities, we get
\begin{align}\label{eqq6.30}
    &\int_{0}^{y_*(x,t+h)} \left[\left(\frac{\eta}{2\upkappa}+\frac{u_0(\eta)}{2}\right)+\left(\frac{\eta}{2}-\frac{u_0(\eta)\upkappa}{2}\right)\frac{1}{h}\left(\frac{\upkappa}{t+h+\upkappa}-\frac{\upkappa}{t+\upkappa}\right)\right]v_0(\eta)\D{\eta}\nonumber\\
    &\leq \frac{F(x,t+h)-F(x,t)}{h}\leq \int_{0}^{y_*(x,t)} \left[\left(\frac{\eta}{2\upkappa}+\frac{u_0(\eta)}{2}\right)+\left(\frac{\eta}{2}-\frac{u_0(\eta)\upkappa}{2}\right)\frac{1}{h}\left(\frac{\upkappa}{t+h+\upkappa}-\frac{\upkappa}{t+\upkappa}\right)\right]v_0(\eta)\D{\eta}.
\end{align}
As we know $y_*(x,t+h) \to y_*(x,t)$ as $h\to 0$. Then passing to the limit as $h\to 0$ in the above inequality \eqref{eqq6.30}, we obtain \eqref{eqq5.26}. This completes the proof.

\vspace{.5cm}
\noindent{\bf Step 3.} Now we show that $(m,u)$ satisfies the first equation of the weak formulation \eqref{weak formulation}. For a test function $\varphi$ with compact support in $\RR\times ]0,\infty[$ we infer using the step 1 and step 2:
\begin{align}\label{eqq6.33}
0&=\iint [F_x \varphi_t (x, t)- F_t \varphi_x (x, t)] \D{x} \D{t} 
=\iint [m(x,t) \varphi_t (x, t)- q(x,t)\varphi_x (x, t)] \D{x} \D{t}\nonumber\\
&=\iint  [m(x,t) \varphi_t (x, t)\D{x} \D{t}- \iint u(x,t)\varphi(x, t)\D{m} \D{t}\,.
\end{align}
This identity proves that $(m, u)$ satisfies the first equation of the system \eqref{second system}.

\vspace{.5cm}
\noindent{\bf Step 4.} Define $H(x,t)=\min\limits_{y\in \RR}H(y,x,t)$ and $I(x,t)=\min\limits_{y\in \RR}I(y,x,t).$ Then the following relations hold:
\begin{align}
    &(i)\,\,\,\frac{\partial}{\partial x}H(x,t)=-q(x,t),\label{eqq6.34}\\
    &(ii)\,\,\, \frac{\partial}{\partial t}H(x,t)=2E(x,t)+I(x,t)\label{eqq6.35}.
\end{align}
\noindent{\it Proof of Step 4.} To prove \eqref{eqq6.34}, following the previous argument, we fix $t>0$ and choose any two points $x_1,x_2 \in \RR$  and claim that
\begin{align*}
    \int_{x_1}^{x_2}q(x,t)\D{x}=H(x_2,t)-H(x_1,t).
\end{align*}
Take any $x,x^{\prime}\in [x_1,x_2]$ with $x<x^{\prime}.$ Exactly as in the proof of \eqref{eqq5.25}, we show
\begin{align*}
    (x^{\prime}-x)q(x,t) \leq H(x,t)-H(x^{\prime},t)\leq (x^{\prime}-x)q(x^{\prime},t),
\end{align*}
and this completes the proof of \eqref{eqq6.34}.\\
To prove \eqref{eqq6.35}, first we fix $x$ and for $t, t+h \in (0, \infty),$ and following the proof of \eqref{eqq5.26}, we have
\begin{align*}
     H(x,t+h)-H(x,t)\geq H(y_*(x,t+h),x,t+h)-H(y_*(x,t+h),x,t).
\end{align*}
Therefore, we have
\begin{align}
    \frac{H(x,t+h)-H(x,t)}{h}&\geq \int_0^{y_*(x,t+h)}v_0(\eta)\left(\frac{\eta}{2\upkappa}+\frac{u_0(\eta)}{2}\right)\frac{\left(X(\eta, t+h)-X(\eta,t)\right)}{h}\D{\eta}\nonumber\\
    &-\int_0^{y_*(x,t+h)}v_0(\eta)\left(\frac{\eta}{2}-\frac{u_0(\eta)\upkappa}{2}\right)\frac{1}{h}\left[\frac{\upkappa X(\eta, t+h)}{(t+h+\upkappa)^2}-\frac{\upkappa X(\eta, t)}{(t+\upkappa)^2}\right]\D{\eta}\nonumber\\
    &+x \int_0^{y_*(x,t+h)}\left(\frac{\eta}{2}-\frac{u_0(\eta)\upkappa}{2}\right)\frac{1}{h}\left[\frac{\upkappa}{(t+h+\upkappa)^2}-\frac{\upkappa}{(t+\upkappa)^2}\right]v_0(\eta)\D{\eta}\label{eqq6.38}.
\end{align}
Now passing to the limit as $h\to 0$ in \eqref{eqq6.38} and using $\frac{\D}{\D{t}}X(\eta,t)=u(X(\eta,t),t),$ we get
\begin{align*}
    \frac{\partial}{\partial t}H(x,t)&\geq \int_0^{y_*(x,t)}v_0(\eta)\left(\frac{\eta}{2\upkappa}+\frac{u_0(\eta)}{2}\right) u(X(\eta,t),t)\D{\eta}\\
    &-\int_0^{y_*(x,t)}v_0(\eta)\left(\frac{\eta}{2}-\frac{u_0(\eta)\upkappa}{2}\right)\left[\frac{\upkappa u(X(\eta,t),t)}{(t+\upkappa)^2}-\frac{2\upkappa X(\eta,t)}{(t+\upkappa)^3}\right]\D{\eta}\\
    &-x\int_0^{y_*(x,t)}v_0(\eta)\left(\frac{\eta}{2}-\frac{u_0(\eta)\upkappa}{2}\right)\frac{2\upkappa}{(t+\upkappa)^3}\D{\eta}\\
    &=2E(x,t)+I(x,t).
\end{align*}
Similarly, considering the inequality
\begin{align*}
    H(x,t+h)-H(x,t) \leq H(y_*(x,t), x,t+h)-H(y_*(x,t),x,t),
\end{align*}
we obtain
\begin{align*}
    \frac{\partial}{\partial t}H(x,t)\leq 2E(x,t)+I(x,t),
\end{align*}
and this completes the proof.

\vspace{.7cm}
\noindent{\bf Step 5.} Define $J(x,t)=\min\limits_{y\in \RR}J(y,x,t).$ Then we have the following relations:
\begin{align}
    &(i)\,\,\,\frac{\partial}{\partial x}I(x,t)=-J(x,t),\label{eqq6.39}\\
    &(ii)\,\,\, \D{J}=\frac{1}{t+\upkappa}\left(\frac{x}{t+\upkappa}-u\right)\D{m},\label{eqq6.40}
\end{align}
in the sense of Radon-Nikodym derivatives in $x.$\\
\noindent{\it Proof of Step 5.} The proof of \eqref{eqq6.39} is exactly same as the proof of \eqref{eqq5.25} and thus we omit it.

\noindent To prove \eqref{eqq6.40}, we consider the following two cases.\\
\noindent{\bf Case I.} Let $y_*(x,t)=y^*(x,t).$ First of all, after a simplification, we obtain
\begin{align*}
    \frac{1}{t+\upkappa}\left(\frac{x}{t+\upkappa}-u(x,t)\right)=\frac{2 \upkappa}{(t+\upkappa)\left((t+\upkappa)^2-k^2\right)}\left[y_*(x,t)-\frac{\upkappa}{(t+\upkappa)}x\right].
\end{align*}
On the other hand, fixing $t>0$ and taking $x_1<x<x_2,$ by the definition of potential we have
\begin{align*}
    F(y_*(x_2,t), x_2,t)\leq F(y_*(x_1,t),x_2,t).
\end{align*}
This implies
\begin{align*}
\int_{y_*(x_1,t)}^{y_*(x_2,t)}\frac{u_0(\eta)}{2}\frac{(t+\upkappa)^2-\upkappa^2}{(t+\upkappa)}v_0(\eta)\D{\eta}\leq \int_{y_*(x_1,t)}^{y_*(x_2,t)}\left[x_2-\frac{\eta}{2}\frac{(t+\upkappa)^2+\upkappa^2}{\upkappa (t+\upkappa)}\right]v_0(\eta)\D{\eta}.
\end{align*}
Multiplying $\frac{2\upkappa^2}{(t+\upkappa)^2((t+\upkappa)^2-\upkappa^2)}\geq 0$ in both sides of the above inequality and simplifying, we get
\begin{align*}
   & \int_{y_*(x_1,t)}^{y_*(x_2,t)}\frac{u_0(\eta)}{2}\frac{2\upkappa^2}{(t+\upkappa)^3}v_0(\eta)\D{\eta}\\
    &\leq \frac{2\upkappa}{(t+\upkappa)}\int_{y_*(x_1,t)}^{y_*(x_2,t)} \left[\frac{\upkappa x_2}{(t+\upkappa)((t+\upkappa)^2-\upkappa^2)}-\frac{\eta}{2} \frac{(t+\upkappa)^2+\upkappa^2)}{(t+\upkappa)^2((t+\upkappa)^2-\upkappa^2)}\right]v_0(\eta)\D{\eta}\\
    &=\frac{2\upkappa}{(t+\upkappa)}\int_{y_*(x_1,t)}^{y_*(x_2,t)} \left[\frac{\upkappa x_2}{(t+\upkappa)((t+\upkappa)^2-\upkappa^2)}+\frac{\eta}{2}\left(\frac{1}{(t+\upkappa)^2}-\frac{2}{(t+\upkappa)^2-\upkappa^2}\right)\right]v_0(\eta)\D{\eta}.
\end{align*}
This implies
\begin{align*}
    &\int_{y_*(x_1,t)}^{y_*(x_2,t)}\left[\frac{2\upkappa}{(t+\upkappa)((t+\upkappa)^2-\upkappa^2)} \cdot \eta-\frac{2\upkappa^2 x_2}{(t+\upkappa)((t+\upkappa)^2-\upkappa^2)}\right]v_0(\eta)\D{\eta}\\
    &\leq\int_{y_*(x_1,t)}^{y_*(x_2,t)} \left(\frac{\eta}{2}-\frac{u_0(\eta) \upkappa}{2}\right) \frac{2\upkappa v_0(\eta)}{(t+\upkappa)^3}\D{\eta}=J(x_2,t)-J(x_1,t),
\end{align*}
and finally, we obtain
\begin{align*}
    \frac{2\upkappa}{(t+\upkappa)((t+\upkappa)^2-\upkappa^2)}\left[y_*(x_2,t)-\frac{\upkappa}{(t+\upkappa)}x_2\right] \leq \frac{J(x_2,t)-J(x_1,t)}{m(x_2,t)-m(x_1,t)}.
\end{align*}
Hence
\begin{align*}
    \lim_{x_1,x_2 \to x}\frac{J(x_2,t)-J(x_1,t)}{m(x_2,t)-m(x_1,t)}&\geq \frac{2\upkappa}{(t+\upkappa)((t+\upkappa)^2-\upkappa^2)}\left[y_*(x,t)-\frac{\upkappa}{(t+\upkappa)}x\right]\\
    &=\frac{1}{t+\upkappa}\left(\frac{x}{t+\upkappa}-u(x,t)\right).
\end{align*}
Similarly, considering the inequality
\begin{align*}
    F(y_*(x_1,t), x_1,t) \leq F(y_*(x_2, t), x_1, t),
\end{align*}
we get 
\begin{align*}
    \lim_{x_1,x_2 \to x}\frac{J(x_2,t)-J(x_1,t)}{m(x_2,t)-m(x_1,t)} \leq \frac{1}{t+\upkappa}\left(\frac{x}{t+\upkappa}-u(x,t)\right),
\end{align*}
and this completes the proof.

\vspace{.5cm}
\noindent{\bf Case II.} Let $y_*(x,t)<y^*(x,t).$ In this case we have
\begin{align}
    \frac{J(x_1,t)-J(x_2,t)}{m(x_1,t)-m(x_2,t)}=&\frac{2}{t+\upkappa}\cdot\frac{\int_{y_*(x_1,t)}^{y_*(x_2,t)}\left[\left(\frac{\eta}{2}-\frac{u_0(\eta)\upkappa}{2}\right)\frac{\upkappa}{(t+\upkappa)^2}-\left(\frac{\eta}{2\upkappa}+\frac{u_0(\eta)}{2}\right)\right]v_0(\eta)\D{\eta}}{\int_{y_*(x_1,t)}^{y_*(x_2,t)}v_0(\eta)\D{\eta}}\nonumber\\
    &+\frac{2}{t+\upkappa}\cdot \frac{\int_{y_*(x_1,t)}^{y_*(x_2,t)}\left(\frac{\eta}{2\upkappa}+\frac{u_0(\eta)}{2}\right)v_0(\eta)\D{\eta}}{\int_{y_*(x_1,t)}^{y_*(x_2,t)}v_0(\eta)\D{\eta}}\nonumber\\
    =&\frac{1}{t+\upkappa}\cdot \frac{\int_{y_*(x_1,t)}^{y_*(x_2,t)}\left[\left(\frac{\eta}{2}-\frac{u_0(\eta)\upkappa}{2}\right)\frac{\upkappa}{(t+\upkappa)^2}-\left(\frac{\eta}{2\upkappa}+\frac{u_0(\eta)}{2}\right)\right]v_0(\eta)\D{\eta}}{\int_{y_*(x_1,t)}^{y_*(x_2,t)}v_0(\eta)\D{\eta}}\nonumber\\
    &+\frac{1}{t+\upkappa}\cdot \frac{\int_{y_*(x_1,t)}^{y_*(x_2,t)}\left[\left(\frac{\eta}{2}-\frac{u_0(\eta)\upkappa}{2}\right)\frac{\upkappa}{(t+\upkappa)^2}+\left(\frac{\eta}{2\upkappa}+\frac{u_0(\eta)}{2}\right)\right]v_0(\eta)\D{\eta}}{\int_{y_*(x_1,t)}^{y_*(x_2,t)}v_0(\eta)\D{\eta}}\label{eqq6.41}.
\end{align}
Using the inequality $F(y_*(x_1,t),x_1,t) \leq F(y_*(x_2,t),x_1,t),$ we get
\begin{align}\label{eqq6.42}
    \frac{x_1}{t+\upkappa} \leq \frac{\int_{y_*(x_1,t)}^{y_*(x_2,t)}\left[\left(\frac{\eta}{2}-\frac{u_0(\eta)\upkappa}{2}\right)\frac{\upkappa}{(t+\upkappa)^2}+\left(\frac{\eta}{2\upkappa}+\frac{u_0(\eta)}{2}\right)\right]v_0(\eta)\D{\eta}}{\int_{y_*(x_1,t)}^{y_*(x_2,t)}v_0(\eta)\D{\eta}}.
\end{align}
Therefore plugging \eqref{eqq6.42} into \eqref{eqq6.41}, we have
\begin{align*}
   \lim_{x_1,x_2 \to x}\frac{J(x_1,t)-J(x_2,t)}{m(x_1,t)-m(x_2,t)}&\geq \frac{1}{t+\upkappa}\cdot \frac{\int_{y_*(x,t)}^{y^*(x,t)}\left[\left(\frac{\eta}{2}-\frac{u_0(\eta)\upkappa}{2}\right)\frac{\upkappa}{(t+\upkappa)^2}-\left(\frac{\eta}{2\upkappa}+\frac{u_0(\eta)}{2}\right)\right]v_0(\eta)\D{\eta}}{\int_{y_*(x,t)}^{y^*(x,t)}v_0(\eta)\D{\eta}} 
   +\frac{x}{(t+\upkappa)^2}\\
   &=\frac{1}{t+\upkappa}\left(\frac{x}{t+\upkappa}-u(x,t)\right).
\end{align*}
Similarly considering the inequality 
\begin{align*}
    F(y_*(x_2,t),x_2,t)\leq F(y_*(x_1,t),x_2,t),
\end{align*}
we obtain
\begin{align*}
    \lim_{x_1,x_2 \to x}\frac{J(x_1,t)-J(x_2,t)}{m(x_1,t)-m(x_2,t)}\leq \frac{1}{t+\upkappa}\left(\frac{x}{t+\upkappa}-u(x,t)\right). 
\end{align*}
This proves $(ii).$

\vspace{.5cm}
\noindent{\bf Step 6.} We show that the pair $(m,u)$ satisfies the second equation of \eqref{wf2}. For that, we have
\begin{align}
    0&=\iint [H_x \varphi_{t x} (x, t)- H_t \varphi_{xx} (x, t)] \D{x} \D{t}
    =-\iint [q \varphi_{tx}+(2E+I)\varphi_{xx}]\D{x}\D{t}\nonumber\\
    &=\iint [q_x \varphi_t+(2E+I)_x \varphi_x]\D{x}\D{t}=\iint [u \varphi_t +u^2 \varphi_x]\D{m}\D{t}-\iint J \varphi_x \D{x}\D{t}\nonumber \\
    &=\iint [u \varphi_t +u^2 \varphi_x]\D{m}\D{t}+\iint J_x \varphi \D{x}\D{t}=\iint u \varphi_t +u^2 \varphi_x +\frac{1}{t+\upkappa} \left(\frac{x}{t+\upkappa}-u\right) \varphi \D{m}\D{t}\label{eqq6.43}.
\end{align}
The identity \eqref{eqq6.43} combined with \eqref{eqq6.33} completes the proof of the weak formulation.
\subsubsection{Entropy criterion} Now we show that $(m,u)$ satisfies the Oleinik type entropy condition. For any discontinuity point $(x,t),$ after simplifying and considering the construction of solution $u(x,t),$ we obtain the following expressions.
\begin{align*}
    u(x-0, t)=\frac{(t+\upkappa)^2+\upkappa^2}{(t+\upkappa)((t+\upkappa)^2-\upkappa^2)}x-\frac{2\upkappa}{(t+\upkappa)^2-\upkappa^2}y_*(x,t),\\\\
    u(x+0, t)=\frac{(t+\upkappa)^2+\upkappa^2}{(t+\upkappa)((t+\upkappa)^2-\upkappa^2)}x-\frac{2\upkappa}{(t+\upkappa)^2-\upkappa^2}y^*(x,t).
\end{align*}
Since $(x,t)$ is a point of discontinuity, $y_*(x,t)<y^*(x,t)$ and using $F(y_*(x,t), x,t)=F(y^*(x,t), x,t),$ we have
\begin{align*}
    \int_{y_*(x,t)}^{y^*(x,t)}\left[\frac{\eta}{2} \cdot\frac{(t+\upkappa)^2+\upkappa^2}{\upkappa(t+\upkappa)}+\frac{u_0(\eta)}{2}\cdot \frac{(t+\upkappa)^2-\upkappa^2}{(t+\upkappa)}-x\right] v_0(\eta)\D{\eta}=0.
\end{align*}
Multiplying $\frac{(t+\upkappa)^2+\upkappa^2}{(t+\upkappa)((t+\upkappa)^2-\upkappa^2)}$ in the above equation and rearranging the terms, we obtain
\begin{align}
    &\int_{y_*(x,t)}^{y^*(x,t)} \left[\frac{\eta}{2} \cdot \frac{(t+\upkappa)^2-\upkappa^2}{\upkappa (t+\upkappa)^2}+\frac{u_0(\eta)}{2}\cdot \frac{(t+\upkappa)^2+\upkappa^2}{(t+\upkappa)^2}\right]v_0(\eta)\D{\eta}\nonumber\\
    &=\int_{y_*(x,t)}^{y^*(x,t)} \left[\frac{(t+\upkappa)^2+\upkappa^2}{(t+\upkappa)((t+\upkappa)^2-\upkappa^2)}x-\eta \cdot \frac{2\upkappa}{(t+\upkappa)^2-\upkappa^2}\right]v_0(\eta)\D{\eta}\label{eqq6.44}\\
    &\leq \left[\frac{(t+\upkappa)^2+\upkappa^2}{(t+\upkappa)((t+\upkappa)^2-\upkappa^2)}x-y_*(x,t) \cdot \frac{2\upkappa}{(t+\upkappa)^2-\upkappa^2}\right] \int_{y_*(x,t)}^{y^*(x,t)}v_0(\eta)\D{\eta}\nonumber\\
    &=u(x-0,t)\int_{y_*(x,t)}^{y^*(x,t)}v_0(\eta)\D{\eta}\label{eqq6.45}.
\end{align}
Again, by $y^*(x,t)$ in \eqref{eqq6.44}, we get the other way inequality
\begin{align}\label{eqq6.46}
   \int_{y_*(x,t)}^{y^*(x,t)} \left[\frac{\eta}{2} \cdot \frac{(t+\upkappa)^2-\upkappa^2}{\upkappa (t+\upkappa)^2}+\frac{u_0(\eta)}{2}\cdot \frac{(t+\upkappa)^2+\upkappa^2}{(t+\upkappa)^2}\right]v_0(\eta)\D{\eta} \geq u(x+0, t)\int_{y_*(x,t)}^{y^*(x,t)}v_0(\eta)\D{\eta}.
\end{align}
Therefore, combining \eqref{eqq6.45} and \eqref{eqq6.46}, we have $u(x+0,t)\leq u(x,t) \leq u(x-0,t).$ 

Furthermore, for any $x_1\neq x_2,$ we have
\begin{align*}
    \frac{u(x_2,t)-u(x_1,t)}{x_2-x_1}\leq \frac{u(x_2-0,t)-u(x_1+0,t)}{x_2-x_1}\leq \frac{(t+\upkappa)^2+\upkappa^2}{(t+\upkappa)((t+\upkappa)^2-\upkappa^2)}.
\end{align*}
\subsubsection{Verification of initial condition} In this section, we show that the pair $(m, u)$ satisfies the initial condition in the sense that for almost every $x,$ we have $\lim\limits_{t\to 0}u(x,t)=u_0(x)$ and $\lim\limits_{t\to 0}m(x,t)=\int_0^x \rho_0(\eta)\D{\eta}.$ Since $y_*(x,t)$ and $y^*(x,t)$ converges to $x$ as $t\to 0+,$ from the definition of $m(x,t)$ given by \eqref{exf1}, we get $\lim\limits_{t\to 0+}m(x,t)=\int_0^x v_0(\eta)\D{\eta}.$

We show the first assertion for any Lebesgue point $x_0$ of $u_0(x)$ and $v_0(x).$ To be more precise, we show that 
\begin{align}\label{initial condition}
    \lim_{t\to 0+}u(x_0,t)=u_0(x_0).
\end{align} When $y_*(x,t)<y^*(x,t),$ \eqref{initial condition} follows directly from the construction of $u(x,t)$. We only consider the case $y_*(x_0,t)=y^*(x_0,t).$ First we set
$T=\frac{(t+\upkappa)^2-\upkappa^2}{2\upkappa}.$ For any $\upepsilon >0,$ considering the inequality
\begin{align*}
    F(y_*(x_0,t), x_0, t)\leq F(y^*(x_0,t)+\upepsilon T, x_0, t),
\end{align*}
and calculating as above, we get 
\begin{align*}
    &\int_{y_*(x_0,t)}^{y^*(x_0,t)+\upepsilon T}\left[\frac{\eta}{2} \cdot \frac{(t+\upkappa)^2-\upkappa^2}{\upkappa (t+\upkappa)^2}+\frac{u_0(\eta)}{2}\cdot \frac{(t+\upkappa)^2+\upkappa^2}{(t+\upkappa)^2}\right]v_0(\eta)\D{\eta}\nonumber\\
    &\geq \int_{y_*(x_0,t)}^{y^*(x_0,t)+\upepsilon T}\left[\frac{(t+\upkappa)^2+\upkappa^2}{(t+\upkappa)((t+\upkappa)^2-\upkappa^2)}x_0-\eta \cdot \frac{2\upkappa}{(t+\upkappa)^2-\upkappa^2}\right]v_0(\eta)\D{\eta},
\end{align*}
similar to the inequality \eqref{eqq6.44}.
This implies
\begin{align}
    &\frac{\int_{y_*(x_0,t)}^{y^*(x_0,t)+\upepsilon T}\left[\left(\frac{\eta}{2\upkappa}+\frac{u_0(\eta)}{2}\right)-\left(\frac{\eta}{2}-\frac{u_0(\eta)\upkappa}{2}\right)\frac{\upkappa}{(t+\upkappa)^2}\right]v_0(\eta)\D{\eta}}{\int_{y_*(x_0,t)}^{y^*(x_0,t)+\upepsilon T} v_0(\eta)\D{\eta}}\nonumber\\
    &\geq \left[\frac{(t+\upkappa)^2+\upkappa^2}{(t+\upkappa)((t+\upkappa)^2-\upkappa^2)}x_0-y_*(x_0,t) \cdot \frac{2\upkappa}{(t+\upkappa)^2-\upkappa^2}-\upepsilon\right]=u(x_0,t)-\upepsilon.\label{eq6.42}
\end{align}
Since $x_0$ is a Lebesgue point of $u_0(x)$ and $v_0(x),$ passing to the limit as $t\to 0+$ in \eqref{eq6.42}, we conclude
\begin{align}\label{eq6.43}
    u_0(x_0) \geq \limsup_{t\to 0+} u(x_0,t)-\upepsilon.
\end{align}
Similarly, considering the inequality
   $F(y_*(x_0,t), x_0, t)\leq F(y_*(x_0,t)-\upepsilon T, x_0, t),$
we obtain 
\begin{align}\label{eq6.44}
    u_0(x_0) \leq \liminf_{t\to 0+}u(x_0,t)+\upepsilon.
\end{align}
Since $\upepsilon$ is arbitrary, combining \eqref{eq6.43}-\eqref{eq6.44}, we conclude \eqref{initial condition}.
\begin{remark}
Note that one could insert the following shadow wave solution to the system \eqref{second system}
\begin{align*}
U^{\varepsilon}=(v^{\varepsilon}, u^{\varepsilon})(x,t)=
\begin{cases}
\left(V_l (t), U_l(x,t)\right),\,\,\,\,\,\,\,\,\,\,\,\, &x< c(t)-\frac{\varepsilon}{2}t-x_{\varepsilon},\\
\left(v_{\varepsilon}(t), u_{\varepsilon}(t)\right),\,\,\,\,\,\, &c(t)-\frac{\varepsilon}{2}t-x_{\varepsilon}<x<c(t)+\frac{\varepsilon}{2}t+x_{\varepsilon},\\
\left(V_r(t), U_r(x,t)\right),\,\,\,\,\,\,\,\,\,\,\, &x>c(t)+\frac{\varepsilon}{2}t+x_{\varepsilon},
\end{cases}
\end{align*}
where $V_{l,r}(t)$ and $U_{l,r}(x,t)$ are given by
\begin{align*}
    U_{l,r}(x,t):= \frac{2u_{l,r}\left(\frac{\upkappa}{t+\upkappa}\right)}{\left(\frac{t+\upkappa}{\upkappa}+\frac{\upkappa}{t+\upkappa}\right)}+\frac{\left(\frac{1}{\upkappa}-\frac{\upkappa}{(t+\upkappa)^2}\right)}{\left(\frac{t+\upkappa}{\upkappa}+\frac{\upkappa}{t+\upkappa}\right)}x,~~
    V_{l,r}(t):=\frac{2 v_{l,r}}{\left(\frac{t+\upkappa}{\upkappa}+\frac{\upkappa}{t+\upkappa}\right)},
\end{align*}
and  $x_{\varepsilon}, v_{\varepsilon}(t)$ are $\mathcal{O}(\varepsilon)$ and $\mathcal{O}(1/\varepsilon)$, respectively. Following the similar calculations as in \cref{sec2}, we obtain the system of ODEs given as follows:
\begin{align}\label{second system of ODE}
&\frac{\D{\xi(t)}}{\D{t}}=\dot{c}(t)[V(t)]-[V(t)U(x,t)], \,\,\, \xi(0)=\bar{m},\nonumber \\
&\frac{\D{\left(\xi(t)\chi(t)\right)}}{\D{t}}+\frac{1}{t+\upkappa}\left(\chi(t)-\frac{x}{t+\upkappa}\right)\xi(t)=\dot{c}(t)[V(t)U(x,t)]-[V(t)U^2(x,t)],\,\,\, \xi(0)\chi(0)=\bar{m}\bar{u},\nonumber\\
&\dot{c}(t)=\chi(t).
\end{align}
where $\lim\limits_{\varepsilon \to 0}2\left(\frac{\varepsilon}{2}t+x_{\varepsilon}\right)v_{\varepsilon}(t)=\xi(t)$,  $\lim\limits_{\varepsilon \to 0}u_{\varepsilon}(t)=\chi(t)$ and $[\cdot]:=\cdot_r-\cdot_l$ denotes the jump across the discontinuity curve $x=c(t).$
Since the system of ODEs \eqref{second system of ODE} involves $x$ and $t$ both, it is not always straightforward to find an explicit expression for $\xi(t)$ and hence for $\chi(t)$ as we found in \cref{sec2}. Also, one of the advantages to the variational approach is that we can allow a larger set of initial datum, whereas to proceed with shadow wave tracking we need some regularity on the initial data. However, the variational approach requires the assumption $v_0(x)>0$ while  $v_0(x)=0$ case is allowed in shadow wave tracking.
\end{remark}
\subsection*{Acknowledgement} The authors gratefully acknowledge the comments and suggestions made by the anonymous referee to improve the manuscript.

\vspace{.5cm}
\noindent\textbf{Conflict of interest.} On behalf of all authors, the corresponding author states that there is no conflict of interest.

\vspace{.5cm}
\noindent\textbf{Data availability.} This manuscript has no associated data.

\end{document}